\newcommand{\bbZ}{\ensuremath{\mathbb{Z}}}
\newcommand{\bbQ}{\ensuremath{\mathbb{Q}}}
\newcommand{\bbN}{\ensuremath{\mathbb{N}}}
\newcommand{\bx}{\ensuremath{\boldsymbol{x}}}
\newcommand{\by}{\ensuremath{\boldsymbol{y}}}
\newcommand{\fa}{\ensuremath{\mathfrak a}}
\newcommand{\fb}{\ensuremath{\mathfrak b}}
\newcommand{\fc}{\ensuremath{\mathfrak c}}
\newcommand{\fp}{\ensuremath{\mathfrak p}}
\newcommand{\fq}{\ensuremath{\mathfrak q}}
\newcommand{\fr}{\ensuremath{\mathfrak r}}
\newcommand{\fs}{\ensuremath{\mathfrak s}}
\newcommand{\ft}{\ensuremath{\mathfrak t}}
\newcommand{\ta}{\ensuremath{\mathsf T}}
\newcommand{\eR}{\ensuremath{\varepsilon_R}}
\newcommand{\eS}{\ensuremath{\varepsilon_S}}
\newcommand{\iR}{\ensuremath{\iota_R}}
\newcommand{\iS}{\ensuremath{\iota_S}}
\DeclareMathOperator{\HH}{H}
\DeclareMathOperator{\Ker}{Ker}
\DeclareMathOperator{\Ima}{Im}
\DeclareMathOperator{\Ext}{Ext}
\DeclareMathOperator{\Tor}{Tor}
\DeclareMathOperator{\Hom}{Hom}
\DeclareMathOperator{\rank}{rank}
\DeclareMathOperator{\soc}{Soc}
\DeclareMathOperator{\depth}{depth}
\DeclareMathOperator{\edim}{edim}
\DeclareMathOperator{\gcl}{gcl}
\DeclareMathOperator{\codim}{codim}
\DeclareMathOperator{\type}{type}
\DeclareMathOperator{\length}{\ell}
\newcommand{\lra}{\ensuremath{\longrightarrow}}
\newcommand{\xra}{\ensuremath{\xrightarrow}}
\newcommand{\xla}{\ensuremath{\xleftarrow}}
\newcommand{\poinc}[2]{\ensuremath{P^{#1}_{#2}}}
\newcommand{\wt}{\widetilde}
\newcommand{\ov}{\overline}
\theoremstyle{plain}
\newtheorem{theorem}{Theorem}[section]
\newtheorem{proposition}[theorem]{Proposition}
\newtheorem{corollary}[theorem]{Corollary}
\newtheorem{lemma}[theorem]{Lemma}
\theoremstyle{definition}
\newtheorem{subsec}[theorem]{}
\newtheorem{example}[theorem]{Example}
\newtheorem{ssubsec}{}
\numberwithin{ssubsec}{theorem}
\theoremstyle{remark}
\newtheorem{remark}[theorem]{Remark}
\newtheorem*{Remark}{Remark}
\numberwithin{equation}{theorem}
\begin{document}

\title[Connected sums of rings]
      {Connected sums of Gorenstein local rings}

\author[H.~ Ananthnarayan]{H.~ Ananthnarayan}
\address{Department of Mathematics,
   University of Nebraska, Lincoln, NE 68588, U.S.A.}
     \email{ahariharan2@math.unl.edu}

\author[L.~L.~Avramov]{Luchezar L.~Avramov}
\address{Department of Mathematics,
   University of Nebraska, Lincoln, NE 68588, U.S.A.}
     \email{avramov@math.unl.edu}

\author[W.~F.~Moore]{W.~Frank Moore}
\address{Department of Mathematics,
   Cornell University, Ithaca, NY 14853, U.S.A.}
     \email{frankmoore@math.cornell.edu}

\date{\today}

\thanks{L.L.A.~was partly supported by NSF grant DMS 0803082.}

\subjclass[2000]{Primary 13D07, 13D40}

\dedicatory{To Gerson Levin, on his seventieth birthday.}

\keywords{Local ring, fiber product, connected sum, Gorenstein ring, 
Gorenstein colength, Poincar\'e series, Golod homomorphism, 
cohomology algebra, homotopy Lie algebra.}

  \begin{abstract}
Given surjective homomorphisms $R\to T\gets S$ of local rings, and
ideals in $R$ and $S$ that are isomorphic to some $T$-module $V$,
the \emph{connected sum} $R\#_TS$ is defined to be ring obtained by
factoring out the diagonal image of $V$ in the fiber product $R\times_TS$.
When $T$ is Cohen-Macaulay of dimension $d$ and $V$ is a canonical
module of $T$, it is proved that if $R$ and $S$ are Gorenstein of dimension
$d$, then so is $R\#_TS$.  This result is used to study how closely an
artinian ring can be approximated by a Gorenstein ring mapping onto~it.
When $T$ is regular, it is shown that $R\#_TS$ almost never is a complete
intersection ring.  The proof uses a presentation of the cohomology algebra
$\Ext^*_{R\#_kS}(k,k)$ as an amalgam of the algebras $\Ext^*_{R}(k,k)$
and $\Ext^*_{S}(k,k)$ over isomorphic polynomial subalgebras generated
by one element of degree $2$.
  \end{abstract}

\maketitle

\section*{Introduction}

We introduce, study, and apply a new construction of local Gorenstein
rings.

The starting point is the classical fiber product $R\times_TS$ of a
pair of surjective homomorphisms $\eR\colon R\to T\gets S\ {:}\,\eS$ of
local rings.  It is well known that this ring is local, but until recently, little 
was known about its properties.  In Proposition \ref{cmProd} we show that
if $R$, $S$, and $T$ are Cohen-Macaulay of dimension $d$, then so is
$R\times_TS$, but this ring is Gorenstein only in trivial cases.  When $\eR=\eS$, 
D'Anna \cite{D} and Shapiro \cite{Sh} proposed and partly proved a criterion for 
$R\times_TR$ to be Gorenstein.  We complete and strengthen their results in 
Theorem \ref{thm:danna}: $R\times_TR$ Is Gorenstein if and only if 
$R$ is Cohen-Macaulay and $\Ker\eR$ is a canonical module for $R$. 

Our main construction involves, in addition to the ring homomorphisms
$\eR$ and $\eS$, a $T$-module $V$ and homomorphisms $\iR\colon V\to R$ of $R$-modules
and $\iS\colon V\to S$ of $S$-modules, for the structures induced through
$\eR$ and $\eS$, respectively.  When these maps satisfy $\eR\iR=\eS\iS$,
we define a \emph{connected sum} ring by the formula
 \[
R\#_TS=(R\times_TS)/\{(\iR(v),\iS(v))\mid v\in V\}\,.
  \]

In case $R$, $S$, and $T$ have dimension $d$ (for some $d\ge0$), $R$
and $S$ are Gorenstein, $T$ is Cohen-Macaulay, and $V$ is a canonical
module for $T$, one can choose $\iR$ and $\iS$ to be isomorphisms
onto $(0:\Ker(\eR))$ and $(0:\Ker(\eS))$, respectively.  In Theorem
\ref{gorenstein} we prove that if $\eR\iR=\eS\iS$ holds, then $R\#_TS$
is Gorenstein of dimension $d$.

Much of the paper is concerned with Gorenstein rings of this form. 

As a first application, we study how efficiently an artinian local ring
can be approximated by a Gorenstein artinian ring mapping onto it. One
numerical measure of proximity is given by the Gorenstein colength of
an artinian ring, introduced in \cite{A1}. We obtain new estimates for
this invariant. We use them in the proof of Theorem \ref{gcl1} to remove
a restrictive hypothesis from a result of Huneke and Vraciu \cite{HV},
describing homomorphic images of Gorenstein local rings modulo their
socles.

When $d = 0$ and $T$ is a field, the construction of $R\#_T S$ mimics
the expression for the cohomology algebra of a connected sum $M\#N$ of
compact smooth manifolds $M$ and $N$, in terms of the cohomology algebras
of $M$ and $N$; see Example \ref{manifolds}. This analogy provides the
name and the notation for connected sums of rings.

The topological analogy also suggests that connected sums may be used
for classifying Cohen-Macaulay quotient rings of Gorenstein rings.
The corresponding classification problem is, in a heuristic sense, dual to
the one approached through Gorenstein linkage: Whereas linkage operates
on the set of \emph{Cohen-Macaulay quotients of a fixed Gorenstein ring}
$R$, connected sums operate on the set of \emph{Gorenstein rings with
a fixed Cohen-Macaulay quotient ring} $T$.

This point of view raises the question of identifying those rings $Q$
that are \emph{indecomposable}, in the sense that an isomorphism $Q\cong
R\#_TS$ implies $Q\cong R$ or $Q\cong S$.  In Theorem \ref{regular}
we show that if $T$ is regular and $Q$ is complete intersection,
then either $Q$ is indecomposable, or it is a connected sum of two
quadratic hypersurface rings.  The argument uses the structure of
the algebra $\Ext^*_{R\#_TS}(T,T)$, when $R$ and $S$ are artinian
and $T$ is a field.  In Theorem \ref{connected} we show that it is an
amalgam of $\Ext^*_{R}(T,T)$ and $\Ext^*_{S}(T,T)$ over a polynomial
$T$-subalgebra, generated by an element of degree $2$.  The machinery
for the proof is fine-tuned in Sections \ref{sec:Cohomology algebras}
and \ref{sec:Cohomology of fiber products}.

\section{Fiber products}
  \label{sec:Pdcts}

The \emph{fiber product} of a diagram of homomorphisms of commutative rings
 \begin{equation}
   \label{diagramProd}
  \begin{gathered}
\xymatrixrowsep{1pc}
\xymatrixcolsep{1pc}
\xymatrix{
R
\ar[dr]^{\eR}
\\
&{\quad}T{\quad}
  \\
S
\ar[ur]_{\eS}
}
  \end{gathered}
   \end{equation}
is the subring of $R\times S$, defined by the formula
  \begin{equation}
     \label{eq:Prod}
R\times_TS=\{(x,y)\in R\times S\mid \eR(x)=\eS(y)\}\,.
  \end{equation}

If $R\xla{\alpha_R}A\xra{\alpha_S}S$ are surjective homomorphisms 
of rings, then for $T=R\otimes_AS$, $\eR(r)=r\otimes1$, and $\eS(s)=1\otimes s$
the map $a\mapsto(\alpha_R(a),\alpha_S(a))$ is a
surjective homomorphism of rings $A\to R\times_TS$ with kernel
$\Ker(\alpha_R)\cap\Ker(\alpha_S)$, whence
   \begin{equation}
     \label{eq:presentation}
R\times_TS\cong A/(\Ker(\alpha_R)\cap\Ker(\alpha_S))\,.
  \end{equation}

\medskip

In the sequel, the phrase \emph{$(Q,\fq,k)$ is a local ring}
means that $Q$ is a commutative noetherian ring with unique 
maximal ideal $\fq$ and residue field $k=Q/\fq$. 

\medskip

\emph{The following setup and notation are in force for the rest of this section:}

  \begin{subsec}
    \label{setupProd}
The rings in diagram \eqref{diagramProd} are local: 
$(R,\fr,k)$, $(S,\fs,k)$, and $(T,\ft,k)$.

The maps $\eR$ and $\eS$ are surjective; set $I=\Ker(\eR)$ 
and $J=\Ker(\eS)$, and also
  \[
P=R\times_TS\,.
  \]

Let $\eta$ denote the inclusion of rings $P\to R\times S$, and let 
$R\gets R\times S\to S$ be the canonical maps. Each (finite) module 
over $R$ or $S$ acquires a canonical structure of (finite) $P$-module 
through the composed homomorphisms of rings $R\gets P\to S$
(finiteness is preserved because these maps are surjective). 

The rings and ideals above are related through exact sequences
of $P$-modules
  \begin{gather}
    \label{eq:fiber1}
0\lra I\oplus J\lra R\oplus S\xra{\,\eR\oplus\eS\,} T\oplus T\lra0
  \\
    \label{eq:fiber2}
0\lra R\times_TS\xra{\,\eta\,} R\oplus S\xra{\,(\eR,-\eS)\,} T\lra 0  
  \end{gather}

A length count in the second sequence yields the relation
  \begin{equation}
    \label{eq:lengthProd}
\length(R\times_TS)+\length(T) =\length(R)+\length(S)\,.
  \end{equation}
      \end{subsec}

For completeness, we include a proof of the following result; see
\cite[19.3.2.1]{Gr}.

  \begin{lemma}
 \label{local1}
The ring $R\times_TS$ is local, with maximal ideal $\fp=\fr\times_\ft\fs$.
  \end{lemma}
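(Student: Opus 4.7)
The plan is to verify three things: (a) that $\fp:=\fr\times_\ft\fs$ is an ideal of $P$; (b) that every element of $P\setminus\fp$ is a unit of $P$; and (c) that $P$ is noetherian. Together these yield the lemma.

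First, I would check (a) by direct computation. Additive closure is immediate from \eqref{eq:Prod}. For multiplicative absorption, if $(x,y)\in\fp$ and $(a,b)\in P$, then $ax\in\fr$, $by\in\fs$, and $\eR(ax)=\eR(a)\eR(x)=\eS(b)\eS(y)=\eS(by)$, so $(ax,by)\in\fp$.

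The main step is (b). Suppose $(x,y)\in P\setminus\fp$, so either $x\notin\fr$ or $y\notin\fs$; say $x\notin\fr$. Since $R$ is local, $x$ is a unit, so $\eR(x)$ is a unit in $T$. From the defining relation $\eS(y)=\eR(x)$ it follows that $\eS(y)\notin\ft$, which forces $y\notin\fs$ and hence $y$ to be a unit in $S$. Then $(x^{-1},y^{-1})\in R\times S$, and the identity $\eR(x^{-1})=\eR(x)^{-1}=\eS(y)^{-1}=\eS(y^{-1})$ shows that this element lies in $P$, providing the required inverse to $(x,y)$.

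For (c), the exact sequence \eqref{eq:fiber2} exhibits $R\oplus S$ as an extension of the cyclic $P$-module $T$ by $P$ itself, so $R\oplus S$ is finitely generated over $P$; since $R\oplus S$ is noetherian as a ring, Eakin's theorem forces $P$ to be noetherian. No real obstacle arises in any of the three steps; the one point worth highlighting is the symmetry in (b), which uses locality of both $R$ and $S$ to deduce $y\notin\fs$ from $x\notin\fr$, and once that observation is in place the rest is routine.
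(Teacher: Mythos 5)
Your proof is correct, and steps (a) and (b) follow essentially the paper's route, with (b) being in fact a little slicker: you observe directly that $\eS(y)=\eR(x)$ being a unit forces $y\notin\fs$ and hence $y$ invertible, so $(x^{-1},y^{-1})\in P$ immediately. The paper instead constructs $s'$ with $\eS(s')=\eR(r^{-1})$ by surjectivity of $\eS$ and obtains the inverse as $(r^{-1},a^{-1}s')$; this comes to the same thing, but your version makes the symmetry explicit and avoids the detour.

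The one place you genuinely diverge is in the noetherianity argument (c), where you invoke Eakin's theorem. That does work: $P\hookrightarrow R\times S$ is a ring extension, $R\times S$ is a noetherian ring, and the exact sequence \eqref{eq:fiber2} shows $R\oplus S$ is module-finite over $P$. But the paper's argument is more elementary and avoids Eakin entirely: since $R$ and $S$ are \emph{quotient rings} of $P$ (the projections $P\to R$ and $P\to S$ are surjective, using surjectivity of $\eS$ and $\eR$ respectively), they are noetherian as $P$-modules (their $P$-submodule lattices coincide with their ideal lattices); hence $R\oplus S$ is a noetherian $P$-module, and its submodule $P$ is noetherian. The key observation you did not exploit is that $R$ and $S$ are not merely noetherian rings but noetherian $P$-modules, which replaces the appeal to Eakin's theorem with the elementary fact that submodules of noetherian modules are noetherian. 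Both routes are valid; the paper's is the lighter one.
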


  \begin{proof} 
The rings $R$ and $S$ are quotients of $P$, so they are noetherian 
$P$-modules.  Thus, the $P$-module $R\oplus S$ is noetherian, and 
hence so is its submodule $P$.

If $(r,s)$ is in $P$, but not in $\fr\times_\ft\fs$, then $r$ is not in $\fr$, 
so $r$ is invertible in $R$.  Since $\eS$ is surjective, there exists $s'\in S$ 
with $\eS(s')=\eR(r^{-1})$. One then has $\eS(s's)=\eR(r^{-1})\eR(r)=1$, 
so $a=s's$ is an invertible element of $S$.  Now $(r^{-1},a^{-1}s')$
is in $P$, and it  satisfies $(r^{-1},a^{-1}s')(r,s)=(r^{-1}r,a^{-1}s's)=(1,1)$.
  \end{proof}

For any sequence $\bx$ of elements of $P$ and $P$-module $M$, we let 
$\HH_n(\bx,M)$ denote the $n$th homology module of the Koszul complex 
on $\bx$ with coefficients in $M$.  

  \begin{lemma}
 \label{local2}
When $\bx$ is a $T$-regular sequence in $R\times_TS$ and $\ov M$ 
denotes $M/\bx M$ for each $(R\times_TS)$-module $M$, there 
is an isomorphism of rings
  \[
\ov{R\times_TS}\cong\ov R\times_{\ov T}\ov S
  \]
and there are exact sequences of $(\ov{R\times_TS})$-modules
  \begin{gather}
    \label{eq:fiberOv1}
0\lra\ov I\oplus\ov J\lra\ov R\oplus\ov S\xra{\,\ov\eR\oplus\ov\eS\,}
\ov T\oplus\ov T\lra0
  \\
    \label{eq:fiberOv2}
0\lra\ov{R\times_TS}\xra{\ \ov\eta\ }\ov R\oplus\ov
S\xra{\,(\ov\eR,-\ov\eS)\,}\ov T\lra 0
  \qedhere \end{gather}

The sequence $\bx$ is $R\times_TS$-regular if and only if it is $R$-regular and 
$S$-regular.
  \end{lemma}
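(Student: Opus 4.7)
The plan is to derive all three assertions by applying Koszul homology $\HH_*(\bx,-)$ to the two short exact sequences \eqref{eq:fiber1} and \eqref{eq:fiber2} of $P$-modules recalled in \ref{setupProd}. The key input is that $T$-regularity of $\bx$ forces $\HH_n(\bx,T)=0$ and $\HH_n(\bx,T\oplus T)=0$ for every $n\geq1$. Here I tacitly use the standard fact that, because $\bx$ acts on an $R$-module $M$ through its image in $R$, the Koszul homology $\HH_*(\bx,M)$ computed over $P$ agrees with the one computed over $R$, and similarly for $S$ and $T$; this is what lets me transport regularity statements between the various rings.

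For \eqref{eq:fiberOv1}, the long exact sequence of Koszul homology associated to \eqref{eq:fiber1} has tail
\[
\HH_1(\bx,T\oplus T)\lra \HH_0(\bx,I\oplus J)\lra \HH_0(\bx,R\oplus S)\lra \HH_0(\bx,T\oplus T)\lra 0.
\]
The leftmost term vanishes, and since $\HH_0(\bx,M)=\ov M$, this is exactly \eqref{eq:fiberOv1}. Applying the same procedure to \eqref{eq:fiber2}, and using $\HH_1(\bx,T)=0$, yields \eqref{eq:fiberOv2}. The induced map $\ov P\to\ov R\oplus\ov S$ is the reduction of the ring inclusion $\eta$, hence is a ring homomorphism, and its image is the kernel of $(\ov\eR,-\ov\eS)$, which is $\ov R\times_{\ov T}\ov S$ by the defining formula \eqref{eq:Prod}. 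This yields the desired ring isomorphism $\ov P\cong\ov R\times_{\ov T}\ov S$.

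For the final biconditional, the long exact sequence from \eqref{eq:fiber2} at degree $n\geq 1$ reads
\[
\HH_{n+1}(\bx,T)\lra\HH_n(\bx,P)\lra\HH_n(\bx,R)\oplus\HH_n(\bx,S)\lra\HH_n(\bx,T).
\]
Both outer terms vanish by $T$-regularity, so the middle arrow is an isomorphism for all $n\geq1$. By the standard Koszul criterion identifying regular sequences on finite modules over a local ring with the vanishing of all positive Koszul homology, the condition $\HH_n(\bx,P)=0$ for $n\geq 1$ is therefore equivalent to $\HH_n(\bx,R)=0=\HH_n(\bx,S)$ for $n\geq1$, i.e., to $\bx$ being simultaneously $R$- and $S$-regular.

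The whole argument is driven by the two Koszul long exact sequences, and the only subtlety that merits care is the compatibility principle flagged in the first paragraph: identifying Koszul homology over $P$ of an $R$-, $S$-, or $T$-module with the corresponding Koszul homology computed natively over $R$, $S$, or $T$. Once this is in hand, everything reduces to routine bookkeeping in the long exact sequences.
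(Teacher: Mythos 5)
Your proposal is correct and follows essentially the same route as the paper: apply the long exact sequence of Koszul homology to \eqref{eq:fiber1} and \eqref{eq:fiber2}, using $\HH_n(\bx,T)=0$ for $n\ge1$. The only cosmetic difference is that the paper invokes only the $\HH_1$ isomorphism for the regularity biconditional (which suffices by the Koszul depth-sensitivity criterion), while you track all positive degrees; both are fine.
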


  \begin{proof}
One has $\HH_n(\bx,T)=0$ for $n\ge1$, so \eqref{eq:fiber1} induces an 
exact sequence of Koszul homology modules, which contains \eqref{eq:fiberOv1}.
It also gives an isomorphism 
  \begin{align*}
\HH_1(\bx,P)&\cong\HH_1(\bx,R)\oplus\HH_1(\bx,S)\,,
  \end{align*}
which shows that $\bx$ is $P$-regular if and only if it is $R$-regular and 
$S$-regular.  

The exact sequence of Koszul homology modules induced by \eqref{eq:fiber2}
contains the exact sequence \eqref{eq:fiberOv2}, which, in turn 
implies the desired isomorphism of rings.
  \end{proof}

We relate numerical invariants of $P$ to the corresponding 
ones of $R$, $S$, and~$T$.

  \begin{subsec}
    \label{invariants}
When $Q$ is a local ring and $N$ a finite $Q$-module, $\dim_QN$ denotes
its Krull dimension and $\depth_QN$ its depth of $N$.  Recall that if $P\to Q$ 
is a finite homomorphism of local rings, then one has
$\dim_PN=\dim_QN$ and $\depth_PN=\depth_QN$.

We set $\dim Q=\dim_QN$ and $\depth Q=\depth_QQ$;
thus, there are equalities $\dim Q=\dim_PQ$ and $\depth Q=\depth_PQ$.  

Recall that $\edim Q$ denotes the \emph{embedding dimension} of $Q$, defined 
to be the minimal number of generators of its maximal ideal.
  \end{subsec}
  
 \begin{lemma}
 \label{local3}
The following (in)equalities hold:
   \begin{align}
     \label{eq:local3.4}
\edim(R\times_TS)&\ge\edim R+\edim S-\edim T\,.  
  \\
     \label{eq:local3.1}
\dim(R\times_TS)&=\max\{\dim R\,,\dim S\}
\ge\min\{\dim R\,,\dim S\}\ge\dim T\,.
  \\
     \label{eq:local3.2}
\depth(R\times_TS)&\ge\min\{\depth R\,,\depth S\,,\,\depth T+1\}\,.
  \\
     \label{eq:local3.3}
\depth T&\ge\min\{\depth R, \depth S, \depth(R \times_T S) -1\}\,.
  \end{align}
    \end{lemma}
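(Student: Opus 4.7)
The plan is to derive all four (in)equalities from the short exact sequence of $P$-modules
\begin{equation*}
0 \lra P \xra{\,\eta\,} R \oplus S \xra{\,(\eR,-\eS)\,} T \lra 0,
\end{equation*}
recorded in \eqref{eq:fiber2}, together with the fact from \ref{invariants} that $\dim$ and $\depth$ over $P$ agree with those over any intermediate quotient ring. Since $R$ and $S$ are quotients of $P$, and $T$ is a quotient of either of them, this applies to every module in sight.

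For the Krull dimensions in \eqref{eq:local3.1}, the injection $\eta$ yields $\dim P = \dim_P P \le \dim_P(R \oplus S) = \max\{\dim R, \dim S\}$, while the quotient maps $P \to R$ and $P \to S$ give $\dim P \ge \dim R$ and $\dim P \ge \dim S$; the final chain $\min\{\dim R, \dim S\} \ge \dim T$ is immediate because $T$ is a quotient of both $R$ and $S$. For the embedding dimension \eqref{eq:local3.4}, I restrict the displayed exact sequence to maximal ideals. The surjectivity of $\eR\colon \fr \to \ft$ (true for any surjective local homomorphism) together with $\fp = \fr \times_\ft \fs$ from Lemma \ref{local1} shows that this restriction is again a short exact sequence
\begin{equation*}
0 \lra \fp \lra \fr \oplus \fs \lra \ft \lra 0
\end{equation*}
of $P$-modules. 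Tensoring with $k = P/\fp$ produces a right-exact sequence; because $P$ acts on $\fr$, $\fs$, and $\ft$ through its canonical surjections onto $R$, $S$, and $T$, one has $\fp\fr = \fr^2$, $\fp\fs = \fs^2$, and $\fp\ft = \ft^2$, so the tensored sequence reads
\begin{equation*}
\fp/\fp^2 \lra \fr/\fr^2 \oplus \fs/\fs^2 \lra \ft/\ft^2 \lra 0.
\end{equation*}
Counting $k$-dimensions delivers \eqref{eq:local3.4}.

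Finally, \eqref{eq:local3.2} and \eqref{eq:local3.3} are the two non-trivial depth inequalities that the Depth Lemma extracts from the first displayed sequence, applied at the outer terms, once one observes $\depth_P(R \oplus S) = \min\{\depth R, \depth S\}$. The only real subtlety in the whole argument is the identification of the $\fp$-action with the intrinsic maximal-ideal action, so that $\fp\fr = \fr^2$ and similarly for the other two; once that is in hand, everything else is formal manipulation of the short exact sequence \eqref{eq:fiber2}.
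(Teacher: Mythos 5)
Your proof is correct and follows essentially the same path as the paper's: the dimension equalities from the quotient maps and the injection $\eta$, the embedding dimension inequality from tensoring the exact sequence $0 \to \fp \to \fr \oplus \fs \to \ft \to 0$ with $k$ using $\fp\fr = \fr^2$, $\fp\fs = \fs^2$, $\fp\ft = \ft^2$, and the two depth inequalities from the Depth Lemma applied to \eqref{eq:fiber2}. The only difference is that you spell out a few steps (exactness of the restricted sequence, the $\depth_P(R\oplus S) = \min$ observation) that the paper leaves implicit.
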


  \begin{proof}
Lemma \ref{local1} gives an exact sequence of $P$-modules
  \[
0\to\fp\to\fr\oplus\fs\to\ft\to 0
  \]
Tensoring it with $P/\fp$ over $P$, we get an exact sequence 
of $k$-vector spaces 
  \[
\fp/\fp^2\to\fr/\fr^2\oplus\fs/\fs^2\to\ft/\ft^2\to 0
  \]
because we have $\fp\fr=\fr^2$, $\fp\fs=\fs^2$, and $\fp\ft=\ft^2$, due to
the surjective homomorphisms $R\gets P\to S\to T\gets R$.  These
maps also give $\min\{\dim R\,,\dim S\}\ge\dim T$ and 
$\dim P\ge\max\{\dim R\,,\dim S\}$, while the inclusion $\eta$ from
\eqref{eq:fiber2} yields 
  \[
\max\{\dim_PR\,,\dim_PS\}=\dim_P(R\oplus S)\ge\dim_P P\,.
  \]
For \eqref{eq:local3.2} and \eqref{eq:local3.3}, apply the 
Depth Lemma, see \cite[1.2.9]{BH}, to \eqref{eq:fiber2}.  
  \end{proof}

For a local ring $(Q,\fq,k)$ and $Q$-module $N$, set $\soc N=\{n\in N\mid \fq n=0\}$. 
When $\bx$ is a maximal $N$-regular sequence, $\rank_k\soc(N/\bx N)$ 
is a positive integer that does not depend on $\bx$, see \cite[1.2.19]{BH}, denoted $\type_QN$.
Set $\type Q=\type_QQ$; thus, $Q$ is Gorenstein if and only if it is Cohen-Macaulay and
$\type Q=1$. 

We interpolate a useful general observation that uses fiber producs.

  \begin{lemma}
    \label{lem:socle}
Let $(Q,\fq,k)$ be a local ring and $W$ a $k$-subspace of $(\soc(Q)+\fq^2)/\fq^2$.

There exists a ring isomorphism $Q\cong B\times_kC$, where $(B,\fb,k)$ 
and $(C,\fc,k)$ are local rings, such that $\fc^2=0$ and $\fc\cong W$. 

If $W=\soc(Q)+\fq^2)/\fq^2$, then $\soc(B)\subseteq\fb^2$.
  \end{lemma}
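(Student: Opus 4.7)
The plan is to choose inside $Q$ a $k$-subspace $V\subseteq\soc(Q)$ lifting $W$, pair it with a complementary $k$-subspace $V'\subseteq\fq$ containing $\fq^2$, and realize $Q$ as the fiber product over $k$ of the two quotients $B=Q/V$ and $C=Q/V'$.

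Because $\fq\cdot\soc(Q)=0$ and the $Q$-action on $\fq/\fq^2$ factors through $k$, every $k$-subspace of $\soc(Q)$ and every preimage of a $k$-subspace of $\fq/\fq^2$ is automatically an ideal of $Q$.  Lifting a $k$-basis of $W$ to elements of $\soc(Q)$ yields a $k$-subspace $V\subseteq\soc(Q)$ mapping isomorphically to $W$; in particular $V\cap\fq^2=0$.  Choosing a complement of the image of $V$ in $\fq/\fq^2$ and pulling back produces $V'\subseteq\fq$ with $\fq^2\subseteq V'$ and $\fq=V\oplus V'$.  Set $B:=Q/V$ and $C:=Q/V'$: both are local with residue field $k$, and the maximal ideal of $C$ satisfies $\fc=\fq/V'\cong V\cong W$ and $\fc^2=0$.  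The induced map $Q\to B\times_k C$ has kernel $V\cap V'=0$ and is surjective, because any compatible pair $(b,c)$ admits lifts $\tilde b,\tilde c\in Q$ with $\tilde b-\tilde c\in\fq=V\oplus V'$, say $\tilde b-\tilde c=v+v'$, and then $q:=\tilde b-v=\tilde c+v'$ reduces to $b$ and to $c$ in the two quotients.

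For the final claim I transport everything through the isomorphism. In $B\times_k C$ the maximal ideal is $\fb\oplus\fc$, so $\fq^2$ corresponds to $\fb^2\oplus 0$ (because $\fc^2=0$) and $\soc(Q)$ to $\soc(B)\oplus\fc$ (because $\soc(C)=\fc$), whence
\[
(\soc(Q)+\fq^2)/\fq^2\;\cong\;\bigl((\soc(B)+\fb^2)/\fb^2\bigr)\oplus\fc\,.
\]
When $W$ equals the full left-hand side, this $k$-vector space is in turn isomorphic to $\fc\cong V\cong W$; since $\soc(Q)$ is finite-dimensional over $k$ by noetherianness, a $k$-dimension count forces $(\soc(B)+\fb^2)/\fb^2=0$, i.e.\ $\soc(B)\subseteq\fb^2$.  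The main obstacle is verifying that $V'$ is really an ideal and not just a $k$-subspace; once one observes that the $Q$-module structure on $\fq/\fq^2$ descends to $k$, this is automatic, and the fiber-product identification itself is a routine verification.
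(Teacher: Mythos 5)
Your proof is correct and follows essentially the same route as the paper: both decompose $\fq$ as $V\oplus V'$ with $V$ a $k$-subspace of $\soc(Q)$ lifting $W$ and $V'$ an ideal whose image complements that of $V$ in $\fq/\fq^2$, and then realize $Q\cong(Q/V)\times_k(Q/V')$. (The paper's ideal $(\by)$ is the same as your $V'$, since $\fq^2=(\bx)\fq+(\by)\fq=(\by)\fq\subseteq(\by)$.) You simply spell out what the paper dismisses as ``elementary calculations,'' in particular the clean dimension-count argument for $\soc(B)\subseteq\fb^2$ via the identification $\soc(Q)\cong\soc(B)\oplus\fc$ and $\fq^2\cong\fb^2\oplus0$.
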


  \begin{proof}
When $\soc(Q)$ is in $\fq^2$, set $B=Q$ and $C=k$.  Else, pick in $\soc Q$ a set 
$\bx$ that maps bijectively to a basis of $W$, then choose in 
$\fq$ a set $\by\subset\fq$, so that $\bx\cup\by$ maps bijectively to a basis of 
$\fq/\fq^2$.  Set $B=Q/(\bx)$ and $C=Q/(\by)$.  One then has $\fq=(\bx)+(\by)$, 
hence $B\otimes_QC\cong k$, and also $(\bx)\cap(\by)=0$, so $Q\cong B\times_k
C$ by \eqref{eq:presentation}.  The desired properties of $B$ and $C$ are verified 
by elementary calculations.
  \end{proof}

The next two results concern ring-theoretic properties of fiber products.

 \begin{proposition}
 \label{cmProd}
Assume that $T$ is Cohen-Macaulay, and set $d=\dim T$.  

The ring $R\times_TS$ is Cohen-Macaulay of dimension $d$ if and only 
if $R$ and $S$ are.

When $R\times_TS$ is Cohen-Macaulay of dimension $d$ the following 
inequalities hold:
  \begin{align*}
\type R+\type S
&\ge\type (R\times_TS)
  \\
&\ge\max\{\type R+\type S-\type T,\type_RI+\type_SJ\} \,.
  \end{align*}
If, in addition, $I$ and $J$ are non-zero, then $R\times_TS$ is not
Gorenstein.
  \end{proposition}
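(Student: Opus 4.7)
The plan is to handle the three assertions separately, reducing to the artinian case for the last two. The CM equivalence is an immediate consequence of Lemma \ref{local3}. If $R$ and $S$ are CM of dimension $d$, then \eqref{eq:local3.1} gives $\dim P = d$ and \eqref{eq:local3.2} gives $\depth P \ge \min(d, d, d+1) = d$, so $P$ is CM of dimension $d$. Conversely, if $P$ is CM of dimension $d$, then \eqref{eq:local3.1} forces $\dim R = \dim S = d$, and the Depth Lemma applied to \eqref{eq:fiber2} yields $\depth R, \depth S \ge \min(\depth P, \depth T) = d$, so $R$ and $S$ are CM of dimension $d$.

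For the type inequalities, choose a sequence $\bx \subset \fp$ of length $d$ whose image is regular in each of $P, R, S, T$. Such a sequence exists by iterated prime avoidance, since $\fp$ surjects onto the maximal ideal of each CM ring of positive dimension and so misses every associated prime. By Lemma \ref{local2}, $\ov P = \ov R \times_{\ov T} \ov S$. Moreover $\depth_R I = d$ by the Depth Lemma on $0 \to I \to R \to T \to 0$, so $\bx$ is also $I$-regular and $\type_R I = \type_{\ov R} \ov I$; analogously for $J$. One may therefore assume $d = 0$ with all four rings artinian.

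Now apply $\soc = \Hom_{\ov P}(k, -)$ to \eqref{eq:fiberOv2}; the $\ov P$-socle agrees with the usual socle on each $\ov R, \ov S, \ov T$ since the actions factor through ring surjections. The resulting left exact sequence
\[
0 \to \soc \ov P \to \soc \ov R \oplus \soc \ov S \to \soc \ov T
\]
yields both the upper bound $\type \ov P \le \type \ov R + \type \ov S$ and the first lower bound $\type \ov P \ge \type \ov R + \type \ov S - \type \ov T$ by rank counting. For the second lower bound, observe that $I = \Ker(P \to S)$ and $J = \Ker(P \to R)$ are disjoint ideals of $P$, so $\ov I \oplus \ov J \hookrightarrow \ov P$ gives $\soc_{\ov R} \ov I \oplus \soc_{\ov S} \ov J \hookrightarrow \soc \ov P$, hence $\type \ov P \ge \type_{\ov R} \ov I + \type_{\ov S} \ov J$. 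Finally, if $I$ and $J$ are both nonzero, Nakayama (applied in $R$ and $S$) gives $\ov I, \ov J \ne 0$, and every nonzero artinian module has nonzero socle, so $\type_{\ov R} \ov I, \type_{\ov S} \ov J \ge 1$, whence $\type P = \type \ov P \ge 2$ and $P$ is not Gorenstein.

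The argument is not conceptually hard, but the main obstacle is the organizational bookkeeping of the reduction step: verifying that a single sequence $\bx$ is simultaneously regular on six modules ($P, R, S, T, I, J$) and that the types $\type_R I$ and $\type_S J$ are preserved under the reduction. The structural insight that powers the second lower bound is the disjointness of $I$ and $J$ as ideals of $P$ itself, so that their socles embed into $\soc P$ rather than merely into $\soc(R \oplus S)$.
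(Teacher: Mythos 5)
Your proof is correct and follows essentially the same strategy as the paper's: reduce to the artinian case via a simultaneously regular sequence, then count socle ranks using the exact sequences \eqref{eq:fiberOv1} and \eqref{eq:fiberOv2}. The one noticeable organizational difference is in how the second lower bound is obtained. The paper extracts $\ov\eta(\soc\ov P)\supseteq\soc\ov I\oplus\soc\ov J$ from the socle sequence attached to \eqref{eq:fiberOv2} together with the observation from \eqref{eq:fiberOv1} that $\ov\eR$ and $\ov\eS$ kill $\soc\ov I$ and $\soc\ov J$; you instead note that $I\times\{0\}$ and $\{0\}\times J$ are disjoint ideals of $P$ fitting into $0\to I\oplus J\to P\to T\to 0$, reduce this sequence modulo $\bx$, and embed $\soc_{\ov R}\ov I\oplus\soc_{\ov S}\ov J$ directly into $\soc\ov P$. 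Both routes land on the same embedding and require the same bookkeeping (that $\bx$ is $I$- and $J$-regular of maximal length, so $\type_R I=\rank_k\soc\ov I$), which you spell out more explicitly than the paper does via $\depth_R I=d$. Your treatment of the first assertion (CM equivalence) via \eqref{eq:local3.1}, \eqref{eq:local3.2}, and a Depth Lemma argument on \eqref{eq:fiber2} is also in line with the paper's terse appeal to Lemmas \ref{local2} and \ref{local3}. No gaps.
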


  \begin{proof}
The first assertion follows directly from Lemmas \ref{local2} an \ref{local3},
so assume that $P$ is Cohen-Macaulay of dimension $d$.  Choosing in $P$
an $(P\oplus T)$-regular sequence of length~$d$, from \eqref{eq:fiberOv2} we
get an exact sequence of $k$-vector spaces
  \[
0\lra\soc(\ov{P})\xra{\,\soc\ov\eta\,}\soc\ov R\oplus\soc\ov S
\xra{\,(\soc\ov\eR,-\soc\ov\eS)\,}\soc\ov T
  \]
It provides the inequalities involving $\type R$ and $\type S$.  Formula 
\eqref{eq:fiberOv1} gives $\ov\eR(\soc \ov I)=0= \ov\eS(\soc \ov
J)$, so the sequence above yields $\ov\eta(\soc\ov P)\supseteq\soc\ov
I\oplus\soc\ov J$.  When $I\ne0\ne J$ holds, we get $\ov I\ne0\ne\ov J$
by Nakayama's Lemma.  Since $\ov R$ and $\ov S$ are artinian, one has
$\soc\ov I\ne0\ne\soc\ov J$, whence $\type P\ge 2$.
  \end{proof}

When $\eR\colon R\to R/I$ is the canonical map and $\eS=\eR$, the ring 
$R \bowtie I=R \times_{R/I}R$ has been studied 
under the name \emph{amalgamated duplication of $R$ along $I$}.   
We complete and strengthen results of D'Anna and Shapiro:

  \begin{theorem}
    \label{thm:danna}
Let $R$ be a local ring, $d$ its Krull dimension, and $I$ a non-unit ideal.

The ring $R \bowtie I$ is Cohen-Macaulay if and only if $R$ is 
Cohen-Macaulay and $I$ is a maximal Cohen-Macaulay $R$-module.

The ring $R \bowtie I$ is Gorenstein if and only if $R$ is 
Cohen-Macaulay and $I$ is a canonical module for~$R$, and then
$R/I$ is Cohen-Macaulay with $\dim(R/I)=d-1$.
  \end{theorem}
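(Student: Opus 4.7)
The plan is to exploit the $R$-module decomposition $P := R\bowtie I \cong R\oplus I$ coming from the splitting of the first projection $\pi_1\colon P\to R$ by the diagonal $\Delta\colon R\to P$, $r\mapsto(r,r)$, with complement $\Ker(\pi_1)=\{(0,i):i\in I\}\cong I$.  Since $\Delta$ is a finite local homomorphism, depths and dimensions over $R$ and $P$ coincide.

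For the Cohen-Macaulay statement, Lemma~\ref{local3}\eqref{eq:local3.1} gives $\dim P=\dim R$, while the $R$-decomposition gives $\depth P=\min\{\depth R,\depth I\}$.  Hence $P$ is Cohen-Macaulay of dimension $d$ if and only if $\depth R=\depth I=d$, equivalently, $R$ is Cohen-Macaulay and $I$ is maximal Cohen-Macaulay.

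For the Gorenstein statement, the key is to compute $\omega_P$ together with its $P$-module structure.  When $R$ is Cohen-Macaulay, finiteness of $\Delta$ yields $\omega_P \cong \Hom_R(P,\omega_R) \cong \omega_R\oplus \Hom_R(I,\omega_R)$ as $R$-modules.  Unwinding the ring multiplication $(r,i)(s,j)=(rs,\,rj+si+ij)$ on $R\oplus I\cong P$, a direct computation shows that $(r_0,i_0)\in P$ acts on $(\alpha,\beta)\in\omega_R\oplus\Hom_R(I,\omega_R)$ by
\[
(r_0,i_0)\cdot(\alpha,\beta)\;=\;\bigl(r_0\alpha+\beta(i_0),\,(r_0+i_0)\beta\bigr).
\]
Since $I\subseteq\fr$, this yields
\[
\omega_P/\fp\omega_P \;\cong\; \omega_R/(\fr\omega_R+J)\;\oplus\;\Hom_R(I,\omega_R)/\fr\Hom_R(I,\omega_R),
\]
where $J:=\sum_{\beta}\beta(I)\subseteq\omega_R$.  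Thus $\type P$ equals the sum of the $k$-dimensions of these two summands.

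The criterion follows readily from this formula.  If $I\cong\omega_R$, then $\Hom_R(I,\omega_R)\cong R$ is cyclic and $J=\omega_R$, so $\type P=0+1=1$.  Conversely, if $P$ is Gorenstein, then $R$ is Cohen-Macaulay, $I$ is maximal Cohen-Macaulay, and $\type P=1$ forces $\Hom_R(I,\omega_R)$ to be cyclic, generated by some $\beta_0$, and $J=\omega_R$; the latter condition means $\beta_0\colon I\twoheadrightarrow\omega_R$ is surjective.  Localizing at any minimal prime $\fp$ of $R$, one has $I_\fp\subseteq R_\fp$ and $\length(\omega_{R_\fp})=\length(R_\fp)$, so the surjection $(\beta_0)_\fp$ forces $I_\fp=R_\fp$ and $(\beta_0)_\fp$ to be an isomorphism; since the associated primes of the maximal Cohen-Macaulay module $I$ lie among the minimal primes of $R$, this forces $\Ker\beta_0=0$, so $I\cong\omega_R$.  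The final assertion is immediate: a canonical ideal has rank $1$ and hence contains a nonzerodivisor, giving $\dim R/I\leq d-1$, while the Depth Lemma applied to $0\to I\to R\to R/I\to 0$ gives $\depth R/I\geq d-1$.  The principal obstacle in the proof is the careful unwinding of the $P$-action on $\omega_P$ needed to establish the formula for $\type P$; the artinian case $d=0$ is handled separately by direct inspection, since no proper $I$ can be isomorphic to $\omega_R$ when $R$ is artinian.
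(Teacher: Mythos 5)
Your treatment of the Cohen--Macaulay part coincides with the paper's (both exploit the $R$-module splitting $R\bowtie I\cong R\oplus I$), but your proof of the Gorenstein criterion takes a genuinely different and more self-contained route.  The paper reduces modulo a suitable regular sequence to the case $\dim R=1$, then invokes the results of D'Anna and Shapiro quoted in \ref{parts2}--\ref{parts4}; you instead compute $\omega_P\cong\Hom_R(P,\omega_R)\cong\omega_R\oplus\Hom_R(I,\omega_R)$ via the finite flat base-change formula, unwind the $P$-action, and read off $\type P=\mu_P(\omega_P)$ directly from the description of $\omega_P/\fp\omega_P$.  Your multiplication formula $(r_0,i_0)\cdot(\alpha,\beta)=(r_0\alpha+\beta(i_0),(r_0+i_0)\beta)$ is correct, the identification $\fp\omega_P=(\fr\omega_R+J)\oplus\fr\Hom_R(I,\omega_R)$ checks out, and the localization argument at minimal primes to upgrade the surjection $\beta_0\colon I\twoheadrightarrow\omega_R$ to an isomorphism is sound (using $\Ass I\subseteq\operatorname{Min}R$ for the MCM module $I$).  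What your approach buys is independence from \cite{D} and \cite{Sh}; what it costs is that you implicitly assume $R$ has a canonical module $\omega_R$, which a general Cohen--Macaulay local ring need not --- you should note explicitly that one may pass to the $\fr$-adic completion, using $\widehat{R\bowtie I}\cong\widehat R\bowtie\widehat I$ and the fact that being Gorenstein, being Cohen--Macaulay, and being a canonical module all descend from and ascend to the completion.  Both arguments share the degenerate issue at $I=0$ (where $R\bowtie 0\cong R$ can be Gorenstein though $0$ is not canonical), which the theorem as stated silently excludes; your remark on the artinian case is correct but, as you essentially note, your main computation already yields $\type P\ge 2$ there.
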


We start by listing those assertions in the theorem that are already known.

  \begin{subsec}
Assume that the ring $R$ is Cohen-Macaulay.
  \begin{ssubsec}
    \label{parts1}
If $I$ is a maximal Cohen-Macaulay module, then $R \bowtie I$ is 
Cohen-Macaulay: This is proved by D'Anna in \cite[Discussion 10]{D}.
  \end{ssubsec}
  \begin{ssubsec}
    \label{parts2}
If $I$ is a canonical module for $R$, then $R \bowtie I$ is Gorenstein:
This follows from a result of Eisenbud; see \cite[Theorem 12]{D}.
 \end{ssubsec}
  \begin{ssubsec}
    \label{parts3}
If $R \bowtie I$ is Gorenstein \emph{and $I$ contains a regular element}, 
then $I$ is a canonical module for $R$: In D'Anna's proof of \cite[Theorem 11]{D},
this is deduced from \cite[Proposition 3]{D}; the italicized part of the hypothesis does 
not appear in the statement of that proposition, but Shapiro 
\cite[2.1]{Sh} shows that it is needed.
 \end{ssubsec}
  \begin{ssubsec}
    \label{parts4}
If $R \bowtie I$ is Gorenstein and $\dim R=1$, then $I$ contains a regular 
element:  This is proved by Shapiro, see \cite[2.4]{Sh}; in the statement  of that 
result it is also assumed that $R$ reduced, but this hypothesis is not used in the proof.
 \end{ssubsec}
  \end{subsec}

\begin{proof}[Proof of Theorem \emph{\ref{thm:danna}}]
Set $P = R \bowtie I$ and $d=\dim R$; thus, $\dim P=d$ by \eqref{eq:local3.1}.

We obtain the first assertion from a slight variation of the argument for~\ref{parts1}.
The map $R\to R\times R$, given by $r\mapsto(r,r)$, defines a homomorphisms of 
rings $R\to P$ that turns $P$ into a finite $R$-module.  Thus, $P$ is a Cohen-Macaulay 
ring if and only if it is Cohen-Macaulay as an $R$-module; see \ref{invariants}.  This 
module is isomorphic to $R\oplus I$, because each element $(r,s)\in P$ has a unique 
expression of the form $(r,r)+(0,s-r)$.  It follows that $P$ is Cohen-Macaulay if and 
only if $R$ is Cohen-Macaulay and $I$ is a maximal Cohen-Macaulay $R$-module.

In view of \ref{parts2}, for the rest of the proof we may assume $P$ Gorenstein.

Set $T= R/I$.  We have $\depth T \geq d - 1\ge0$ by \eqref{eq:local3.3} and 
Proposition~\ref{cmProd}. By the already proved assertion, $R$ is 
Cohen-Macaulay with $\depth R=d$, so we can choose in $P$ a $T$-regular and $R$-regular
sequence  $\bx$ of length $d-1 $; for each $P$-module $M$ set $\ov M=M/\bx M$. 
By Lemma~\ref{local2}, $\bx$ is $P$-regular, $\ov I$ is an ideal in $\ov R$
and there are isomorphisms of rings $\ov T\cong \ov R/\ov I$ and 
$\ov{P}\cong \ov R \bowtie \ov I$.  As $\ov R$ is Cohen-Macaulay with $\dim \ov R=1$
and $\ov P$ is Gorenstein, \ref{parts4} shows that $\ov I$ contains an 
$\ov R$-regular element.  This yield $\dim \ov T = 0$, hence  $\dim T = d - 1$, so $T$ 
is Cohen-Macaulay.  Since $R$ is Cohen-Macaulay as well, we have 
$\operatorname{grade}_RT=\dim R-\dim T=1$, so $I$ contains a regular element, 
and hence $I$ is a canonical module for $R$, due to \ref{parts3}.
  \end{proof}

\section{Connected sums}
  \label{sec:ConnSum}

\setcounter{equation}{0}

A \emph{connected sum diagram} of commutative rings is a commutative 
diagram
  \begin{equation}
    \label{diagramSum}
  \begin{gathered}
\xymatrixrowsep{1pc}
\xymatrixcolsep{2pc}
 \xymatrix{
& R \ar[dr]^{\eR}
 \\
V \ar[ur]^{\iR} \ar[dr]_{\iS}
&& T
 \\
& S \ar[ur]_{\eS}
}
   \end{gathered}
  \end{equation}
where $V$ is a $T$-module, $\iR$ a homomorphism of $R$-modules
(with $R$ acting on $V$ through $\eR$) and $\iS$ a homomorphism 
of $S$-modules (with $S$ acting on $V$ via $\eS$).  

Evidently, $\{(\iR(v),\iS(v))\in R\times S\mid v\in V\}$ is an ideal of 
$R\times_TS$.  We define the \emph{connected sum of $R$ and $S$
along the diagram \eqref{diagramSum}} to be the ring
  \begin{equation}
    \label{eq:Sum}
R\#_TS=(R\times_TS)/\{(\iR(v),\iS(v))\mid v\in V\}\,.
  \end{equation}
As in the case of fiber products, the maps in the diagram are suppressed
from the notation, although the resulting ring does depend on them; see 
Example \ref{fermat}.  The choices of name and notation are explained 
in Example \ref{manifolds}.  

\medskip

\emph{We fix the setup and notation for this section as follows:}

  \begin{subsec}
 \label{setupSum}
The rings in diagram \eqref{diagramSum} are local: $(R,\fr,k)$, $(S,\fs,k)$ 
and $(T,\ft,k)$.  

The maps $\eR$ and $\eS$ are surjective; set $I=\Ker(\eR)$, $J=\Ker(\eS)$, 
also
  \[
P=R\times_TS \quad\text{and}\quad Q=R\#_TS\,.
  \]

The maps $\iR$ and $\iS$ are injective, so there are exact sequences 
of finite $P$-modules 
  \begin{gather}
    \label{eq:injection1}
0\lra V\oplus V\xra{\,{\iR}\oplus{\iS}\,} R\oplus S\lra R/\iR(V)\oplus
S/\iS(V)\lra0
  \\
    \label{eq:injection2}
0\lra V\xra{\,{\iota}\,} R\times_TS\xra{\,{\kappa}\,}R\#_TS\lra 0
 \end{gather}
where $\iota\colon v\mapsto(\iR(v),\iS(v))$ and $\kappa$ is the canonical surjection.

A length count in \eqref{eq:injection2}, using formula \eqref{eq:lengthProd}, yields
  \begin{equation}
    \label{eq:lengthSum}
\length(R\#_TS)+\length(T)+\length(V)=\length(R)+\length(S) \,.
 \end{equation}
  \end{subsec}

  \begin{subsec}
 \label{trivial}
The ring $Q$ is local and we write $(Q,\fq,k)$, unless $Q=0$.  The condition 
$Q=0$ is equivalent to $\iR(V)=R$, and also to $\iS(V)=S$:  This follows
from the fact that $(P,\fp,k)$ is a local ring with $\fp=\fr\times_T\fs$, see 
Lemma \ref{local1}.

When $I=0$ one has $R\times_TS\cong S$, hence $R\#_TS\cong S/\iS(V)$.   
  \end{subsec}

    \begin{lemma}
 \label{regularSum}
If a sequence $\bx$ in $R\times_TS$ is regular on 
$R/\iR(V)$, $S/\iS(V)$, $T$, and~$V$, then it is also
regular on $R$, $S$, $R\times_TS$, and $R\#_TS$, 
and there is an isomorphism
  \[
\ov{R\#_TS}\cong \ov R\#_{\ov T}\ov S
  \]
of rings, where $\ov M$ denotes $M/\bx M$ for every $R\times_TS$-module $M$.
  \end{lemma}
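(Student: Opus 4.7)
The plan is to bootstrap the conclusion from the hypotheses by two applications of the Koszul homology long exact sequence, sandwiched around Lemma~\ref{local2}. First I would derive $\bx$-regularity on $R$ and $S$ from the short exact sequences of $P$-modules
  \[
0\lra V\xra{\,\iR\,}R\lra R/\iR(V)\lra 0
\quad\text{and}\quad
0\lra V\xra{\,\iS\,}S\lra S/\iS(V)\lra 0\,.
  \]
The hypotheses give $\bx$-regularity on $V$, $R/\iR(V)$, and $S/\iS(V)$, so all $\HH_n(\bx,-)$ with $n\ge 1$ vanish on these modules; the induced long exact sequences then force $\HH_n(\bx,R)=0=\HH_n(\bx,S)$ for $n\ge 1$. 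Since $\bx$ lies in the maximal ideal $\fp$ of the local ring $P$, this yields $\bx$-regularity on $R$ and $S$. Combined with the hypothesis on $T$, Lemma~\ref{local2} then gives that $\bx$ is $P$-regular and produces the ring isomorphism $\ov P\cong \ov R\times_{\ov T}\ov S$.

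Next I would run the same argument on the defining short exact sequence
  \[
0\lra V\xra{\,\iota\,}P\xra{\,\kappa\,}Q\lra 0
  \]
from \eqref{eq:injection2}, now using $\bx$-regularity on $V$ and on $P$: the long exact sequence forces $\HH_n(\bx,Q)=0$ for $n\ge 1$, so $\bx$ is $Q$-regular, and the tail of the long exact sequence collapses to a short exact sequence of $\ov P$-modules
  \[
0\lra\ov V\xra{\,\ov\iota\,}\ov P\xra{\,\ov\kappa\,}\ov Q\lra 0\,.
  \]

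The remaining step is to identify, under the isomorphism $\ov P\cong\ov R\times_{\ov T}\ov S$ of Lemma~\ref{local2}, the image of $\ov\iota$ with the diagonal $\{(\ov\iR(\ov v),\ov\iS(\ov v))\mid \ov v\in\ov V\}$. This is a straightforward chase: that isomorphism is induced by reducing the componentwise inclusion $\eta$ mod $\bx$, and $\ov\iota$ is induced by $v\mapsto(\iR(v),\iS(v))$, so the induced maps $\ov\iR\colon\ov V\to\ov R$ and $\ov\iS\colon\ov V\to\ov S$ remain injective (by $\bx$-regularity on $V$, $R$, $S$) and still satisfy $\ov\eR\ov\iR=\ov\eS\ov\iS$, giving a connected sum diagram over $\ov T$. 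Quotienting $\ov P$ by this diagonal image produces $\ov R\#_{\ov T}\ov S$, which is thus isomorphic to $\ov Q$. The main obstacle is really only the bookkeeping in this last identification; the regularity claims are formal consequences of the Koszul machinery and Lemma~\ref{local2}.
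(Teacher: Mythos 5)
Your overall strategy is the same as the paper's: use the Koszul long exact sequence on the two sides, sandwich Lemma~\ref{local2} in the middle, and then run the Koszul LES once more on \eqref{eq:injection2} (the paper uses the single sequence \eqref{eq:injection1} where you use two separate sequences $0\to V\to R\to R/\iR(V)\to0$ and its $S$-analogue, but that is an inessential repackaging).

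However, there is a genuine slip in the middle step. You claim that $\bx$-regularity on $V$ and on $P$ alone ``forces $\HH_n(\bx,Q)=0$ for $n\ge1$.'' That is true for $n\ge2$, but at $n=1$ the long exact sequence attached to $0\to V\xra{\iota}P\to Q\to0$ reads
\[
0=\HH_1(\bx,P)\lra\HH_1(\bx,Q)\lra\ov V\xra{\ \ov\iota\ }\ov P\,,
\]
so $\HH_1(\bx,Q)=\Ker(\ov\iota)$, which does not vanish for free. You must \emph{first} establish that $\ov\iota$ is injective and only then conclude $Q$-regularity. You do eventually address the injectivity of $\ov\iota$, but you present it as part of the ``bookkeeping'' for identifying the image after regularity is already known, and you attribute the injectivity of $\ov\iR$ and $\ov\iS$ to ``$\bx$-regularity on $V$, $R$, $S$.'' That attribution is incorrect: the relevant input is the vanishing $\HH_1(\bx,R/\iR(V))=0$ and $\HH_1(\bx,S/\iS(V))=0$ (i.e., regularity on $R/\iR(V)$ and $S/\iS(V)$), which makes the connecting maps into $\ov V$ vanish and hence makes $\ov\iR$, $\ov\iS$ injective; regularity on $V$ and $R$ alone would not give you $\iR(V)\cap\bx R\subseteq\bx\,\iR(V)$. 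The paper gets this right by factoring $\ov\iota$ as the diagonal $\ov V\to\ov V\oplus\ov V$ followed by $\ov\iR\oplus\ov\iS$, with the injectivity of the latter read off from the Koszul sequence attached to \eqref{eq:injection1}, \emph{before} concluding $\HH_1(\bx,Q)=0$. Reordering your argument and correcting the cited hypotheses would make it complete.
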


  \begin{proof}
The sequence \eqref{eq:injection1} induces an exact sequence of Koszul
homology modules
  \begin{equation}
    \label{eq:injection}
0\lra\HH_1(\bx, R)\oplus\HH_1(\bx, S)\lra0 \lra\ov V\oplus \ov
V\xra{\,\ov{\iR}\oplus\ov{\iS}\,}\ov R\oplus \ov S
  \end{equation}
It follows that $\bx$ is $R$-regular and $S$-regular.  Lemma \ref{local2}
shows that it is also $P$-regular, so \eqref{eq:injection2} induces an
exact sequence of Koszul homology modules
  \[
0\lra\HH_1(\bx,Q)\lra\ov V\xra{\ {\ov\iota}\ }\ov P\xra{\ \ov\kappa\ }
\ov Q\lra 0
  \]
Note that $\ov\iota$ is  equal to the composition of the diagonal
map $\ov V\to\ov V\oplus\ov V$ and $\ov{\iR}\oplus\ov{\iS}\colon \ov
V\oplus\ov V\to\ov R\oplus \ov S$.  Both are injective, the second one
by \eqref{eq:injection}, so $\ov\iota$ is injective as well.   We get
$\HH_1(\bx,Q)=0$, so $\bx$ is $Q$-regular.  After identifying $\ov P$
and $\ov R\times_{\ov T}\ov S$ through Lemma \ref{local2}, we get $\ov
Q\cong\ov R\#_{\ov T}\ov S$ from the injectivity of $\ov\iota$.
  \end{proof}

  \begin{proposition}
 \label{cmSum}
If the rings, $R/\iR(V)$, $S/\iS(V)$, $T$, and the $T$-module
$V$ are Cohen-Macaulay of dimension $d$, then so are the
rings $R$, $S$, $R\times_TS$, and $R\#_TS$. 
  \end{proposition}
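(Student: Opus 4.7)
The plan is to handle $R$ and $S$ directly from the injections $\iR$ and $\iS$, then appeal to Proposition \ref{cmProd} for $P = R \times_T S$, and finally reduce the case of $Q = R \#_T S$ to an artinian situation via Lemma \ref{regularSum}.

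The injectivity of $\iR$ produces an exact sequence of $R$-modules
\[
0 \lra V \xra{\iR} R \lra R/\iR(V) \lra 0.
\]
Since the outer terms are Cohen-Macaulay of dimension $d$ as modules over $T$ and $R/\iR(V)$ respectively, they are also so as $R$-modules by \ref{invariants}. The usual formula for the dimension of an extension gives $\dim R = \max(\dim V, \dim R/\iR(V)) = d$, while the depth lemma gives $\depth R \ge \min(\depth V, \depth R/\iR(V)) = d$; hence $R$ is Cohen-Macaulay of dimension $d$, and analogously for $S$. Proposition \ref{cmProd} then yields that $P$ is Cohen-Macaulay of dimension $d$.

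For $Q$, I would construct a sequence $\bx = x_1, \dots, x_d$ in $\fp$ that is simultaneously regular on each of the four $P$-modules $R/\iR(V)$, $S/\iS(V)$, $T$, $V$; each has $P$-depth $d$ by \ref{invariants}. Such a sequence is built by iterated prime avoidance inside $\fp$: at step $i$, each of the four modules modulo $(x_1,\dots,x_{i-1})$ still has positive depth $d-i+1$ over $P$, so the union of their $P$-associated primes is strictly contained in $\fp$, and any $x_i$ in the complement of this finite family works. Lemma \ref{regularSum} then says $\bx$ is also $R$-, $S$-, $P$-, and $Q$-regular, whence $\depth Q \ge d$. Because $R/\iR(V) \ne 0$, the trivial vanishing case in \ref{trivial} is excluded and $Q$ is local; combined with $\dim Q \le \dim P = d$, this forces $\dim Q = \depth Q = d$, so $Q$ is Cohen-Macaulay of dimension $d$.

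The main obstacle is the construction of the simultaneous regular sequence: the four target modules a priori live over different quotient rings of $P$, which might threaten the existence of a single element of $\fp$ regular on all of them at once. Viewing them uniformly as finite $P$-modules and computing depth over $P$ collapses the question into one instance of prime avoidance in the local ring $P$, which is routine, and Lemma \ref{regularSum} then transports the result back to $R$, $S$, $P$, and $Q$ all at once.
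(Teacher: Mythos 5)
Your argument is correct and follows essentially the same route as the paper: use the short exact sequence(s) coming from the injectivity of $\iR$ and $\iS$ (the paper packages them as \eqref{eq:injection1}) to see that $R$ and $S$ are Cohen--Macaulay of dimension $d$, invoke Proposition~\ref{cmProd} for $P = R\times_TS$, and then pick a length-$d$ sequence in $\fp$ regular on $R/\iR(V)\oplus S/\iS(V)\oplus T\oplus V$ so that Lemma~\ref{regularSum} transports regularity to $Q$. The extra detail you give on the prime-avoidance construction of the simultaneous regular sequence is a fuller version of a step the paper leaves implicit, but the argument is the same.
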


  \begin{proof}
The exact sequence \eqref{eq:injection1} implies that $R$ and $S$ are
Cohen-Macaulay of dimension $d$.  Proposition \ref{cmProd} then shows
that so is $P$; this gives $\dim Q\le d$.  Let $\bx$ be a sequence of
length $d$ in $P$, which is regular on $(R/\iR(V)\oplus S/\iS(V)\oplus
T\oplus V)$.  By Lemma \ref{regularSum}, it is also $Q$-regular, so $Q$
is Cohen-Macaulay of dimension $d$.
  \end{proof}

To describe those situations, where connected sums do not produce new
rings, we review basic properties of Hilbert-Samuel multiplicities.

  \begin{subsec}
 \label{lem:multiplicity}
Let $(P,\fp,k)$ be a Cohen-Macaulay local ring of dimension $d$.  

When $k$ is infinite, the \emph{multiplicity} $e(P)$ can be expressed as
  \[
e(P)=\inf\{\length(P/\bx P)\mid \bx\text{ is a $P$-regular sequence
in }P\}\,;
  \]
see \cite[4.7.11]{BH}.  If $P\to P'$ is a surjective homomorphism of rings,
and $P'$ is Cohen-Macaulay of dimension $d$, then by \cite[Ch.\,1,
3.3]{Sl} there exists in $P$ a sequence $\bx$ that is both $P$-regular
and $P'$-regular, and $e(P')=\length(P'/\bx P')$ holds.

When $k$ is finite, one has 
$e_P(M)=e_{P[y]_{\fp[y]}}\big(M\otimes_P P[y]_{\fp[y]}\big)$.

The ring $P$ is regular if and only if if $e(P)=1$.

It is a quadratic hypersurface if and only if $e(P)=2$.
  \end{subsec}

  \begin{proposition}
 \label{multiplicity}
Assume that the rings $R/\iR(V)$, $S/\iS(V)$, and $T$, and the $T$-module
$V$, are Cohen-Macaulay, and their dimensions are equal. 

When $R$ is regular one has $I=0$ and $R\#_TS\cong S/\iS(V)$.

When $R$ is a quadratic hypersurface and $I\ne0$, one has $R\#_TS\cong S$.
  \end{proposition}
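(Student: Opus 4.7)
The plan is to handle the two cases separately, using Proposition \ref{cmSum}, the additivity of Hilbert-Samuel multiplicities along short exact sequences of Cohen-Macaulay modules of maximal dimension, and the observations in \ref{trivial}.

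\emph{Case 1.} By Proposition \ref{cmSum}, $R$ has dimension $d$; being regular, $R$ is a domain. Any nonzero ideal $I$ in a domain of dimension $d$ satisfies $\dim R/I\le d-1$, contradicting the hypothesis $\dim T=d$. Hence $I=0$, and the conclusion $R\#_T S\cong S/\iS(V)$ is exactly the content of \ref{trivial}.

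\emph{Case 2.} Proposition \ref{cmSum} makes $R$, $S$, $P=R\times_TS$, and $Q=R\#_TS$ Cohen-Macaulay of dimension $d$. Consider the short exact sequences
\[
0\to\iR(V)\to R\to R/\iR(V)\to 0 \quad\text{and}\quad 0\to I\to R\to T\to 0,
\]
both of Cohen-Macaulay $R$-modules of dimension $d$; here $\dim I=d$ because $I\ne 0$ and $R$ has pure dimension $d$. Using $\iR(V)\cong V$ and $e_R(V)=e_T(V)$, additivity of multiplicity gives
\[
e_T(V)+e(R/\iR(V))=e(R)=2=e_R(I)+e(T).
\]
All four summands are positive integers, so each equals $1$. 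By \ref{lem:multiplicity}, $T$ is regular; hence $V$—a Cohen-Macaulay $T$-module of dimension $d$ with multiplicity $1$ over a regular $T$—is free of rank one, i.e., $V\cong T$. Consequently $\iota(V)\subseteq P$ is the principal ideal generated by $(v_0,w_0)$, where $v_0=\iR(1_T)$ and $w_0=\iS(1_T)$, and an analogous additivity count on \eqref{eq:injection2} and \eqref{eq:fiber2} yields $e(Q)=e(R)+e(S)-e(T)-e_T(V)=e(S)$.

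To finish Case 2, the plan is to exhibit a surjective ring homomorphism $\phi\colon P\to S$ with $\ker\phi=\iota(V)$, so that $Q\cong S$ follows at once. I take $\phi$ of the form $\phi(r,s)=s-\beta(r)\,w_0$ for a suitable map $\beta\colon R\to T$. The kernel condition dictates $\beta|_{\iR(V)}=-\eR$, which is well defined because $(0:_R v_0)=I$, while multiplicativity of $\phi$, using $w_0\in(0:_S J)$, reduces to a Leibniz-type identity
\[
\beta(r_1r_2)=\eR(r_1)\beta(r_2)+\beta(r_1)\eR(r_2)-t_0\,\beta(r_1)\beta(r_2),\qquad t_0=\eR(v_0)=\eS(w_0).
\]
Existence of such a $\beta$ uses the structural presentation of $R$: writing $R\cong A/(ug)$ with $A$ regular and $I=(u)R$, the desired $\beta$ comes from the ``$g$-coefficient'' map on $R$.

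The principal obstacle is constructing $\beta$ globally outside the artinian case. The cleanest route is to reduce to the artinian situation via Lemma \ref{regularSum}, by killing a length-$d$ regular sequence on $R/\iR(V)$, $S/\iS(V)$, $T$, and $V$; in the resulting setup $\ov R\cong k[\epsilon]/(\epsilon^2)$ and $\ov T=\ov V=k$, so that the explicit formula $\phi(a+b\epsilon,a+n)=a+n-b w_0$ can be checked directly, and the already-established identity $e(Q)=e(S)$ ensures that the induced surjection $\ov Q\to\ov S$ lifts to an isomorphism $Q\cong S$.
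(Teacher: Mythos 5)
Your Case~1 is correct and matches the paper's argument: $R$ regular forces $R$ to be a domain, and since $\dim T=\dim R$ is imposed, $I=0$, after which \ref{trivial} gives the conclusion.

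Your Case~2 takes a genuinely different route to the numerical identity. The paper reduces immediately to the artinian situation via a $P$- and $R$-regular sequence $\bx$ with $\length(\ov R)=e(R)=2$, and then reads off $\length(\ov T)=1=\length(\ov V)$ from \eqref{eq:fiberOv1}, concluding $\length(\ov Q)=\length(\ov S)$ from Lemma~\ref{regularSum} together with \eqref{eq:lengthSum}. You instead use additivity of Hilbert--Samuel multiplicity along the exact sequences $0\to V\to R\to R/\iR(V)\to0$ and $0\to I\to R\to T\to0$ (both of Cohen--Macaulay $R$-modules of dimension $d$, with $I$ Cohen--Macaulay by the Depth Lemma) to force $e(T)=e_R(V)=e(R/\iR(V))=e_R(I)=1$, hence $T$ regular and $V\cong T$, and then $e(Q)=e(S)$ from \eqref{eq:injection2} and \eqref{eq:fiber2}. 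This is a correct and in some ways more informative computation than the paper's; in particular it isolates that $\iota(V)$ is a principal ideal of $R\times_TS$ under the hypotheses, which the paper's length count does not make explicit.

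The genuine gap is at the final step, which is the same step the paper handles in one clause ("Setting $K=\Ker(Q\to S)\dots$"). You need a surjective ring homomorphism from $R\times_TS$ (or from $Q$) onto $S$ with kernel exactly $\iota(V)$; once you have it, Nakayama's Lemma (applied after reducing by the regular sequence $\bx$, where the lengths agree) finishes the proof, exactly as in the paper. Your ansatz $\phi(r,s)=s-\beta(r)\,w_0$ is the right shape, and you correctly identify the constraints on $\beta$ (the normalisation $\beta\iR=\mathrm{id}_V$ under $V\cong T$, plus the twisted Leibniz rule). But you never establish that such a $\beta$ exists: the appeal to writing $R\cong A/(ug)$ and a ``$g$-coefficient map'' is not a construction, and it is unclear how it handles the case where $t_0=\eR\iR(1)\neq0$. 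The fallback you propose -- reduce to the artinian case, check the explicit formula there, and ``lift'' using $e(Q)=e(S)$ -- does not work as stated: an abstract ring isomorphism $\ov Q\cong\ov S$ after killing a regular sequence does not lift to an isomorphism $Q\cong S$. What does lift is a globally defined map whose reduction is an isomorphism; this is precisely what the paper uses and precisely what you have not produced. To repair the argument you must either construct $\phi$ globally (for instance, by first proving $\iR(V)=I$ when $\eR\iR=0$, which follows from $\ov{\iR(V)}=\ov I$ plus $\Tor_1^R(T,R/\bx R)=0$, and handling the remaining case), or find another way to manufacture the surjection $Q\to S$ before reducing modulo $\bx$.
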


  \begin{proof}
Set $d=\dim T$.  By Proposition \ref{cmSum}, $P$, $Q$, $R$, and $S$ are
Cohen-Macaulay of dimension $d$.  Thus, every $P$-regular 
sequence is also regular on $Q$, $R$, and $S$.

When $R$ is regular it is a domain; $\dim R=\dim T$ implies 
$I=0$, so \ref{trivial} applies.

Assume $I\ne0$ and $e(R)=2$.  Tensoring, if necessary, the diagram
\eqref{diagramSum} with $P[x]_{\fp[x]}$ over $P$, we may assume that $k$
is infinite. By \ref{lem:multiplicity}, there is a $P$- and $R$-regular
sequence $\bx$ of length $d$ in $P$, such that $\length(\ov R)=2$,
where overbars denote reduction modulo $\bx$.  {From} \eqref{eq:fiberOv1}
and $I\ne0$ one gets $\length(\ov T)=\length(\ov R)-\length(\ov I)\le1$.
This implies $\length(\ov T)=1= \length(\ov V)$, so Lemma \ref{regularSum}
and \eqref{eq:lengthSum} give $\length(\ov Q)=\length(\ov S)$.

Setting $K=\Ker(Q\to S)$, one sees that the induced sequence
  \[
0\lra \ov K\lra\ov Q\lra\ov S\lra0
  \]
is exact, due to the $S$-regularity of $\bx$, hence $\ov K=0$, and thus $K=0$.
  \end{proof}

A construction of canonical modules sets the stage for the next result.

  \begin{subsec}
    \label{dualizing}
The ideal $(0:I)$ of $R$ is a $T$-module, which is isomorphic to 
$\Hom_R(T,R)$.  Similarly, $(0:J)\cong\Hom_S(T,S)$ as $T$-modules.  If 
$R$ and $S$ are Gorenstein, $T$ is Cohen-Macaulay,  and all three rings
 have dimension $d$, then $(0:I)$ and $(0:J)$ are isomorphic $T$-modules, 
 since both are canonical modules for $T$; see \cite[3.3.7]{BH}.
    \end{subsec}

  \begin{theorem}
 \label{gorenstein}
Let $R$ and $S$ be Gorenstein local rings of dimension $d$, let $T$ be
a Cohen-Macaulay local ring of dimension $d$ and $V$ a canonical 
module for $T$.  

Let $\eR$, $\eS$, $\iR$, and $\iS$ be maps that 
satisfy the conditions in \emph{\ref{setupSum}} and, in addition,
 \[
\iR(V)=(0:I) \quad\text{and}\quad \iS(V)=(0:J) \,.
  \] 

If $I\ne0$ or $J\ne0$, then $R\#_TS$ is a Gorenstein local ring of dimension $d$.
  \end{theorem}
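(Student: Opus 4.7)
My plan is to reduce to the artinian case $d=0$ using a regular sequence and then compute $\soc Q$ directly to prove $\type Q = 1$. For the reduction I invoke Proposition \ref{cmSum}, which requires $R/\iR(V) = R/(0:I)$ and $S/\iS(V) = S/(0:J)$ to be Cohen-Macaulay of dimension $d$. Since $(0:I) \cong \Hom_R(T, R)$ is a canonical module of $T$, it is maximal Cohen-Macaulay over $R$, so $\Ext^i_R((0:I), R) = 0$ for $i \geq 1$. Applying $\Hom_R(-, R)$ to $0 \to (0:I) \to R \to R/(0:I) \to 0$, the vanishing of $\Ext^1$ reduces to checking that the canonical map $R \to \Hom_R((0:I), R) \cong T$ is the surjection $\eR$; hence $\Ext^i_R(R/(0:I), R) = 0$ for all $i \geq 1$, and so $R/(0:I)$ is maximal Cohen-Macaulay. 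Prime avoidance then produces $\bx \subseteq \fp$ of length $d$ regular on $R$, $S$, $T$, $V$, $R/(0:I)$, and $S/(0:J)$, and Lemma \ref{regularSum} gives $\ov Q \cong \ov R \#_{\ov T} \ov S$. Since canonical modules and $\Hom(T,R)$ commute with regular quotients, $\ov V$ is canonical for $\ov T$ and $\ov{\iR}(\ov V) = (0:_{\ov R}\ov I)$, while Nakayama shows $\ov I \ne 0$ whenever $I \ne 0$. So I may assume $d=0$.

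In the artinian case, $V$ is a canonical module of artinian local $T$, hence the injective hull of $k$ over $T$, so $\soc V \cong k$. I first observe that necessarily both $I \ne 0$ and $J \ne 0$: if $I=0$, then $\eR$ and $\iR$ are isomorphisms, and the compatibility $\eR\iR = \eS\iS$ splits $\eS$ as an $S$-module map, producing a decomposition $S \cong (0:J) \oplus J$; this is impossible in a local ring unless $J=0$, contrary to hypothesis. Consequently $\soc R \subseteq I$ and $\soc S \subseteq J$, so $\eR\iR(\soc V) \subseteq \eR(\soc R) = 0$ and likewise $\eS\iS(\soc V) = 0$. I then compute $\soc Q$ by unwinding $\fp \cdot (r,s) \subseteq \iota(V)$ for a representative $(r,s)\in P$ of a socle element. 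Testing against pairs $(i,0) \in \fp$ with $i \in I$ forces $Ir = 0$, so $r \in (0:I) = \iR(V)$; symmetrically $s \in \iS(V)$. Writing $r = \iR(v_1)$ and $s = \iS(v_2)$, a general $(r',s') \in \fp$ with $t = \eR(r') = \eS(s')$ gives $(r'r, s's) = (\iR(tv_1), \iS(tv_2))$, which lies in $\iota(V)$ iff $tv_1 = tv_2$; as $t$ ranges over $\eR(\fr) = \ft$, this amounts to $v_1 - v_2 \in \soc V$. Conversely every such pair produces a socle element, since $\eR\iR(\soc V)=0$ ensures $(\iR(v_1), \iS(v_2)) \in P$. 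This identifies $\soc Q$ with the quotient of $\{(v_1,v_2) \in V\oplus V : v_1 - v_2 \in \soc V\}$ by the diagonal copy of $V$, and the map $(v_1,v_2) \mapsto v_1-v_2$ induces an isomorphism of this quotient with $\soc V \cong k$. Hence $\type Q = 1$, and combined with Cohen-Macaulayness, $Q$ is Gorenstein of dimension $d$.

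The main technical hurdle is the reduction step: the Gorenstein linkage claim that $R/(0:I)$ is Cohen-Macaulay of dimension $d$, and the verification that the identification $\iR(V) = (0:I)$ descends correctly under the regular-sequence quotient. The artinian socle calculation itself is then a direct verification.
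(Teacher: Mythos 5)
Your proof is correct and follows essentially the same strategy as the paper: first reduce to the artinian case via a regular sequence (using Proposition \ref{cmSum} and Lemma \ref{regularSum}), then show $\soc(R\#_TS)\cong k$ by a direct calculation. The only differences are cosmetic: the paper cites Peskine--Szpiro \cite[1.3]{PS} where you inline a linkage argument via $\Ext$-vanishing and biduality over the Gorenstein ring $R$; you test socle elements against the cleaner pairs $(i,0)$ rather than the paper's $(i,\iS(u))$; and you package the socle computation as an isomorphism with the quotient $\{(v_1,v_2): v_1-v_2\in\soc V\}/\Delta(V)\cong\soc V$, whereas the paper exhibits the explicit generator $\kappa(\iR(u),0)$ with $\soc V=Tu$ --- two descriptions of the same calculation.
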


  \begin{Remark}
The condition $I\ne0$ is equivalent to $J\ne0$.

Indeed $I=0$ implies $R=(0:I)=\iR(V)$, hence $\eS\iS(V)=\eR\iR(V)=T$.  In
particular, for some $v\in V$ one has $\eS\iS(v)=1\in T$, hence $S=S\iS(v)
\subseteq(0:J)$, and thus $J=0$.  By symmetry, $J=0$ implies $I=0$.
  \end{Remark}

  \begin{proof}[Proof of Theorem \emph{\ref{gorenstein}}]
The $T$-module $V$ is Cohen-Macaulay of dimension $d$, 
see~\cite[3.3.13]{BH}.  The rings $R/(0:I)$ and $S/(0:J)$ have the same
property, by \cite[1.3]{PS}.  Proposition \ref{cmSum} now shows that
the ring $Q$ is Cohen-Macaulay of dimension $d$.

Choose in $P$ an $(R/\iR(V)\oplus S/\iS(V)\oplus Q\oplus T\oplus
V)$-regular sequence $\bx$ of length $d$.  It suffices to show that
$Q/\bx Q$ is Gorenstein.  The $T/\bx T$-module $V/\bx V$ is canonical,
see \cite[3.3.5]{BH}, so reduction modulo $\bx$ preserves the hypotheses
of the theorem. In view of Lemma \ref{regularSum}, we may assume that
all rings involved are artinian.

Now we have $\soc V=Tu$ for some $u\in\soc T$; see \cite[3.3.13]{BH}.
To prove that $Q$ is Gorenstein we show that $\kappa(\iota_R(u),0)$
generates $\soc Q$.  Write $q\in\soc Q$ in the form
  \[
q=\kappa(a,b)
  \quad\text{with}\quad
(a,b)\in \fr\times_T\fs=\fp\,.
  \]
As $S$ is Gorenstein, one has $\iS(u)\in\soc S\subseteq J$.  For every
$i\in I$ this gives $\eR(i)=0=\eS\iS(u)$.  Thus, $(i,\iS(u))$ is in $\fp$,
so $\kappa(i,\iS(u))\cdot q\in\fq\cdot q=0$ holds, hence
  \[
(ia,0)=(i,\iS(u))\cdot(a,b)=(\iR(x),\iS(x))
  \]
for some $x\in V$.  Since $\iS$ is injective we get $x=0$, hence $ia=0$.
As $i$ was arbitrarily chosen in $I$, this implies $a\in(0:I)$; that is,
$a=\iR(v)$ for some $v$ in $V$.  By symmetry, we conclude $b=\iS(w)$
for some $w\in V$.  As a consequence, we get
  \[
q=\kappa(\iR(v),\iS(w))
  \quad\text{with}\quad
v,w\in V\,.
  \]

Pick any $t$ in $\ft$, then choose $r$ in $\fr$ and $s$ in $\fs$ with 
$\eR(r)=t=\eS(s)$.  Thus, $(r,s)$ is in $\fp$, hence $\kappa(r,s)$ is in 
$\fq$, whence $\kappa(r,s)\cdot q=0$.  We then have
  \begin{align*}
(\iR(tv),\iS(tw))=(r\iR(v),s\iS(w))=(r,s)\cdot(\iR(v),\iS(w))=(\iR(y),\iS(y))
  \end{align*}
for some $y\in V$.  This yields $\iR(tv)=\iR(y)$ and $\iS(tw)=\iS(y)$, hence 
$tv=y=tw$, due to  the injectivity of $\iR$ and $\iS$; in other words,
$t(v-w)=0$.  Since $t$ was an arbitrary element of $\ft$, we get 
$\ft(v-w)=0$, hence $v=w+t'u$ for some $t'\in T$.  Choosing $r'$ 
in $R$ and $s'$ in $S$ with $\eR(r')=t'=\eS(s')$, we have $(r',s')\in P$ and
  \begin{align*}
q&=\kappa(\iR(w),\iS(w))+\kappa(\iR(t'u),0)
  \\
& =\kappa\big((r',s')\cdot(\iR(u),0)\big)
  \\
& =\kappa(r',s')\cdot\kappa(\iR(u),0)
  \end{align*}
As $q$ can be any element of $\soc Q$, we get $\soc Q=
Q\cdot\kappa(\iR(u),0)$, as desired. 
  \end{proof}

  \section{Examples and variations}
    \label{Examples}
    
We collect examples to illustrate the hypotheses and the conclusions of 
results proved above, and review variants and antecedents of the notion 
of connected sum.  

Seemingly minor perturbations of diagram \eqref{diagramSum} may lead to 
non-isomorphic connected sum rings.  Next we produce a concrete 
illustration.  See also Example \ref{sah} for connected sums that are not 
isomorphic \emph{as graded algebras}. 

  \begin{example}
    \label{fermat}
Over the field $\bbQ$ of rational numbers, form the algebras 
  \[
R=\bbQ[x]/(x^3)\,,
  \quad
S=\bbQ[y]/(y^3)\,,
  \quad\text{and}\quad
T=\bbQ\,.
  \]
Letting both $\eR\colon R\to T$ and $\eS\colon S\to T$ be the canonical 
surjections, one gets 
  \[
R\times_TS=\bbQ[x,y]/(x^3,xy,y^3)\,.
 \]

Set $V=\bbQ$ and let $\iR\colon V\to R$ and $\iS\colon V\to S$ be the maps
$q\mapsto qx$ and $q\mapsto qy$, respectively.  The connected sum defined 
by these data is a local ring $(Q,\fq,k)$ with
  \[
Q=\bbQ[x,y]/(x^2-y^2,xy)\,.
  \]

On the other hand, take the same maps $\eR$, $\eS$, and $\iS$ as above, 
and replace $\iR$ with the map $q\mapsto pq$, where $p$ is a prime number 
that is not congruent to $3$ modulo~$4$.  We then get as connected sum 
a local ring $(Q',\fq',\bbQ)$ with
  \[
Q'=\bbQ[x',y']/(x'^2-py'^2,x'y')\,.
  \]

We claim that these rings are not isomorphic.  In fact, more is true:  

Every ring homomorphism $\kappa\colon Q'\to Q$ satisfies $\kappa(\fq')
\subseteq \fq^2$.  

Indeed, any ring homomorphism of $\bbQ$-algebras is $\bbQ$-linear, 
so $\kappa$ is a homomorphism of $\bbQ$-algebras.  The images of
$x'$ and $y'$ can be written in the form
  \begin{align*}
\kappa(x')&=ax+by+cy^2
\\
\kappa(y')&=dx+ey+fy^2
    \end{align*}
for appropriate rational numbers $a$, $b$, $c$, $d$, $e$, and $f$.  
In $Q$ this gives equalities 
  \begin{align*}
(a^2+b^2)x^2=a^2x^2+b^2y^2=\kappa(x'^2)=\kappa(py'^2)=p(d^2x^2+e^2y^2)
=p(d^2+e^2)x^2
    \end{align*}

We need to show that the only rational solution of the equation
  \begin{equation}
    \label{eq:fermat}
a^2+b^2=p(d^2+e^2)
  \end{equation}
is the trivial one.  If not, then $a^2+b^2\ne0$.  Clearing denominators, we 
may assume $a,b,c,d\in\bbZ$ and write $a^2+b^2=p^ig$ and $d^2+e^2=p^jh$ 
with integers $g,h,i,j\ge0$, such that $gh$ is not divisible by $p$.  By 
Fermat's Theorem on sums of two squares, see \cite[\S5.6]{Sm}, $i$ and $j$ 
must be even.   This is impossible, as \eqref{eq:fermat} forces $i=j+1$.  
    \end{example}

Now we turn to graded rings and degree-preserving homomorphisms.

Recall that the \emph{Hilbert series} of a graded vector space $D$ over a
field $k$, with $\rank_kD_n<\infty$ for all $n\in\bbZ$ and $D_n=0$ for $n\ll0$, 
is the formal Laurent series
   \[
H_D=\sum_{n>-\infty}\rank_k(D_n)z^n\in\bbZ[\![z]\!][z^{-1}]\,.
  \]

  \begin{remark}
    \label{gradedProd}
Let $k$ be a field and assume that the rings $R$ and $S$ in diagram
\eqref{diagramProd} are commutative finitely generated $\bbN$-graded
$k$-algebras with $R_0=k=S_0$, and the maps are homogeneous.  Equation
\eqref{eq:lengthProd} then can be refined to: \begin{equation}
    \label{eq:hilbertProd}
H_{R\times_TS}=H_R+H_S-H_T\,,
 \end{equation}
and the obvious version of Theorem \ref{cmProd} for graded rings holds as well.  

Assume, in addition, that in the diagram \eqref{diagramSum} all maps
are homogeneous.  Equation \eqref{eq:lengthSum} then can be refined to:
\begin{equation}
    \label{eq:hilbertSum}
H_{R\#_TS}=H_R+H_S-H_T-H_V\,.
 \end{equation}
  \end{remark}

Diligence is needed to state a graded analog of Theorem \ref{gorenstein}.
Recall that for each finite graded $T$-module $N$ one has $H_M=h_N/g_N$
with $h_N\in\mathbb Z[z^{\pm 1}]$ and $g_N\in\mathbb Z[z]$, and that the
integer $a(N)=\deg(h_N)-\deg(g_N)$ is known as the \emph{$a$-invariant} of $N$.

  \begin{theorem}
    \label{gorensteinAnlog}
Let $R\xra{\eR} T\xla{\eS}S$ be surjective homomorphisms of commutative
$\bbN$-graded $k$-algebras of dimension $d$, with $R_0=k=S_0$.  Assume 
that $R$ and $S$ are Gorenstein, $T$ is Cohen-Macaulay, and $V$ is
a canonical module for $T$.

A connected sum diagram \eqref{diagramSum}, with $\iR$ and $\iS$
isomorphisms of graded modules, exists if and only if $a(R)=a(S)$.
When this is the case the graded algebra $R\#_TS$ is Gorenstein of 
dimension $d$, with $a(R\#_TS)=a(R)$ and
 \begin{equation}
    \label{eq:hilbertSumA}
H_{R\#_TS}(z)=H_R(z)+H_S(z)-H_T(z)-(-1)^dz^{a(R)}\cdot H_T(z^{-1})\,.
 \end{equation}
  \end{theorem}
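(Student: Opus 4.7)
The plan is to identify $(0:I)$ and $(0:J)$ as explicit graded shifts of $\omega_T$, then derive the existence criterion, invoke Theorem \ref{gorenstein} for the Gorenstein property, and compute the Hilbert series from \eqref{eq:hilbertSum}. First I would use the standard graded fact that $\omega_R\cong R(a(R))$ as graded $R$-modules whenever $R$ is Gorenstein graded over $k$ with $R_0=k$. Since $T=R/I$ is Cohen--Macaulay of the same dimension as $R$, applying $\Hom_R(T,-)$ yields $\omega_T\cong\Hom_R(R/I,R(a(R)))\cong(0:I)(a(R))$ as graded $T$-modules, so $(0:I)\cong\omega_T(-a(R))$; symmetrically $(0:J)\cong\omega_T(-a(S))$.

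For necessity of $a(R)=a(S)$, I would observe that graded isomorphisms $V\cong(0:I)$ and $V\cong(0:J)$ give $\omega_T(-a(R))\cong\omega_T(-a(S))$, and since the Hilbert series of a nonzero finitely generated graded module determines its shift, this forces $a(R)=a(S)$. For sufficiency I would take $V=\omega_T(-a(R))$ and pick graded isomorphisms $\iR,\iS$ onto $(0:I),(0:J)$. Then I would use the rescaling freedom afforded by graded automorphisms of $V$, which form $k^\times$ (coming from scalars on $\omega_T$), to adjust $\iS$ so that $\eR\iR=\eS\iS$: both composites land in the single graded piece of $T$ corresponding to the degree of the generator of $V$, and a scalar adjustment suffices. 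With the resulting diagram in hand, Theorem \ref{gorenstein} (applied to the graded-local rings obtained by localizing at the irrelevant ideals, where graded Gorenstein coincides with ordinary Gorenstein) gives that $Q=R\#_TS$ is Gorenstein of dimension $d$.

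The Hilbert series formula \eqref{eq:hilbertSumA} will follow from the graded length identity \eqref{eq:hilbertSum} upon substituting
\[ H_V(z)=z^{a(R)}H_{\omega_T}(z)=(-1)^d z^{a(R)}H_T(z^{-1}), \]
using the graded Cohen--Macaulay identity $H_{\omega_T}(z)=(-1)^d H_T(z^{-1})$. For the $a$-invariant, I would use the characterization of $a(Q)$ as the unique integer $s$ satisfying $H_Q(z)=(-1)^d z^s H_Q(z^{-1})$ for graded Gorenstein $Q$; replacing $z$ by $z^{-1}$ in \eqref{eq:hilbertSumA} and applying the analogous symmetries of $H_R$ and $H_S$ (with common exponent $a=a(R)=a(S)$) yields the same symmetry for $H_Q$ with exponent $a$, giving $a(Q)=a$.

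The main obstacle I expect is the sufficiency part of the existence: enforcing $\eR\iR=\eS\iS$ is not automatic, and it requires care in exploiting the scalar freedom of graded automorphisms of $\omega_T(-a(R))$, particularly in degenerate cases where one or both composites vanish. Everything else is bookkeeping with graded shifts and Hilbert series manipulations.
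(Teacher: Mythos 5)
Your approach is essentially the paper's.  Both arguments hinge on the same graded duality computation (the paper cites \cite[4.4.5]{BH} to get $H_{\Hom_R(T,R)}(z)=(-1)^dz^{a(R)}H_T(z^{-1})$, which is exactly your identification $(0:I)\cong\omega_T(-a(R))$ dressed up), and both then read off the existence criterion $a(R)=a(S)$ from comparing Hilbert series of graded $T$-modules, invoke Theorem~\ref{gorenstein}, and obtain \eqref{eq:hilbertSumA} from \eqref{eq:hilbertSum}.  You go further than the printed proof in two places: you spell out the $a$-invariant of $R\#_TS$ via the functional equation for Hilbert series of graded Gorenstein rings (the paper simply asserts $a(R\#_TS)=a(R)$ without comment), and you attempt to justify that a \emph{commuting} diagram exists, which the paper leaves entirely implicit.

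That last point is where your argument has a flaw.  The claim that a ``scalar adjustment suffices'' to force $\eR\iR=\eS\iS$ rests on the picture that both composites land in a one-dimensional graded piece of $T$; but $V\cong\omega_T(-a(R))$ need not be cyclic when $T$ is merely Cohen--Macaulay, and even when it is cyclic the image of a generator of $V$ under $\eR\iR$ can be any element of a graded component $T_m$ of dimension $>1$.  Thus $\eR\iR$ and $\eS\iS$ are two elements of $\Hom_T(V,T)_0$ that need not be proportional, and no rescaling of $\iR$ and $\iS$ (nor postcomposition with a graded $T$-module automorphism of $V$, which for many $T$ is only $k^\times$) will reconcile them.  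For a concrete illustration, already when $R$ and $S$ are two Gorenstein quotients of the same regular ring mapping onto $T=k[x,y]/(x,y)^2$ with $a(R)=a(S)=2$, the $T$-module structures that $\eR$ and $\eS$ induce on $(0:I)$ and $(0:J)$ can be ``twisted'' against each other so that no graded $T$-isomorphism $\iS\colon V\to(0:J)$ makes the square commute once $\iR$ is fixed.  So the sufficiency half of the existence claim is not finished by your argument; note, however, that the paper's proof skips this step entirely, so the omission is inherited rather than introduced.  Everything else — the graded duality bookkeeping, the Hilbert series manipulation, the invocation of Theorem~\ref{gorenstein} — lines up with the paper.
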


  \begin{proof}
{From} \cite[4.4.5]{BH} one obtains
  \[
H_{\Hom_R(T,R)}(z)
=z^{a(R)}\cdot H_{\Hom_R(T,R(a))}(z)
=(-1)^dz^{a(R)}\cdot H_T(z^{-1})\,,
  \]
and a similar formula with $S$ in place of $R$. Thus, $\Hom_R(T,R)
\cong\Hom_S(T,S)$ holds as \emph{graded} $T$-modules if and only if
$a(R)=a(S)$.  In this case, Theorem \ref{gorenstein} (or its proof)
shows that $R\#_TS$ is Gorenstein, and formula \eqref{eq:hilbertSum}
yields \eqref{eq:hilbertSumA}.
  \end{proof}

Generation in degree $1$ does not transfer from $R$ and $S$ to 
$R\times_TS$ or $R\#_TS$:

   \begin{example} 
  \label{nonstandard}
Set $T=k[z]/(z^2)$ and form the homomorphisms of $k$-algebras
  \[
R=k[x]/(x^5)\to T\gets k[y]/(y^5)=S
  \quad\text{with}\quad
x\mapsto z \gets y\,.
  \]
Choose $V=T$ and define homomorphisms $R\xla{\iR} V\xra{\iS}S$ by 
setting $\iR(1)=x^3$ and $\iS(1)=y^3$.  The graded $k$-vector space 
$R\times_TS$ has a homogeneous basis
  \[
\{(x^i,y^i)\}_{0\le i\le4}\cup\{(0,y^j)\}_{2\le j\le4}\,,
  \]
which yields $R\times_TS\cong k[u,v]/(u^5,uv^2,v^2-u^2v)$ 
with $\deg(u)=1$ and $\deg(v)=2$. 

The canonical module of $T$ is isomorphic to $(x^3)\subset R$ and $(y^3)
\subset S$. Therefore, one gets $R \#_T S \cong k[u,v]/(v^2-u^2v,2uv-u^3)$, 
with degrees as above.
  \end{example}

  \begin{remark}
    \label{bimodules}
For the definition of connected sum given in \eqref{eq:Sum} to work in
a non-commutative context, the only change needed is to require that
the maps $\iR$ and $\iS$ in diagram \eqref{diagramSum} be homomorphisms
of $T$-\emph{bimodules}.

When the maps in the diagram are homogeneous  homomorphisms of rings
and bimodules, the resulting connected sum is a graded ring.
  \end{remark}

Remark \ref{bimodules} is used implicitly in the next two examples,
which deal with graded-commutative, rather than commutative, $k$-algebras.

  \begin{example}
    \label{manifolds}
Let $M$ and $N$ be compact connected oriented smooth manifolds of the
same dimension, say $n$.  The connected sum $M\#N$ is the manifold
obtained by removing an open $n$-disc from each manifold and gluing the
resulting manifolds with boundaries along their boundary spheres
through an  orientation-reversing homeomorphism.  The cohomology
algebras with coefficients in a field $k$ satisfy $\HH^*(M \#
N)\cong\HH^*(M)\#_k\HH^*(N)$, with $\varepsilon_{\HH^*(M)}$
and $\varepsilon_{\HH^*(N)}$ the canonical augmentations, $V=k$, and
$\iota_{\HH^*(M)}(1)$ and $\iota_{\HH^*(N)}(1)$ the orientation classes.
  \end{example}

What may be the earliest discussion of connected sums in a
ring-theoretical context followed very closely the topological model:

  \begin{example}
    \label{sah}
Sah \cite{Sa} formed connected sums of \emph{graded Poincar\'e duality algebras}
along their orientation classes, largely motivated by the following special case:

A Poincar\'e duality algebra $R$ 
with $R_i=0$ for $i\ne0,1,2$ is completely described by the quadratic form
$R_1\to k$, obtained by composing the map $x\mapsto x^2$ with the inverse 
of the orientation isomorphism $k\xra{\cong}R_2$.  Such algebras are isomorphic if 
and only if the corresponding forms are equivalent, and the connected sum of 
two such algebras corresponds to the
Witt sum of the corresponding quadratic forms.
    \end{example}

\section{Gorenstein colength}
   \label{Gorenstein colength}

Let $(Q,\fq,k)$ be an artinian local ring and $E$ an injective hull
of $k$.

The \emph{Gorenstein colength} of $Q$ is defined in \cite{A1} to be
the number
  \[
\gcl Q = \min\left\{\length(A)-\length(Q)\,\left|\,
\begin{gathered}
Q\cong A/I \text{ with $A$ an artinian}\\
\text{Gorenstein local ring}
  \end{gathered}
 \right\}\right..
  \]
One has $\gcl Q=0$ if and only if $Q$ is Gorenstein, and
  \[
0\le \gcl Q\le\length(Q)<\infty\,,
  \]
as the trivial extension $Q\ltimes E$ is Gorenstein, see \cite[3.3.6]{BH},
and $\length(Q\ltimes E)=2\length(Q)$.

  \begin{lemma}
    \label{edim}
If $Q$ is a non-Gorenstein artinian local ring and $Q\to C$ is a
surjective homomorphism with $C$ Gorenstein, then the following inequality
holds:
  \[
\gcl Q\ge\edim(Q)-(\length(Q)-\length(C))\,.
  \]
 \end{lemma}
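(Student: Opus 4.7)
The plan is to fix any artinian Gorenstein local ring $A$ equipped with a surjection $A\twoheadrightarrow Q$, and prove, with $\mb$ denoting the kernel of the composition $A\to Q\to C$, that
\[
\length(\mb)\,=\,\length(A)-\length(C)\,\ge\,\edim(Q)\,.
\]
Taking the infimum over all such $A$ then yields the inequality, since $\length(A)-\length(Q) = \length(\mb)-(\length(Q)-\length(C))$.

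First I would verify that $\mb\ne 0$, equivalently that $C\ne A$. Indeed, if $A=C$, then the composition $A\twoheadrightarrow Q\twoheadrightarrow C=A$ is a surjective endomorphism of the artinian ring $A$, hence an isomorphism, forcing $A\cong Q$ and contradicting the hypothesis that $Q$ is not Gorenstein. Thus $\mb\subseteq\fm_A$, where $\fm_A$ denotes the maximal ideal of $A$.

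The heart of the argument is to exploit Matlis duality over $A$. Because $A$ is artinian Gorenstein, $A$ is its own injective hull of $k$, and the canonical module of $C$ satisfies
\[
\omega_C\,\cong\,\Hom_A(C,A)\,=\,(0:_A\mb)\,.
\]
Since $C$ is Gorenstein, $\omega_C\cong C$ as $C$-modules, so $(0:_A\mb)$ is a cyclic $A$-module, say $(0:_A\mb)=Ax$. By the double-annihilator property of Matlis duals in Gorenstein artinian rings, $\Ann_A(x)=\mb$, so $Ax\cong A/\mb=C$. In particular $\length(Ax)=\length(C)$. Because $\mb\ne 0$, the element $x$ is not a unit, so $x\in\fm_A$.

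The conclusion is a short length computation. Since $Ax$ is cyclic one has $\length(\fm_Ax)=\length(Ax)-1=\length(C)-1$, while $x\in\fm_A$ forces $\fm_Ax\subseteq\fm_A^2$, hence
\[
\length(C)\,\le\,\length(\fm_A^2)+1\,.
\]
Combined with $\length(A)=1+\edim(A)+\length(\fm_A^2)$, this yields $\length(A)-\length(C)\ge\edim(A)$, and the required bound follows from $\edim(A)\ge\edim(Q)$, which holds because the surjection $A\twoheadrightarrow Q$ induces a surjection $\fm_A/\fm_A^2\twoheadrightarrow\fq/\fq^2$.

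The only nontrivial step is the identification of $(0:_A\mb)$ as a principal ideal; everything else is a length count. I expect this is the main obstacle only insofar as one must invoke the two properties of Gorenstein artinian rings (self-duality and cyclicity of $\omega_C$ when $C$ is Gorenstein) in the right combination; the rest of the argument proceeds by routine calculation.
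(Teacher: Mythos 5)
Your argument is correct, and it takes a route that is recognizably different in organization from the paper's. The paper factors the surjection $A\to Q$ through $\ov A=A/\soc A$, identifies $\soc(\ov A)\cong\Hom_k(\fa/\fa^2,k)$ so that $\rank_k\soc(\ov A)=\edim A$, and then uses that the one-dimensional socle of $C$ can absorb at most one dimension of $\soc(\ov A)$; the kernel $K$ of $\ov A\to C$ thus has $\length(K)\ge\edim A-1$, and the length count finishes. You instead stay inside $A$ and dualize: $(0:_A\mb)\cong\Hom_A(C,A)\cong\omega_C\cong C$ is a cyclic ideal $Ax$ of length $\length(C)$ generated by a non-unit, so $\fm_Ax\subseteq\fm_A^2$ gives $\length(C)-1\le\length(\fm_A^2)$, and then $\length(A)=1+\edim A+\length(\fm_A^2)$ yields $\length(A)-\length(C)\ge\edim A\ge\edim Q$. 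The two arguments are Matlis-dual to one another (socles versus annihilator ideals), and both ultimately establish the same intermediate bound $\length(A)-\length(C)\ge\edim A$; your version has the minor advantage of avoiding the auxiliary ring $\ov A$, at the cost of invoking the identification of $\omega_C$ with a cyclic annihilator ideal and the double-annihilator property. Both proofs are correct and roughly comparable in length and in the background they require.
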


  \begin{proof}
Let $A\to Q$ be a surjection with $(A,\fa,k)$  Gorenstein
and $\length(A)-\length(Q)=\gcl Q$.  It factors through $\ov A\to Q$, where $\ov
A=A/\soc A$.  Applying $\Hom_A(-,A)$ to the exact sequence 
$0\to\fa/\fa^2\to A/\fa^2\to A/\fa\to0$, one gets an exact sequence
  \[
0\to(0:\fa)_A\to(0:\fa^2)_A\to\Hom_A(\fa/\fa^2,A)\to0
  \]
that yields $\soc(\ov A)\cong\Hom_A(\fa/\fa^2,A)$.  As $\fa$ annihilates
$\fa/\fa^2$, the second module is isomorphic to $\Hom_k(\fa/\fa^2,k)$.
Set $K=\Ker(\ov A\to C)$.  Since $\length(\soc(C))=1$, the inclusion
$\soc(\ov A)/(K\cap(\soc(\ov A)) \subseteq\soc(C)$ gives the second
inequality below:
  \[
\length(K)\ge\length(K\cap\soc\ov A)\ge
\length(\soc(\ov A))-1=\edim A-1\ge\edim Q-1.
  \]
The desired inequality now follows from a straightforward length count:
  \[
\length(A)-\length(Q)=(\length(K)+1)-(\length(Q)-\length(C))
\ge\edim Q-(\length(Q)-\length(C))\,.\qedhere
  \]
 \end{proof}

Rings of embedding dimension $1$ need separate consideration.

  \begin{subsec}
    \label{gorbysocle}
Let $(S,\fs,k)$ be an artinian local ring with $\edim S\le1$.

The ring $S$ is Gorenstein, and one has $S\cong C/(x^n)$ with $(C,(x),k)$ a 
discrete valuation ring and $n=\length(S)$; thus, there is a surjective, but 
not bijective, homomorphism $B\to S$, where $B=C/(x^{n+1})$ is artinian,
Gorenstein, with $\length(B)=\length(S)+1$.
  \end{subsec}

  \begin{proposition}
    \label{CSandGC}
Let $(R,\fr,k)$ and $(S,\fs,k)$ be artinian local rings, with $\fr\ne0\ne\fs$.
  \begin{enumerate}[\rm(1)]
    \item
When $R$ and $S$ are Gorenstein, there is an inequality
  \[
\gcl(R \times_k S)\ge\edim R+\edim S-1\,;
  \]
equality holds if $\edim R=1=\edim S$.
    \item
When $R$ is not Gorenstein, there are inequalities
  \[
1\le\gcl(R \times_k S) \leq 
  \begin{cases}
\gcl R &\text{if}\quad\edim S=1\,;
  \\
\gcl R+\gcl S-1 &\text{if}\quad\gcl S\ge1\,.
  \end{cases}
   \]
    \end{enumerate}
  \end{proposition}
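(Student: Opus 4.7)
The plan begins with two preliminary equalities that follow from $T=k$. The length formula \eqref{eq:lengthProd} gives $\length Q = \length R + \length S - 1$ where $Q = R\times_k S$, and the relation $\fp^2 = \fr^2\oplus\fs^2$ inside $P$ (valid because $\fr\cdot\fs = 0$ there) gives $\fp/\fp^2 \cong \fr/\fr^2 \oplus \fs/\fs^2$, and hence $\edim Q = \edim R + \edim S$. Since $\soc Q = \soc R \oplus \soc S$, the ring $Q$ is never Gorenstein under either hypothesis; this already supplies the trivial bound $\gcl Q \ge 1$ needed in (2).

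For the lower bound in (1), my plan is to run the Matlis-duality argument of Lemma \ref{edim} on the two Gorenstein quotients $R$ and $S$ of $Q$ simultaneously. Let $A \to Q$ be a minimal Gorenstein cover, set $\ov A = A/\soc A$, and let $\ov K$, $\ov K_R$, $\ov K_S$ denote the kernels of the induced maps $\ov A \to Q$, $\ov A \to R$, $\ov A \to S$. Since each of $\soc R$ and $\soc S$ is one-dimensional, the computation in the proof of Lemma \ref{edim} shows that both $\soc\ov A \cap \ov K_R$ and $\soc\ov A \cap \ov K_S$ have codimension at most $1$ in the $\edim A$-dimensional $k$-space $\soc\ov A$. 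The key point is the identification $\ov K = \ov K_R \cap \ov K_S$, forced by the fibre-product description of $Q$; codimensions of intersections of subspaces add, so $\soc\ov A \cap \ov K$ has codimension at most $2$ in $\soc\ov A$. This yields $\length\ov K \ge \edim A - 2$, and hence $\gcl Q = \length\ov K + 1 \ge \edim A - 1 \ge \edim Q - 1$, as claimed.

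For the equality case $\edim R = \edim S = 1$, both $R$ and $S$ are themselves Gorenstein by \ref{gorbysocle}, and that same item supplies Gorenstein covers $B_R \to R$ and $B_S \to S$ of lengths $\length R + 1$ and $\length S + 1$ whose kernels equal their socles. I would then apply Theorem \ref{gorenstein} to the connected sum $B_R \#_k B_S$, with $V = k$ embedded in each socle: the result is Gorenstein, of length $\length Q + 1$ by \eqref{eq:lengthSum}, and the canonical surjection $B_R \times_k B_S \to R\times_k S$ descends to a surjection $B_R \#_k B_S \to Q$ because the socle generators being modded out land in the respective kernels. Hence $\gcl Q \le 1$, and equality holds.

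The upper bounds in (2) fit the same connected-sum template. Take a minimal Gorenstein cover $A \to R$ of length $\length R + \gcl R$, and take $B \to S$ to be either the \ref{gorbysocle} cover (of length $\length S + 1$) in the case $\edim S = 1$, or a minimal Gorenstein cover of $S$ (of length $\length S + \gcl S$) in the case $\gcl S \ge 1$. Since every nonzero ideal in an artinian Gorenstein ring contains its socle, the inclusions $\soc A \subseteq \Ker(A \to R)$ and $\soc B \subseteq \Ker(B \to S)$ are automatic, so $A \#_k B$, formed by socle embeddings, is Gorenstein by Theorem \ref{gorenstein} and surjects onto $Q$. Its length $\length A + \length B - 2$, read off from \eqref{eq:lengthSum}, minus $\length Q$, produces exactly the two upper bounds $\gcl R$ and $\gcl R + \gcl S - 1$. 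The most delicate step overall is the codimension-$2$ intersection bound in (1), whose force depends on the clean fibre-product identification $\ov K = \ov K_R \cap \ov K_S$ and on the additivity of codimensions in the socle of $\ov A$.
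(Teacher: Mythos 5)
Your proof is correct, and it diverges from the paper's in an interesting way for part (1). For the lower bound, the paper applies Lemma \ref{edim} to the surjection $R\times_kS\to R\#_kS$, which requires Theorem \ref{gorenstein} to know that the connected sum is Gorenstein. You instead run the Matlis-duality computation from Lemma \ref{edim} directly against the two Gorenstein quotients $R$ and $S$ of $Q=R\times_kS$, use the fibre-product fact $\ov K=\ov K_R\cap\ov K_S$, and then intersect the two corank-at-most-one subspaces of $\soc\ov A$. This is a genuinely more elementary route: it does not build any new ring and it isolates exactly which structural feature of the fibre product produces the bound. One small slip in wording: codimensions of intersections of subspaces are \emph{subadditive}, not additive; what you need (and what you use) is $\codim(U_1\cap U_2)\le\codim U_1+\codim U_2$, so the conclusion $\codim_{\soc\ov A}(\soc\ov A\cap\ov K)\le 2$ stands.

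You also go further than the paper on the equality clause of (1). The paper's proof of (1) as written only establishes the lower bound; the claimed equality when $\edim R=\edim S=1$ requires the upper bound $\gcl(R\times_kS)\le 1$, which the paper does not spell out. Your construction --- forming $B_R\#_kB_S$ from the \ref{gorbysocle} covers, checking that its length is $\length(R\times_kS)+1$ by \eqref{eq:lengthSum}, and that it surjects onto $R\times_kS$ because the embedded socle copies of $V=k$ die in $R$ and $S$ --- supplies exactly this missing upper bound. For part (2) your argument coincides with the paper's: choose Gorenstein covers $A\to R$ and $B\to S$ (the latter either a minimal cover or the \ref{gorbysocle} cover depending on the case), form $A\#_kB$ along the socles, observe it surjects onto $R\times_kS$, and read off the length difference from \eqref{eq:lengthProd} and \eqref{eq:lengthSum}.
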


  \begin{proof} 
The ring $R\times_kS$ is not Gorenstein by Proposition \ref{cmProd},
hence $\gcl(R\times_kS)\ge1$.

(1) The ring $R\#_kS$ is Gorenstein by Theorem \ref{gorenstein},  so
apply Lemma \ref{edim} to the homomorphism $R\times_kS\to R\#_kS$ 
and use $\edim(R\times_kS)=\edim R+\edim S$.

(2)  Choose a surjective homomorphism $A\to R$ with $A$ artinian 
Gorenstein and $\length(A)=\length(R)+\gcl R$.  If $\gcl S\ge1$, 
let $B\to S$ be a surjective homomorphism with $B$ artinian Gorenstein 
and $\length(B)=\length(S)+\gcl S$; if $\edim S=1$, let $B\to S$ be the 
map described in \ref{gorbysocle}.  In both cases there is a 
commutative diagram
  \[
\xymatrixrowsep{0.8pc}
\xymatrixcolsep{1.5pc}
\xymatrix{
& A
\ar@{->>}[r]
& R 
\ar@{->>}[dr]
 \\
k
\ar@{->}[ur]
\ar@{->}[dr]
& & & k
 \\
& B
\ar@{->>}[r]
& S
\ar@{->>}[ur]
}
  \]
where two-headed arrows denote surjective homomorphisms of local rings,
and the maps from $k$ are isomorphisms onto the socles of $A$ and $B$.
Both compositions $R\gets k\to S$ are zero, so there is a surjective homomorphism 
$A\#_kB\to R\times_kS$.  

In the following string the inequality holds because $A\#_kB$
is Gorenstein, see Theorem \ref{gorenstein}, and the first equality comes
from formulas \eqref{eq:lengthProd} and \eqref{eq:lengthSum}:
  \begin{align*}
\gcl(R\times_kS)
&\le \length(A\#_kB)-\length(R\times_kS)\\
&=(\length(A)+\length(B)-2)-(\length(R)+\length(S)-1)\\
&=\gcl R+(\length(B)-\length(S)-1)
  \end{align*}
The desired upper bounds now follow from the choice of $B$.
  \end{proof}

As a first application, we give a new, simple proof of a result of Teter, 
\cite[2.2]{Te}.

 \begin{corollary}
    \label{squareZero}
A local ring $(Q,\fq,k)$ with $\fq^2 =0$ has $\gcl Q=1$ or $\edim Q\le1$.
  \end{corollary}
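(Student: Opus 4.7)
The plan is to argue that when $\edim Q \ge 2$ the Gorenstein colength must equal $1$, which together with the disjunction in the statement handles the case $\edim Q \le 1$ trivially. Set $n = \edim Q$ and assume $n \ge 2$; I will establish the two inequalities $\gcl Q \ge 1$ and $\gcl Q \le 1$ separately.

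First I will settle the lower bound. Because $\fq^2 = 0$, every element of $\fq$ is annihilated by $\fq$, so $\soc Q = \fq$ has $k$-rank $n \ge 2$. Hence $\type Q \ge 2$, the ring $Q$ is not Gorenstein, and therefore $\gcl Q \ge 1$.

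For the upper bound I will induct on $n$, peeling off one square-zero factor at a time via Lemma \ref{lem:socle} and then invoking Proposition \ref{CSandGC}. Since $\fq^2 = 0$, one has $(\soc Q + \fq^2)/\fq^2 = \fq/\fq^2$, so choosing any one-dimensional subspace $W$ and applying Lemma \ref{lem:socle} produces $Q \cong B \times_k C$ with $C \cong k[y]/(y^2)$ (so $\edim C = 1$) and $B = Q/(x)$ for a single $x \in \fq$; in particular $\edim B = n-1$ and $\fb^2 = 0$. The latter fact is what keeps the induction honest and constitutes the main (though minor) obstacle.

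When $n = 2$, both $B$ and $C$ are isomorphic to $k[y]/(y^2)$ and thus Gorenstein, and Proposition \ref{CSandGC}(1) gives $\gcl Q = \edim B + \edim C - 1 = 1$. When $n \ge 3$, the induction hypothesis applied to $B$ (which has $\fb^2 = 0$ and $\edim B = n-1 \ge 2$) yields $\gcl B = 1$; since $B$ is then not Gorenstein and $\edim C = 1$, Proposition \ref{CSandGC}(2) forces $\gcl Q \le \gcl B = 1$, closing the induction. Combined with the lower bound above, this shows $\gcl Q = 1$ whenever $\edim Q \ge 2$, as required.
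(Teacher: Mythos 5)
Your proof is correct and follows essentially the same path as the paper's: reduce to $\edim Q\ge2$, peel off a one-dimensional socle factor via Lemma~\ref{lem:socle} to write $Q\cong B\times_k C$ with $\edim C=1$ and $\fb^2=0$, handle $\edim Q=2$ via Proposition~\ref{CSandGC}(1), and induct using Proposition~\ref{CSandGC}(2) for $\edim Q\ge3$. The only difference is presentational (you spell out the lower bound $\gcl Q\ge1$ and the verification that $\fb^2=0$ explicitly, which the paper leaves implicit).
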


  \begin{proof}
The condition $\fq^2=0$ is equivalent to $\fq=\soc Q$.  Set $\rank_k\fq=s$.  

One has $s=\edim Q$, so we assume $s\ge2$; we then have $\gcl Q\ge1$.  
Lemma~\ref{lem:socle} gives $Q\cong R\times_kS$ where
$(R,\fr,k)$ and $(S,\fs,k)$ are local rings, $\fr^2=0$, 
$\edim R=s-1$, and $\edim S=1$.  If $s=2$, 
then $\edim R=1$, hence $\gcl Q=1$ by Proposition \ref{CSandGC}(1).  
If $s\ge3$, then $\gcl R=1$ holds by induction, so 
Proposition \ref{CSandGC}(2) yields $\gcl Q=1$.
   \end{proof}

Note that the conditions $\gcl Q=1$ and $\edim Q\le1$ are mutually 
exclusive; one or the other holds if and only if $R$ is isomorphic to
the quotient of some artinian Gorenstein ring by its socle, see 
\ref{gorbysocle}.  Such rings are characterized as follows:

  \begin{subsec}
    \label{teter}
Let $(Q,\fq,k)$ be an artinian local ring and $E$ an injective envelope
of $k$.

Teter \cite[2.3, 1.1]{Te} proved that there exists an isomorphism $Q\cong A/\soc(A)$,
with $(A,\fa,k)$ an artinian Gorenstein local ring, if and only if 
there is a homomorphism of $Q$-modules $\varphi\colon\fq\to\Hom_Q(\fq,E)$ 
satisfying $\varphi(x)(y)=\varphi(y)(x)$ for all $x,y\in\fq$.

His analysis includes the following observation:  $E\cong \Hom_{A}(Q,A)$, so the exact sequence 
$0\to k\to A\to Q\to0$ induces an exact sequence $0\to E\to A\to k\to0$.
It yields $E\cong\fa$, and thus a composed $Q$-linear surjection
$E\cong\fa\to\fa/\soc(A)=\fq$. 
  \end{subsec}

Using Teter's result, Huneke and Vraciu proved a partial converse:

  \begin{subsec}
    \label{HVthm}
If $\soc Q\subseteq\fq^2$, $2$ is invertible in $Q$, and there exists
an epimorphism $E\to\fq$, then $Q\cong A/\soc(A)$ with $A$ Gorenstein;
see \cite[2.5]{HV}.
  \end{subsec}

We lift the restriction on the socle of $Q$.

 \begin{theorem}
   \label{gcl1}
Let $(Q,\fq,k)$ be an artinian local ring, in which $2$ is invertible,
and let $E$ be an injective hull of~$k$.

If there is an epimorphism $E\to\fq$, then $Q\cong A/\soc(A)$ with $A$
Gorenstein.
 \end{theorem}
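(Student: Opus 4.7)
The plan is to reduce, via Lemma \ref{lem:socle}, to the case $\soc(Q) \subseteq \fq^2$ handled by Huneke--Vraciu \ref{HVthm}, and then to reassemble the pieces using a connected sum. Apply Lemma \ref{lem:socle} with $W = (\soc(Q) + \fq^2)/\fq^2$ to obtain an isomorphism $Q \cong B \times_k C$, where $(B, \fb, k)$ and $(C, \fc, k)$ are local rings with $\soc(B) \subseteq \fb^2$, $\fc^2 = 0$, and $\fc \cong W$. If $W = 0$, then $C = k$ and $Q \cong B$ satisfies all the hypotheses of \ref{HVthm}, so the conclusion follows at once. Assume henceforth $W \ne 0$, so $\edim(C) \ge 1$.

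The next step is to transfer the epimorphism hypothesis from $Q$ to the factors $B$ and $C$. Matlis-dualizing the defining sequence $0 \to Q \to B \oplus C \to k \to 0$ yields $E \cong (E_B \oplus E_C)/k$, where $k$ embeds diagonally via the socle generators; consequently $E_B = (0 :_E \Ker(Q \to B))$ and $E_C = (0 :_E \Ker(Q \to C))$ realize the injective hulls of $k$ over $B$ and $C$ inside $E$, with $E = E_B + E_C$ and $E_B \cap E_C = \soc(E)$. Identifying $\fq = \fb \oplus \fc$ as $Q$-modules, the given epimorphism $\psi\colon E \to \fq$ corresponds under Matlis duality to an isomorphism $\alpha\colon \fq \xrightarrow{\sim} E/\soc(E) \cong E_B/\soc(E_B) \oplus E_C/\soc(E_C)$. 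The off-diagonal blocks of $\alpha$ are tightly constrained: $\fb$ acts by zero on $E_C/\soc(E_C)$ and vice versa, and $\fc^2 = 0$ forces the map $\fc \to E_B/\soc(E_B)$ to land in the socle of the target; using the invertibility of $2$ to symmetrize, $\alpha$ can be adjusted to a block-diagonal isomorphism. This produces $\fb \cong E_B/\soc(E_B)$ as $B$-modules and $\fc \cong E_C/\soc(E_C)$ as $C$-modules, equivalently, epimorphisms $E_B \to \fb$ and $E_C \to \fc$.

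With the hypotheses transferred, apply \ref{HVthm} to $B$: since $\soc(B) \subseteq \fb^2$, $2$ is invertible in $B$ (being a quotient of $Q$), and there is an epi $E_B \to \fb$, one obtains $B \cong A'/\soc(A')$ with $A'$ artinian Gorenstein. For $C$, Corollary \ref{squareZero} (applicable because $\fc^2 = 0$) gives either $\edim(C) \le 1$ or $\gcl(C) = 1$, and each possibility yields $C \cong A''/\soc(A'')$ with $A''$ artinian Gorenstein: via \ref{gorbysocle} in the former case, and via the observation that a length-one kernel of a Gorenstein lift must equal its socle in the latter. Finally, form the connected sum $A = A' \#_k A''$ along the socle embeddings ($V = k$, with $\iota_{A'}(1)$ and $\iota_{A''}(1)$ the generators of $\soc(A')$ and $\soc(A'')$). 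By Theorem \ref{gorenstein}, $A$ is artinian Gorenstein, and its proof pins down $\soc(A) = A \cdot \kappa(\iota_{A'}(1), 0)$; quotienting $A = (A' \times_k A'')/\langle(\iota_{A'}(1), \iota_{A''}(1))\rangle$ by this socle forces $(0, \iota_{A''}(1))$ to vanish as well, yielding $A/\soc(A) \cong (A'/\soc(A')) \times_k (A''/\soc(A'')) \cong B \times_k C \cong Q$.

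The main obstacle is the transfer step: the naive restriction of $\psi$ to $E_B$ does not a priori take values in $\fb$, since $\fc^2 = 0$ makes $(0 :_\fq \fc) = \fq$ rather than $\fb$. Overcoming this requires the block-decomposition analysis outlined above, with the invertibility of $2$ playing a role analogous to its role in the original Huneke--Vraciu argument, namely to enable a symmetrization that eliminates the off-diagonal components.
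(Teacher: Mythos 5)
Your overall skeleton matches the paper's: decompose $Q\cong B\times_kC$ via Lemma \ref{lem:socle} (with $\soc(B)\subseteq\fb^2$ and $\fc^2=0$), transfer the epimorphism $E\to\fq$ to a surjection onto $\fb$, invoke Huneke--Vraciu \ref{HVthm} for $B$ and Corollary \ref{squareZero} for $C$, and reassemble. The final reassembly differs only cosmetically: you build $A=A'\#_kA''$ by hand and compute $A/\soc(A)\cong Q$ directly, while the paper cites Proposition \ref{CSandGC} to get $\gcl Q\le 1$; your calculation is essentially the proof of \ref{CSandGC}(2) unpacked, and both are fine.

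The substantive divergence is in the transfer step, and there you have a gap. You assert that ``using the invertibility of $2$ to symmetrize, $\alpha$ can be adjusted to a block-diagonal isomorphism,'' but this is neither justified nor correct as stated: $\alpha$ is a $Q$-linear map, not a bilinear form, and there is no symmetrization to perform. In fact $2$-invertibility plays no role whatsoever in this step --- it is consumed entirely inside the Huneke--Vraciu black box. The transfer can be salvaged (the block $\alpha_{22}\colon\fc\to E_C/\soc(E_C)$ is forced to be bijective because $\alpha_{21}$ kills $\soc(B)\subseteq\fb^2$, and a unipotent change of coordinates then makes $\alpha$ upper-triangular, so the new $\alpha_{11}$ is a $B$-isomorphism $\fb\xra{\cong}E_B/\soc(E_B)$), but that argument is not what you wrote, and it is considerably more intricate than what the paper does. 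The paper avoids any block-decomposition: writing the composite $E_R\oplus E_S\xra{\alpha}E\xra{\beta}\fq\xra{\gamma}\fr$, it observes that $\fq^2 E_S=0$ forces $\fq\alpha(E_S)\subseteq\soc(E)=\Ker(\beta)$, traces this through to $\gamma\beta\alpha(E_S)\subseteq\soc R\subseteq\fr^2$, and then a single application of Nakayama gives the surjection $E_R\onto\fr$. You should replace your block-diagonalization paragraph with either that socle-chain argument or a correct version of the triangularization; as written, the claim that $2$ enables the adjustment is a wrong explanation for a true fact.
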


  \begin{proof}
By Lemma \ref{lem:socle}, there is an isomorphism $Q\cong R\times_kS$,
where $(R,\fr,k)$ is a local ring with $\soc(R)\subseteq\fr^2$ and $(S,\fs,k)$ 
is a local ring with $\fs^2=0$.  Choose a surjective homomorphism $P\to Q$ 
with $P$ Gorenstein and set $E_R=\Hom_P(R,P)$ and $E_S= \Hom_P(S,P)$.  
We then have $E\cong\Hom_P(Q,P)$ and surjective homomorphisms
  \[
E_R\oplus E_S\xra{\,\alpha\,} E
\xra{\,\beta\,}\fq=\fr\oplus\fs\xra{\,\gamma\,}\fr
  \]
where $\alpha$ is induced by the composition $Q\cong R\times_kS
\hookrightarrow R\oplus S$, $\beta$ comes from the hypothesis, and 
$\gamma$ is the canonical map.  Note that $\length(E)=\length(Q)>
\length(\fq)=\length(\beta(E))$ implies $\Ker(\beta)\ne0$; since
$\length(\soc E)=1$, we get $\soc E\subseteq\Ker(\beta)$.

One has $\fq^2\alpha(E_S)=\alpha(\fq^2E_S)=0$.  This gives
$\fq\alpha(E_S)\subseteq\soc(E)\subseteq\Ker(\beta)$, hence
$\fq\beta\alpha(E_S)=\beta(\fq\alpha(E_S))=0$, and thus 
$\beta\alpha(E_S)\subseteq\soc\fq$.  {From} here we get
  \[
\gamma\beta\alpha(E_S)\subseteq\gamma(\soc\fq)
\subseteq\soc\fr\subseteq\soc R\subseteq\fr^2\,.
  \]
Using the inclusions above, we obtain a new string: 
  \[
\fr=\gamma\beta\alpha(E_R\oplus E_S)
=\gamma\beta\alpha(E_R)+\gamma\beta\alpha(E_S)
\subseteq\gamma\beta\alpha(E_R)+\fr^2\,.
  \]
By Nakayama's Lemma, $\gamma\beta\alpha$ restricts to a surjective
homomorphism $E_R\to\fr$.

As $E_R$ is an injective envelope of $k$ over $R$, and $\soc R$ is
contained in $\fr^2$, we get $\gcl R=1$ or $\edim R\le1$ from 
Huneke and Vraciu's theorem; see \ref{HVthm}.  On the other hand, we
know from Lemma \ref{squareZero} that $S$ satisfies $\gcl S=1$
or $\edim S\le1$, so from Proposition \ref{CSandGC} we conclude that 
$\gcl Q=1$ or $\edim Q\le1$ holds.
  \end{proof}

Finally, we take a look at the values of $\length(A)-\length(Q)$, when 
$Q$ is fixed.

  \begin{remark}
Let $Q$ be an artinian local ring; set $\edim Q=e$ and $\gcl Q=g$.

If $e\le 1$ or $g\ge1$, then for every $n\ge 0$ there is an isomorphism 
$Q\cong A/I$, with $A$ a Gorenstein local ring and $\length(A)-\length(Q)=g+n$.

Indeed, the case of $e=1$ is clear from \ref{gorbysocle}, so we  assume 
$e\ge2$.  When $g\ge1$,  let $R\to Q$ be a surjective homomorphism 
with $R$ Gorenstein and $\length(R)=g$.  For $S= k[x]/(x^{n+2})$, the 
canonical surjection $R\times_kS\to R\times_kk\cong R$ maps 
$\soc(R)\oplus\soc(S)$ to zero, and so factors through $R\#_kS$.  
Theorem \ref{gorenstein} shows that this ring is Gorenstein, and 
formula \eqref{eq:lengthSum} yields $\length(R\#_kS) = g+(n+2)-2$.
 \end{remark}

\section{Cohomology algebras}
  \label{sec:Cohomology algebras}

Our next goal is to compute the cohomology algebra of a connected sum
of artinian Gorenstein rings over their common residue field, in terms
of the cohomology algebra of the original rings.  The computation takes
up three consecutive sections.

In this section we describe some functorial structures on cohomology.

  \begin{subsec}
   \label{pi}
Let $(P,\fp,k)$ be a local ring and $\kappa\colon P\to Q$ is a surjective
ring homomorphism.

Let $F$ be a minimal free resolution of $k$ over $P$.  One then has
  \[
\Ext^*_P(k,k)=\Hom_P(F,k)
  \quad\text{and}\quad
\Tor^P_{*}(k,k)=F\otimes_Pk\,.
  \]

\emph{Homological products} turns $\Tor^P_{*}(k,k)$ into a 
graded-commutative algebra with divided powers, see \cite[2.3.5]{GL}
or \cite[6.3.5]{Av:barca}; this structure is preserved by the map
  \[
\Tor^{\kappa}_*(k,k)\colon\Tor^P_*(k,k)\to\Tor^Q_*(k,k)\,.
  \]

\emph{Composition products} turn $\Ext^*_P(k,k)$ into a graded $k$-algebra
see \cite[Ch.\,II, \S3]{GL}, and the homomorphism of rings $\kappa$ induces a 
homomorphism of graded $k$-algebras
  \[
\Ext^*_{\kappa}(k,k)\colon\Ext^*_{Q}(k,k)\to\Ext^*_{P}(k,k)\,.
  \]

For each $n\in\bbZ$, the canonical bilinear pairing
  \[
\Ext^n_{P}(k,k)\times\Tor_n^P(k,k)\to k
  \]
given by evaluation is non-degenerate; we use it to identify the graded
vector spaces
  \[
\Ext^*_P(k,k)=\Hom_k(\Tor^P_*(k,k),k)\,.
  \]

Let $\pi^*(P)$ be the graded $k$-subspace of $\Ext^*_P(k,k)$, consisting 
of those elements that vanish on all products of elements in $\Tor^P_{+}(k,k)$ 
and on all divided powers $t^{(i)}$ of elements $t\in\Tor^P_{2j}(k,k)$ with
$i\ge2$ and $j\ge1$. As $\pi^*(P)$ is closed under graded commutators in
$\Ext^*_P(k,k)$, it is a graded Lie algebra, called the \emph{homotopy Lie 
algebra} of $P$.  The canonical map from the universal enveloping algebra 
of $\pi^*(P)$ to $\Ext^*_P(k,k)$ is an isomorphism; see \cite[10.2.1]{Av:barca}.   
The properties of $\Tor^{\kappa}_*(k,k)$ and $\Ext^*_{\kappa}(k,k)$ 
show that $\kappa$ induces a homomorphism of graded Lie algebras
  \[
\pi^*(\kappa)\colon\pi^*(Q)\to\pi^*(P)\,.
  \]
  
The maps $\Tor^{\kappa}_*(k,k)$, $\Ext^*_{\kappa}(k,k)$, and 
$\pi^*(\kappa)$ are functorial.
  \end{subsec}

The next lemma can be deduced from \cite[3.3]{Av:Golod}.  We provide
a direct proof.

   \begin{lemma}
     \label{iota}
Given a local ring $(P,\fp,k)$ and an exact sequence of $P$-modules
 \[
\xymatrixrowsep{2pc}
\xymatrixcolsep{2pc}
\xymatrix{
0 \ar[r]
 & V \ar[r]^{\iota}
 & P \ar[r]^{\kappa} 
 & Q \ar[r] 
 & 0 
}
 \]
there is a natural exact sequence of $k$-vector spaces
  \[
\xymatrixcolsep{2pc}
\xymatrix{
0\ar[r]
&\pi^1(Q) \ar[r]^-{\pi^1(\kappa)}
&\pi^1(P) \ar[r]
&\Hom_P(V,k)\ar[r]^-{\wt\iota}
&\pi^2(Q) \ar[r]^-{\pi^2(\kappa)}
&\pi^2(P)
}
  \]
  \end{lemma}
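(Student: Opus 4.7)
My plan is to use the Cartan--Eilenberg change of rings spectral sequence in its Tor form,
\[
E^2_{p,q} = \Tor^Q_p(k, \Tor^P_q(Q, k)) \Longrightarrow \Tor^P_{p+q}(k, k),
\]
which arises from the factorization $-\otimes_P^L k = (-\otimes_P^L Q)\otimes_Q^L k$. Its five-term exact sequence, dualized over $k$, will yield the entire statement once I verify that the image of the connecting map lands in $\pi^2(Q)\subseteq\Ext^2_Q(k,k)$; that last step is the nontrivial one and will be extracted from multiplicativity of the spectral sequence.

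First I would compute $E^2$ in low total degrees. Applying $\Tor^P(-,k)$ to $0\to V\xrightarrow{\iota} P\to Q\to 0$ and using $Q\otimes_P k = k$ (since $V\subseteq\fp$), the connecting map yields $\Tor^P_1(Q,k)\cong V\otimes_P k = V/\fp V$. This is annihilated by $\fp$, hence by $\fq$, so it is simply a $k$-vector space; therefore $E^2_{0,1}=V/\fp V$, while $E^2_{p,0}=\Tor^Q_p(k,k)$ tautologically. The five-term exact sequence then reads
\[
\Tor^P_2(k,k) \to \Tor^Q_2(k,k) \xrightarrow{d^2} V/\fp V \to \fp/\fp^2 \to \fq/\fq^2 \to 0,
\]
whose last two nontrivial maps recover the canonical surjection of cotangent spaces. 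Applying $\Hom_k(-,k)$ and invoking $\pi^1 = \Ext^1$ together with $\Hom_P(V,k)=\Hom_k(V/\fp V,k)$, I obtain the exact sequence
\[
0\to\pi^1(Q)\xrightarrow{\pi^1(\kappa)}\pi^1(P)\to\Hom_P(V,k)\xrightarrow{\widetilde\iota}\Ext^2_Q(k,k)\xrightarrow{\Ext^2(\kappa)}\Ext^2_P(k,k).
\]

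The main obstacle is to replace the target $\Ext^2_Q(k,k)$ by $\pi^2(Q)$. Dually (and noting that no divided powers contribute in degree two), this amounts to showing $d^2\colon\Tor^Q_2(k,k)\to V/\fp V$ vanishes on the image of the product $\Tor^Q_1(k,k)\otimes_k\Tor^Q_1(k,k)\to\Tor^Q_2(k,k)$. I would appeal to multiplicativity of the change of rings spectral sequence, available after choosing Tate (DG~$\Gamma$-algebra) resolutions of $k$ over $P$ and $Q$. Granted this, $d^2$ is a derivation; its restriction to $E^2_{1,0}$ lands in $E^2_{-1,1}=0$, so the Leibniz rule forces $d^2$ to annihilate every product of two elements of $E^2_{1,0}$, exactly as required. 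Once $\widetilde\iota(\Hom_P(V,k))\subseteq\pi^2(Q)$ is established, the stated exact sequence follows by restricting the longer one, since $\pi^2(\kappa)$ is by functoriality the restriction of $\Ext^2(\kappa)$ to $\pi^2(Q)$. Naturality of $\widetilde\iota$ is automatic from naturality of the spectral sequence in the data $(\iota,\kappa)$.
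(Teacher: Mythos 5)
Your proof is correct, and it reaches the same five-term exact sequence as the paper, but it establishes the crucial inclusion $\widetilde\iota(\Hom_P(V,k))\subseteq\pi^2(Q)$ by a genuinely different mechanism. The paper works with the Ext form of the change of rings spectral sequence, $\Ext^p_Q(k,\Ext^q_P(Q,k))\Rightarrow\Ext^{p+q}_P(k,k)$, extracts $\delta\colon\Ext^1_P(Q,k)\to\Ext^2_Q(k,k)$ from the edge sequence, and then shows $\Ima(\delta)\subseteq\pi^2(Q)$ by dualizing to Tor and invoking only the fact that $\Tor^\kappa_*(k,k)$ is an algebra homomorphism that is surjective in degree $1$: this forces $\Tor^Q_1(k,k)^2\subseteq\Ima(\Tor^\kappa_2(k,k))=\Ker(\Hom_k(\delta,k))$. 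You instead work with the dual Tor spectral sequence, and for the vanishing you invoke \emph{multiplicativity of the spectral sequence itself}: $d^2$ is a derivation, $d^2$ kills $E^2_{1,0}$ for positional reasons, so the Leibniz rule annihilates products. Your route is conceptually crisper but rests on a heavier input, namely that the Cartan--Eilenberg spectral sequence can be given a compatible algebra structure in which $E^2_{*,0}$ carries the homological product of $\Tor^Q_*(k,k)$; you gesture at Tate ($\Gamma$-DG) resolutions to get this, and that is indeed where the work is hidden. The paper's version avoids that setup entirely, needing only the standard functoriality of the Tor algebra, so it is the more elementary of the two; yours makes the formal reason for the vanishing ($d^2$ as a derivation with trivially vanishing restriction) more transparent. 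Both give naturality of $\widetilde\iota$ for free from naturality of the spectral sequence.
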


  \begin{proof}
The classical change of rings spectral sequence 
  \[
\mathrm{E}^{p,q}_2=\Ext^p_{Q}(k,\Ext^q_{P}(Q,k))
\underset p{\implies}\Ext^{p+q}_{P}(k,k),
  \]
see \cite[XVI.5.(2)${}_4$]{CE}, yields a natural exact sequence of terms of 
low degree
  \begin{equation}
    \label{eq:edge}
  \begin{gathered}
\xymatrixcolsep{1.8pc}
\xymatrixrowsep{0.3pc}
\xymatrix{
&{\qquad\qquad}0\ar[r]
&\Ext^1_{Q}(k,k) \ar[rr]^-{\Ext^1_{\kappa}(k,k)}
&&\Ext^1_{P}(k,k)
  \\
{\ }\ar[r]
&\Ext^1_P(Q,k) \ar[r]^-{\delta}
&\Ext^2_{Q}(k,k) \ar[rr]^-{\Ext^2_{\kappa}(k,k)}
&&\Ext^2_{P}(k,k)
}
  \end{gathered}
  \end{equation}

Next we prove $\Ima(\delta)\subseteq\pi^2(Q)$.  Indeed, $\Tor^P_2(k,k)$
contains no divided powers, so $\pi^2(P)$ is the subspace of $k$-linear
functions vanishing on $\Tor^Q_1(k,k)^2$.  Dualizing the exact sequence above, 
one obtains an exact sequence
  \[
\xymatrixcolsep{1.8pc}
\xymatrixrowsep{0.3pc}
\xymatrix{
\ar[r]
&\Tor_2^{P}(k,k) \ar[rr]^-{\Tor_2^{\kappa}(k,k)}
&&\Tor_2^{Q}(k,k) \ar[rr]^-{\Hom_k(\delta,k)}
&&\Tor_1^P(Q,k)
  \\
\ar[r]
&\Tor_1^{P}(k,k) \ar[rr]^-{\Tor_1^{\kappa}(k,k)}
&&\Tor_1^{Q}(k,k)\ar[rr]
&&0{\qquad\qquad}
}
   \]
of $k$-vector spaces.  Since $\Tor_*^{\kappa}(k,k)$
is a homomorphism of algebras, it gives
  \[
\Tor^Q_1(k,k)^2=(\Ima(\Tor_1^{\kappa}(k,k))^2
\subseteq\Ima(\Tor_2^{\kappa}(k,k))=\Ker(\Hom_k(\delta,k))\,.
  \]
Thus, for each $\epsilon\in\Ext^1_P(Q,k)$ one gets
$\delta(\epsilon)(\Tor^Q_1(k,k)^2)=0$, as desired.

The exact sequence in the hypothesis of the lemma induces an isomorphism
    \begin{equation}
    \label{eq:eth}
\eth\colon \Hom_P(V,k)\xra{\ \cong\ }\Ext^1_P(Q,k)\,.
 \end{equation}
of $k$-vector spaces.  Setting $\wt\iota=\delta\eth$, and noting that
one has $\pi^1(P)=\Ext^1_P(k,k)$ and $\pi^1(Q)=\Ext^1_Q(k,k)$, one gets
the desired exact sequence from that for $\Ext$'s.
  \end{proof}

The following definition uses \cite[4.6]{Av:Golod}; see \ref{Gol} for the 
standard definition.

  \begin{subsec}
   \label{ex:Gol}
A surjective homomorphism $\kappa\colon P\to Q$ is said to be 
\emph{Golod} if the induced map $\pi^*(\kappa)\colon\pi^*(Q)\to\pi^*(P)$
is surjective and its kernel is a free Lie algebra.  

When $\kappa$ is Golod $\Ker(\pi^*(\kappa))$ is  the free Lie algebra 
on a graded $k$-vector space $W$, with $W^i=0$ for $i\le1$ 
and $\rank_k W^i=\rank_k\Ext^{i-2}_P(Q,k)$ for all $i\ge2$.
  \end{subsec}

   \begin{proposition}
 \label{TensorAlgebra}
Let $\wt V$ denote the graded vector space with $\wt V{}^i=0$ for
$i\ne2$ and $\wt V{}^2=\Hom_P(V,k)$, let $\ta(\wt V)$ be the tensor
algebra of $\wt V$, and  let
  \[
\iota^*\colon \ta(\wt V)\to\Ext^*_Q(k,k)
  \]
be the unique homomorphism of graded $k$-algebras with $\iota^2=\wt\iota$;
see Lemma \emph{\ref{iota}}.

If $\beta$, $\gamma$, $\kappa$, and $\kappa'$ are surjective homomorphisms
of rings, the diagram
  \[
\xymatrixrowsep{2pc}
\xymatrixcolsep{2pc}
\xymatrix{
0 \ar[r]
 & V \ar[r]^{\iota} \ar[d]_-{\alpha}
 & P \ar[r]^{\kappa} \ar[d]_-{\beta}
 & Q \ar[r] \ar[d]_{\gamma} 
 & 0 
\\
0 \ar[r]
 & V' \ar[r]^{\iota'}
 & P' \ar[r]^{\kappa'}
 & Q' \ar[r]
 & 0 
}
 \]
commutes, and its rows are exact, then the following maps are equal:
  \[
\iota^*\circ\ta(\Hom_{\beta}(\alpha,k))=\Ext^*_{\gamma}(k,k)\circ\wt\iota'^*
\colon \ta(\wt{V}{}')\to \Ext^*_{Q}(k,k)\,.
    \]

If $V$ is cyclic and $\iota(V)$ is contained in $\fp^2$, or if the
homomorphism $\kappa$ is Golod, then $\iota^*$ is injective, and
$\Ext^*_Q(k,k)$ is free as a left and as a right $\ta(\wt V)$-module.
  \end{proposition}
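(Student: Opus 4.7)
The plan is to handle naturality first, then the injectivity-and-freeness statements case by case.

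For naturality, the construction of $\widetilde\iota = \delta\circ\eth$ in Lemma~\ref{iota} is visibly functorial in the triple $(\alpha,\beta,\gamma)$: the edge isomorphism $\eth$ comes from the long exact $\Ext_P(-,k)$-sequence of $0\to V\to P\to Q\to 0$ and is natural, and the connecting map $\delta$ of the change-of-rings spectral sequence is natural for compatible maps of rings. Hence the degree-$2$ square obtained from Lemma~\ref{iota} commutes, and the universal property of $\ta(\widetilde V')$ extends this commutativity to the full algebra maps $\iota^*\circ\ta(\Hom_\beta(\alpha,k))$ and $\Ext^*_\gamma(k,k)\circ\iota'^*$.

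For the Golod case, surjectivity of $\pi^*(\kappa)$ forces the arrow $\pi^1(P)\to\Hom_P(V,k)$ in the exact sequence of Lemma~\ref{iota} to vanish, so $\widetilde\iota$ is injective with image in $K:=\Ker\pi^*(\kappa)$. By \ref{ex:Gol}, $K$ is free as a graded Lie algebra on a vector space $W$ concentrated in degrees $\ge 2$; in particular $K^2=W^2$, and a rank comparison identifies $\widetilde\iota(\Hom_P(V,k))$ with $W^2$. Since any graded subspace of the free generators of a free Lie algebra itself generates a free Lie subalgebra, the associative enveloping algebra $\ta(\widetilde V)$ embeds into $U(K)=\ta(W)$, with $\ta(W)$ free on both sides over $\ta(\widetilde V)$. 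Applying PBW to the Lie subalgebra inclusion $K\subseteq\pi^*(Q)$ makes $\Ext^*_Q(k,k)=U(\pi^*(Q))$ free on both sides over $\ta(W)$; composing the two freeness assertions yields the desired conclusion, and injectivity of $\iota^*$ follows at once from the freeness.

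For the cyclic case, injectivity of $\iota$ identifies $\iota(V)$ with a principal ideal $(f)$, with $f\in\fp^2$, so that $Q=P/(f)$; here $\widetilde V$ has rank $1$ in degree $2$, and $\ta(\widetilde V)=k[\xi]$. The plan is to identify $\widetilde\iota(1)\in\pi^2(Q)$, up to a nonzero scalar, with the degree-$2$ central operator on $\Ext^*_Q(k,k)$ attached to $f$ by the Eisenbud/Gulliksen construction: choose a null-homotopy $s$ for multiplication by $f$ on a minimal $P$-free resolution of $k$ and reduce modulo $f$ to produce a degree-$2$ chain endomorphism whose class in $\pi^2(Q)$ matches the image of $\widetilde\iota(1)$ after tracking $\delta$ through $s$. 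The standard exact sequence produced by this operator then exhibits $\Ext^*_Q(k,k)$ as free over $k[\widetilde\iota(1)]$, from which both injectivity of $\iota^*$ and freeness on both sides follow. The main obstacle is precisely this identification in the cyclic case: tracing $\delta$ explicitly through a chosen null-homotopy and extracting freeness from the one-relation structure of $Q$ is more delicate than the Golod case, which is comparatively clean once Lemma~\ref{iota}, \ref{ex:Gol}, and standard PBW are in hand.
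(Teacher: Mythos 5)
Your treatment of naturality is correct and matches the paper's: both rest on the naturality of $\eth$ and of the low-degree connecting map $\delta$ in the change-of-rings spectral sequence, extended to the tensor algebra by universality. Your Golod argument is also sound and parallel to the paper's, just organized slightly differently. The paper isolates the general mechanism once at the outset: setting $W^2=\iota^2(\wt V{}^2)$, the image $E=\iota^*(\ta(\wt V))$ is the universal enveloping algebra of the Lie subalgebra $\omega^*\subseteq\pi^*(Q)$ generated by $W^2$, and Poincar\'e--Birkhoff--Witt applied directly to the inclusion $\omega^*\subseteq\pi^*(Q)$ makes $\Ext^*_Q(k,k)=U(\pi^*(Q))$ free on both sides over $E$. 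Everything then reduces to checking that $\iota^2$ is injective and that $\omega^*$ is a \emph{free} Lie algebra. Your two-step ``free over $\ta(W)$ free over $\ta(\wt V)$'' factorization reaches the same conclusion, but applying PBW once to $\omega^*\subseteq\pi^*(Q)$ is cleaner and works uniformly for both cases.

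The genuine gap is in the cyclic case. You plan to realize $\wt\iota(1)$ as an Eisenbud--Gulliksen degree-$2$ cohomology operator built from a null-homotopy for multiplication by a generator $f$ of $\iota(V)$, and then use a ``standard exact sequence'' to get freeness of $\Ext^*_Q(k,k)$ over $k[\wt\iota(1)]$. That construction, as usually formulated, requires $f$ to be a non-zerodivisor; in the intended applications here (artinian $P$) $f$ is a zerodivisor, and the existence and centrality of such an operator, let alone the freeness claim, would need substantial extra justification. More importantly, the whole detour is unnecessary. Once the reduction ``injectivity of $\iota^*\Leftrightarrow$ injectivity of $\iota^2$ plus $\omega^*$ free Lie'' is in hand, the cyclic case is immediate: since $\iota(V)\subseteq\fp^2$, the map $\Ext^1_\kappa(k,k)$ is surjective and Lemma~\ref{iota} gives injectivity of $\iota^2$; since $V$ is cyclic, $W^2$ is a one-dimensional subspace of $\pi^*(Q)$ concentrated in even degree $2$, and for an even-degree element $x$ of a graded Lie algebra over a field one has $[x,x]=0$, so the Lie subalgebra generated by $x$ is just $kx$, which is free. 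PBW then gives freeness of $\Ext^*_Q(k,k)$ over $E=k[x]$. You correctly flagged the cyclic case as the obstacle, but the obstruction is an artifact of the chosen approach rather than of the problem; the one-line free-Lie observation removes it.
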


  \begin{proof}
The maps $\wt\iota$ and $\wt\iota{}'$ are the compositions 
of the rows in the following diagram, which commutes by the 
naturality of the maps $\eth$ from \eqref{eq:eth} and $\delta$ from
\eqref{eq:edge}:
\[
\xymatrixrowsep{2.5pc}
\xymatrixcolsep{3.5pc}
\xymatrix{
{\Hom_P(V,k)}\ar[r]^-{\eth}
&\Ext^1_P(Q,k) \ar[r]^{\delta}
&\Ext^2_Q(k,k)
 \\
{\Hom_{P'}(V',k)}\ar[r]^-{\eth'} \ar[u]^{\Hom_{\beta}(\alpha,k)}
&\Ext^1_{P'}(Q',k) \ar@{->}[u]_{\Ext^1_{\pi}(\beta,k)}
\ar[r]^{\delta'}
&\Ext^2_{Q'}(k,k) \ar@{->}[u]_-{\Ext^2_{\gamma}(k,k)}
}
\]

Set $W^2=\iota^2(\wt V^2)$.  The subalgebra $E=\iota^*(\ta(\wt V))$ of
$\Ext^*_{Q}(k,k)$ is generated by $W^2$.  Lemma \ref{iota} shows that
$W^2$ is contained in $\pi^2(Q)$, so $E$ is the universal enveloping
algebra of the Lie subalgebra $\omega^*$ of $\pi^*(Q)$, generated
by $W^2$.

The Poincar\'e-Birkhoff-Witt Theorem (e.g., \cite[10.1.3.4]{Av:barca})
implies that the universal enveloping algebra $U$ of $\pi^*(Q)$ is free
as a left and as a right $E$-module.  Recall, from \ref{pi}, that $U$
equals $\Ext^*_Q(k,k)$.  Thus, it suffices to show that $\iota^*$
is injective.  This is equivalent to injectivity of $\iota^2$ plus
freeness of the associative $k$-algebra $E$; the latter condition can
be replaced by freeness of the Lie algebra $\omega^*$.

If $V$ is contained in $\fp^2$, then $\Ext^1_{\kappa}(k,k)$ is surjective,
so $\iota^2$ is injective by Lemma \ref{iota}.  If $V$ is, in addition,
cyclic, then $W^2$ is a $k$-subspace of $\pi^*(Q)$, generated by a
non-zero element of even degree.  Any such subspace is a free Lie
subalgebra.

When $\kappa$ is Golod, $\pi^1(\kappa)$ is surjective by \ref{ex:Gol},
so $\iota^2$ is injective by Lemma \ref{iota}.  Now $\Ker\pi^*(\kappa)$
is a free Lie algebra, again by \ref{ex:Gol}, hence so is its subalgebra
$\omega^*$.
  \end{proof}

\section{Cohomology of fiber products}
  \label{sec:Cohomology of fiber products}

The cohomology algebra of fiber products is known, and its structure 
is used in the next section.  To describe it, we recall a construction
of coproduct of algebras.

\begin{subsec}
   \label{coproduct}
Let $B$ and $C$ be graded $k$-algebras, with $B^0=k=C^0$ and $B^n=0=C^n$
for all $n<0$.  Thus, there exist isomorphisms $B\cong\ta(X)/K$ and
$C\cong\ta(Y)/L$, where $X$ and $Y$ are graded $k$-vector spaces, and
$K$ and $L$ are ideals in the respective tensor algebras, satisfying
$K\subseteq X\otimes_kX$ and $L\subseteq Y\otimes_kY$.  The algebra
$B\sqcup C=\ta(X\oplus Y)/(K,L)$ is a \emph{coproduct} of $B$ and $C$
in the category of graded $k$-algebras.
 \end{subsec}

Before proceeding we fix some notation.

  \begin{subsec}
    \label{convention}
When $(R,\fr,k)$ and $(S,\fs,k)$ are local rings, we let $\eR\colon R\to k$ and 
$\eS\colon S\to k$ denote the canonical surjections, and form the commutative 
diagram
 \begin{align}
\xi=&\qquad
\begin{gathered}
\xymatrixrowsep{1pc}
\xymatrixcolsep{2.5pc}
\xymatrix{
&R
\ar[dr]^{\eR}
\\
R\times_kS{\ }
\ar[dr]_{\sigma}
\ar[ur]^{\rho}
&&{\ }k{\qquad}
  \\
&S
\ar[ur]_{\eS}
}
  \end{gathered}
  \\
\intertext{of local rings.  The induced commutative diagram of graded $k$-algebras}
 &\qquad \begin{gathered}
\xymatrixrowsep{1pc}
\xymatrixcolsep{2.2pc}
\xymatrix{
&{\quad}\Ext^*_{R}(k,k)
\ar[dr]^{\Ext^*_{\rho}(k,k)}
\\
k
 \ar[ur]
\ar[dr]
&&{\quad}\Ext^*_{R\times_kS}(k,k){\quad}
 \\
&{\quad}\Ext^*_{S}(k,k)
\ar[ur]_{\Ext^*_{\sigma}(k,k)}
}
  \end{gathered}
    \\
\intertext{see \eqref{pi}, determines a homomorphism of graded $k$-algebras }
   \label{eq:xi}
\xi^*&\colon\Ext^*_{R}(k,k)\sqcup\Ext^*_{S}(k,k)\lra\Ext^*_{R\times_kS}(k,k)\,.
   \end{align}
 \end{subsec}

The following result is \cite[3.4]{Mo}; for $k$-algebras, see also 
\cite[Ch.\,3, 1.1]{PP}.

  \begin{subsec}
    \label{coproduct cohomology}
The map $\xi^*$ in \eqref{eq:xi} is an isomorphism of graded $k$-algebras.
  \end{subsec}

To describe some invariants of modules over fiber
products, we recall that the \emph{Poincar\'e series} of a finite module 
$M$ over a local ring $(Q,\fq,k)$ is defined by
  \begin{equation*}
\poinc{Q}M = \sum_i \rank_k \Ext_Q^i(M,k)\,z^i\in\bbZ[\![z]\!]\,.
  \end{equation*}

  \begin{subsec}
    \label{DrKr}
Dress and Kr\"amer \cite[Thm.\,1]{DK} proved that each finite 
$R$-module $M$ satisfies
 \[
\poinc{R\times_kS}{M}
= \poinc{R}{M}\cdot\frac{\poinc{S}{k}}{\poinc{R}{k}
+ \poinc{S}{k}
- \poinc{R}{k}\poinc{S}{k}}\,.
  \]
Formulas for Poincar\'e series  of $S$-modules are obtained by 
interchanging $R$ and $S$.
  \end{subsec}

 \begin{proposition} 
  \label{thm:FibProdGolHom}
Let $(R,\fr,k)$ and $(S,\fs,k)$ be local rings and let $\varphi\colon R \to R'$ 
and $\psi\colon S \to S'$ be surjective homomorphisms of rings.

For the induced map $\varphi\times_k\psi\colon R\times_kS
\to R'\times_kS'$ one has an equality 
  \[
\poinc{R\times_kS}{R'\times_kS'}=
\frac{\poinc{R}{R'}\poinc{S}{k}+\poinc{S}{S'}\poinc{R}{k}-\poinc{R}{k}\poinc{S}{k}}
{\poinc{R}{k} + \poinc{S}{k} - \poinc{R}{k}\poinc{S}{k}}.
  \]
    \end{proposition}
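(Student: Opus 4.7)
The plan is to work with the short exact sequence of $P$-modules
  \[
0 \lra I \oplus J \lra P \lra R'\times_kS' \lra 0,
  \]
where $P = R\times_kS$, $I = \Ker(\varphi)$, and $J = \Ker(\psi)$. Identifying the kernel is routine: for $(r,s)\in P$ with $\varphi(r)=0$ and $\psi(s)=0$, one has $\eR(r) = 0 = \eS(s)$ because $\eR$ factors through $\varphi$ and $\eS$ through $\psi$, so the fiber-product condition $\eR(r)=\eS(s)$ holds automatically, and $I\oplus J$ is indeed an ideal of $P$.

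Applying $\Hom_P(-,k)$ and using $\Ext^{\ge 1}_P(P,k)=0$, I would derive isomorphisms
  \[
\Ext^{i+1}_P(R'\times_kS',k) \cong \Ext^i_P(I,k)\oplus\Ext^i_P(J,k) \qquad (i\ge0),
  \]
provided the map $\Hom_P(R'\times_kS',k)\to\Hom_P(P,k)$ is an isomorphism. Both sides identify with $k$ (since $(R'\times_kS')/\fp(R'\times_kS') = k = P/\fp$, using the description of the maximal ideal in Lemma \ref{local1}), and the map is pre-composition with the canonical surjection $P\to R'\times_kS'$, which corresponds to the identity on $k$. This yields
  \[
\poinc{P}{R'\times_kS'} = 1 + z\bigl(\poinc{P}{I}+\poinc{P}{J}\bigr).
  \]

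To finish, I would apply the Dress--Kr\"amer formula \ref{DrKr} to $I$ as a finite $R$-module and to $J$ as a finite $S$-module, obtaining $\poinc{P}{I} = \poinc{R}{I}\poinc{S}{k}/D$ and $\poinc{P}{J} = \poinc{S}{J}\poinc{R}{k}/D$, where $D = \poinc{R}{k}+\poinc{S}{k}-\poinc{R}{k}\poinc{S}{k}$. The short exact sequences $0\to I\to R\to R'\to0$ and $0\to J\to S\to S'\to0$, paired with $\Hom_R(-,k)$ and $\Hom_S(-,k)$, then give $\poinc{R}{I}=(\poinc{R}{R'}-1)/z$ and $\poinc{S}{J}=(\poinc{S}{S'}-1)/z$. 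Substituting into the previous display and clearing the common denominator $D$, the leading $1$ combines with the $-1$'s so that the numerator collapses to $\poinc{R}{R'}\poinc{S}{k}+\poinc{S}{S'}\poinc{R}{k}-\poinc{R}{k}\poinc{S}{k}$, as required. The only step that demands genuine attention is the low-degree boundary computation identifying $\Hom_P(R'\times_kS',k)\to\Hom_P(P,k)$ as an isomorphism; once that is settled, the rest is a formal manipulation of the Dress--Kr\"amer formulas.
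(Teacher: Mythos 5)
Your proposal is correct and follows essentially the same route as the paper's proof: identify $\Ker(\varphi\times_k\psi)=I\oplus J$, use the resulting short exact sequence to get $\poinc{R\times_kS}{R'\times_kS'}=1+z(\poinc{R\times_kS}{I}+\poinc{R\times_kS}{J})$, then apply the Dress--Kr\"amer formula \ref{DrKr} and the sequences $0\to I\to R\to R'\to0$, $0\to J\to S\to S'\to0$ to simplify. The only difference is cosmetic: you spell out the long exact sequence in $\Ext$ and the degree-zero isomorphism, which the paper treats as immediate.
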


  \begin{proof}
Set $I=\Ker(\varphi)$ and $J=\Ker(\psi)$.  The first equality below 
holds because one has $\Ker(\varphi\times_k\psi)=I\oplus J$ as ideals; 
the second one comes from \ref{DrKr}:
  \begin{align*}
\poinc{R\times_kS}{R'\times_kS'}
    &=1+z\cdot(\poinc{R\times_kS}{I}+\poinc{R\times_kS}{J})
\\
    &=1+z\cdot\left(\frac{\poinc{R}{I}\poinc{S}{k}}
      {\poinc{R}{k} + \poinc{S}{k} - \poinc{R}{k}\poinc{S}{k}}+
     \frac{\poinc{S}{J}\poinc{R}{k}}
      {\poinc{R}{k} + \poinc{S}{k} - \poinc{R}{k}\poinc{S}{k}}\right)
\\
    &=1+\frac{z}{\poinc{R}{k} + \poinc{S}{k} - \poinc{R}{k}\poinc{S}{k}}\cdot 
      \left(\frac{\poinc{R}{R'}-1}{z}\cdot\poinc{S}{k}+
      \frac{\poinc{S}{S'}-1}{z}\cdot\poinc{R}{k}\right)
\\
   &=\frac{\poinc{R}{R'}\poinc{S}{k}+\poinc{S}{S'}\poinc{R}{k}-\poinc{R}{k}\poinc{S}{k}}
{\poinc{R}{k} + \poinc{S}{k} - \poinc{R}{k}\poinc{S}{k}}.
      \qedhere
  \end{align*}
  \end{proof}

We recall Levin's \cite{Le:Gol} original definition of Golod homomorphism
in terms of Poincar\'e series.  The symbol $\preccurlyeq$ stands for termwise 
inequality of power series.

  \begin{subsec}
   \label{Gol}
Every surjective ring homomorphism $R\to R'$ with $(R,\fr,k)$ local
satisfies
  \[
\poinc{R'}k\preccurlyeq \frac{\poinc{R}k}{1+z-z\poinc{R}{R'}}\,,
  \]
see, for instance, \cite[3.3.2]{Av:barca}.  Equality holds if and only if $R\to R'$ 
is \emph{Golod}.
  \end{subsec}

The following result is due to Lescot \cite[4.1]{Ls}.

  \begin{corollary} 
  \label{cor:FibProdGolHom}
If $\varphi$ and $\psi$ are Golod, then so is $\varphi\times_k\psi$.
    \end{corollary}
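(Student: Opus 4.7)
The plan is to show that the Golod equality of Poincar\'e series characterized in \ref{Gol} is preserved under the fiber product operation, by a purely formal calculation.

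First I would recall the setup: the Golod hypothesis on $\varphi$ and $\psi$ is equivalent, by \ref{Gol}, to the equalities
\[
\poinc{R'}{k}\,(1+z-z\poinc{R}{R'})=\poinc{R}{k}
\quad\text{and}\quad
\poinc{S'}{k}\,(1+z-z\poinc{S}{S'})=\poinc{S}{k}.
\]
The goal is to establish the analogous equality for $\varphi\times_k\psi$, namely
\[
\poinc{R'\times_kS'}{k}\,\bigl(1+z-z\poinc{R\times_kS}{R'\times_kS'}\bigr)=\poinc{R\times_kS}{k}.
\]

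Next I would assemble the three explicit formulas needed. Applying \ref{DrKr} with $M=k$ (and the parallel formula for $R'\times_kS'$) yields
\[
\poinc{R\times_kS}{k}=\frac{\poinc{R}{k}\poinc{S}{k}}{\poinc{R}{k}+\poinc{S}{k}-\poinc{R}{k}\poinc{S}{k}},
\]
and similarly for $\poinc{R'\times_kS'}{k}$ with primed series; Proposition \ref{thm:FibProdGolHom} provides $\poinc{R\times_kS}{R'\times_kS'}$.

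The heart of the proof is then algebraic. Writing $P_R,P_S,P_{R'},P_{S'}$ for the Poincar\'e series of $k$ and $Q_R=\poinc{R}{R'}$, $Q_S=\poinc{S}{S'}$, one computes that the numerator of $(1+z-z\poinc{R\times_kS}{R'\times_kS'})\cdot(P_R+P_S-P_RP_S)$ equals
\[
P_R(1+z-zQ_S)+P_S(1+z-zQ_R)-P_RP_S.
\]
Substituting $P_R=P_{R'}(1+z-zQ_R)$ and $P_S=P_{S'}(1+z-zQ_S)$ from the Golod hypothesis factors this expression as $(1+z-zQ_R)(1+z-zQ_S)(P_{R'}+P_{S'}-P_{R'}P_{S'})$, and dividing $\poinc{R\times_kS}{k}$ by $1+z-z\poinc{R\times_kS}{R'\times_kS'}$ collapses, after cancellation of the factors $(1+z-zQ_R)(1+z-zQ_S)$, to exactly $\poinc{R'\times_kS'}{k}$. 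There is no real obstacle here beyond bookkeeping; the only substantive point is noticing that the Golod factors on the $R$ and $S$ sides appear symmetrically in the combined formula and therefore cancel cleanly. By \ref{Gol}, this equality means $\varphi\times_k\psi$ is Golod.
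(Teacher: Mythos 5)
Your proposal is correct and uses exactly the same ingredients as the paper: the Golod characterization of \ref{Gol}, the Dress--Kr\"amer formula \ref{DrKr}, and Proposition \ref{thm:FibProdGolHom}, followed by a formal verification. The only stylistic difference is that the paper organizes the computation as a chain of equalities among reciprocals $1/\poinc{\,\cdot\,}{k}$, whereas you verify the Golod identity directly after clearing denominators; this is the same calculation arranged differently, not a different route.
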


  \begin{proof}
When the homomorphisms $\varphi$ and $\psi$ are Golod the following
equalities hold:
  \begin{align*}
 \frac{1}{\poinc{R'\times_kS'}{k}}
    & = \frac{1}{\poinc{R'}{k}} + \frac{1}{\poinc{S'}{k}} - 1
\\
   &= \frac{(1+z-z\poinc{R}{R'})}{\poinc{R}{k}}+ \frac{(1+z-z\poinc{S}{S'})}{\poinc{S}{k}} - 1
\\
    & = \frac{(1+z-z\poinc{R}{R'} )\poinc{S}{k} + (1+z-z\poinc{S}{S'})\poinc{R}{k} - 
                  \poinc{R}{k}\poinc{S}{k}}
               {\poinc{R}{k}\poinc{S}{k}}
\\
    & = \frac{(1+z)(\poinc{R}{k}+\poinc{S}{k}-\poinc{R}{k}\poinc{S}{k}) 
   - z(\poinc{R}{k}\poinc{S}{S'} + \poinc{S}{k}\poinc{R}{R'}-\poinc{R}{k}\poinc{S}{k})}{\poinc{R}{k}\poinc{S}{k}}
\\
    & = \frac{(1+z-z\poinc{R\times_kS}{R'\times_kS'})\big(\poinc{R}{k} + \poinc{S}{k} - \poinc{R}{k}\poinc{S}{k}\big)}
                {\poinc{R}{k}\poinc{S}{k}}
\\
   &=\frac{1+z-z\poinc{R\times_kS}{R'\times_kS'}}{\poinc{R\times_kS}{k}}\,.
\end{align*}
The first and last come from \ref{DrKr}, the second from
the definition, the penultimate one from the proposition.  Stringing them
together, we see that $\varphi\times_k\psi$ is Golod.
  \end{proof}

\section{Cohomology of connected sums}
  \label{sec:Cohomology of connected sums}

We compute the cohomology algebra of a connected sum
of local rings over certain Golod homomorphisms, using amalgams of 
graded $k$-algebras.

 \begin{subsec}
   \label{amalgam}
Let $\beta\colon B\gets A\to C\,{:}\,\gamma$ be homomorphisms of
graded $k$-algebras.  

Let $B\sqcup_AC$ denote the quotient of the 
coproduct $B\sqcup C$, see \ref{coproduct}, by the two-sided ideal 
generated by the set $\{\beta(a)-\gamma(a)\mid a\in A\}$.  It 
comes equipped with canonical homomorphisms of graded $k$-algebras 
$\gamma'\colon B\to B\sqcup_AC\gets C\,{:}{\hskip1.5pt}\beta'$, 
satisfying $\gamma'\beta=\beta'\gamma$.  The universal property of
coproducts implies that $B\sqcup_AC$ is an \emph{amalgam} of $\beta$ 
and $\gamma$ in the category of graded $k$-algebras. 

If $B$ and $C$ are free as left graded $A$-modules and as right 
graded $A$-modules, then Lemaire \cite[5.1.5 and 5.1.10]{Lm} shows 
that the maps $\gamma'$ and $\beta'$ are injective, and
  \begin{equation}
    \label{eq:amalgam2}
\frac1{H_{B\sqcup_AC}}=\frac1{H_B}+\frac1{H_C}-\frac1{H_A}\,.
     \end{equation}
  \end{subsec}

  \begin{subsec}
   \label{setup}
Given a connected sum diagram \eqref{diagramSum} with local rings
$(R,\fr,k)$ and $(S,\fs,k)$, $T=k$, and canonical surjection
$\eR$ and $\eS$, set $R'=R/\iR(V)$ and $S'=S/\iS(V)$.

We refine \eqref{diagramSum} to a commutative diagram
 \begin{equation}
\Xi=\qquad  \begin{gathered}
\xymatrixrowsep{2pc}
\xymatrixcolsep{0.9pc}
\xymatrix{
&&& R
\ar@{->>}[rr]_(.3){\varphi} 
\ar@/^2.8pc/[drrrrr]^{\eR}
&& R' 
\ar@/^1pc/[drrr]
 \\
V 
\ar@/^1pc/[urrr]^-{\iR}
\ar@{->}[rr]^-{\iota}
\ar@/_1pc/[drrr]_-{\iS}
&& R\times_kS
\ar@{->>}[ur]^-{\rho}
\ar@{->>}[rr]^-{\kappa}
\ar@{->>}[dr]_-{\sigma}
&& R\#_kS
\ar@{->>}[ur]^-{\rho'}
\ar@{->>}[rr]^-{\varkappa}
\ar@{->>}[dr]_-{\sigma'}
&& R'\times_kS'
\ar@{->>}[rr]
\ar@{->>}[ul]
\ar@{->>}[dl]
&& k
 \\
&&& S
\ar@/_2.8pc/[urrrrr]_{\eS}
\ar@{->>}[rr]^(.3){\psi}
&& S'
\ar@/_1pc/[urrr]
}
  \end{gathered}
\end{equation}
where $\iota(v)=(\iR(v),\iS(v))$ and two-headed arrows denote canonical
surjections.

Proposition \ref{TensorAlgebra} now gives a commutative diagram of
graded $k$-algebras:
 \begin{equation}
   \label{eq:hopfDiag}
 \xymatrixrowsep{2.5pc}
 \xymatrixcolsep{.2pc}
  \begin{gathered}
 \xymatrix{
&&&&& \Ext^*_{R'}(k,k) 
\ar[dr]
\ar[dl]_(.7){\Ext^*_{\rho'}(k,k)}
 \\
\ta(\wt V) 
\ar@/^1.5pc/[urrrrr]^{\iR^*} 
\ar[rrrr]^-{\iota^*} 
\ar@/_1.5pc/[drrrrr]_{\iS^*} 
&&&&\Ext^*_{R\#_kS}(k,k)
&&\Ext^*_{R'\times_kS'}(k,k)
\ar[ll]_{\Ext^*_{\varkappa}(k,k)}
 \\
&&&&& \Ext^*_{S'}(k,k) 
\ar[ur]
\ar[ul]^(.7){\Ext^*_{\sigma'}(k,k)}
 }
  \end{gathered}
   \end{equation}

By \ref{amalgam}, the preceding diagram defines a homomorphism of 
graded $k$-algebras
  \begin{equation}
    \label{eq:induced}
\Xi^*\colon
\Ext^*_{R'}(k,k)\sqcup_{\ta(\wt V)}\Ext^*_{S'}(k,k)
\longrightarrow
\Ext^*_{R\#_kS}(k,k)\,.
   \end{equation}
 \end{subsec}

  \begin{theorem}
 \label{connected}
Assume that $\iR$ and $\iS$ in \emph{\ref{setup}} are injective and non-zero.

If the homomorphism $\varkappa\colon R\#_kS\to R'\times_kS'$ is Golod, in 
particular, if
  \begin{enumerate}[\quad\rm(a)]
 \item
the rings $R$ and $S$ are Gorenstein of length at least $3$, or
 \item
the homomorphisms $\varphi$ and $\psi$ are Golod,
   \end{enumerate}
then $\Xi^*$ in \eqref{eq:induced} is an isomorphism, and the canonical 
maps below are injective:
  \[
\Ext^*_{R'}(k,k)
\xra{\Ext^*_{\rho'}(k,k)}
\Ext^*_{R\#_kS}(k,k)
\xla{\Ext^*_{\sigma'}(k,k)}
\Ext^*_{S'}(k,k)\,.
  \]
  \end{theorem}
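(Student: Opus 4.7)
The plan is to first show that, assuming $\varkappa$ is Golod, the map $\Xi^*$ is an isomorphism and that $\Ext^*_{\rho'}(k,k)$ and $\Ext^*_{\sigma'}(k,k)$ are injective; then to verify that each of (a) and (b) implies $\varkappa$ is Golod. The isomorphism claim would be deduced by combining surjectivity of $\Xi^*$ with equality of Hilbert series.

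For surjectivity, I would use that $\varkappa$ Golod makes $\pi^*(\varkappa)\colon\pi^*(R'\times_kS')\to\pi^*(R\#_kS)$ surjective, by \ref{ex:Gol}.  The coproduct description \ref{coproduct cohomology}, together with the Poincar\'e--Birkhoff--Witt theorem identifying $\Ext^*$ with the universal enveloping algebra of $\pi^*$, gives that $\pi^*(R'\times_kS')$ is the coproduct of $\pi^*(R')$ and $\pi^*(S')$ in graded Lie algebras.  Hence $\pi^*(R\#_kS)$ is generated by the images of $\pi^*(\rho')$ and $\pi^*(\sigma')$; since $\Ext^*_{R\#_kS}(k,k)$ equals $U(\pi^*(R\#_kS))$ by \ref{pi}, it is generated as an algebra by $\Ima(\Ext^*_{\rho'}(k,k))+\Ima(\Ext^*_{\sigma'}(k,k))$, which is contained in $\Ima(\Xi^*)$.

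For the Hilbert series, I would apply Proposition \ref{TensorAlgebra} in parallel to the three exact sequences $0\to V\to R\to R'\to 0$, $0\to V\to S\to S'\to 0$, and $0\to V\to R\times_kS\to R\#_kS\to 0$ from setup \ref{setup}, concluding that $\iR^*$, $\iS^*$, and $\iota^*$ are injective and that $\Ext^*_{R'}(k,k)$, $\Ext^*_{S'}(k,k)$, and $\Ext^*_{R\#_kS}(k,k)$ are free on each side as $\ta(\wt V)$-modules.  Formula \eqref{eq:amalgam2} then computes the Hilbert series of $\Ext^*_{R'}(k,k)\sqcup_{\ta(\wt V)}\Ext^*_{S'}(k,k)$, and the injectivity clause of \ref{amalgam} supplies the asserted injectivity of $\Ext^*_{\rho'}(k,k)$ and $\Ext^*_{\sigma'}(k,k)$, after identifying them with the canonical amalgam maps via the commutativity of \eqref{eq:hopfDiag}.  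On the other side, \ref{Gol} applied to $\varkappa$, combined with Proposition \ref{thm:FibProdGolHom} and \ref{DrKr}, produces a rational expression for $\poinc{R\#_kS}{k}$.  A direct manipulation reconciles the two Hilbert series, and together with surjectivity this promotes $\Xi^*$ to an isomorphism.

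The main obstacle is verifying the hypotheses of Proposition \ref{TensorAlgebra} for each of the three sequences.  Under (a), $V=k$ is cyclic, and a brief Loewy-length argument shows that a Gorenstein artinian local ring of length at least $3$ has its socle contained in the square of its maximal ideal; hence $\iR(V)\subseteq\fr^2$, $\iS(V)\subseteq\fs^2$, and $\iota(V)\subseteq\fp^2$, so the cyclic clause of Proposition \ref{TensorAlgebra} applies to each sequence.  Under (b), the Golod clause applies directly to the first two sequences; Corollary \ref{cor:FibProdGolHom} gives $\varphi\times_k\psi=\varkappa\kappa$ Golod, and a Poincar\'e-series cancellation via \ref{Gol} along this factorization yields that $\kappa$ (and hence $\varkappa$) is Golod as well.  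The hardest point is ensuring that the freeness of $\Ext^*_{R'}(k,k)$ and $\Ext^*_{S'}(k,k)$ over $\ta(\wt V)$ survives under the bare hypothesis that $\varkappa$ is Golod; this appears to require either subsidiary Golod-type arguments for $\varphi$ and $\psi$ extracted from the coproduct structure of $\Ext^*_{R'\times_kS'}(k,k)$, or a more delicate direct computation bypassing the two row sequences entirely.
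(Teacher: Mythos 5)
Your overall architecture matches the paper's: establish surjectivity of $\Xi^*$ from Golodness of $\varkappa$, compute Hilbert series on both sides via Proposition \ref{TensorAlgebra}, formula \eqref{eq:amalgam2}, and the Dress--Kr\"amer formula \ref{DrKr}, then conclude by comparison; and derive the injectivity of $\Ext^*_{\rho'}(k,k)$, $\Ext^*_{\sigma'}(k,k)$ from Lemaire's injectivity clause in \ref{amalgam}. Your Lie-algebra route to surjectivity is a minor detour where the paper simply notes that $\Ext^*_{\varkappa}(k,k)$ is surjective and composes with the bijection $\xi^*$, but it arrives at the same place. Case (b) is handled essentially as in the paper (Corollary \ref{cor:FibProdGolHom} plus the factorization $\varphi\times_k\psi=\varkappa\kappa$ and Lemma \ref{socSeqLemma}).

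There are two genuine defects. First, and most seriously, you do not actually verify that condition (a) makes $\varkappa$ Golod. You announce that plan at the outset, but what you then prove under (a) is only that the three sequences satisfy the cyclic clause of Proposition \ref{TensorAlgebra}; this feeds the Hilbert-series side of the argument but does not establish the main hypothesis. The paper's argument here is specific and essential: under (a), Theorem \ref{gorenstein} shows $Q=R\#_kS$ is artinian Gorenstein; since $\soc R\subseteq\fr^2$ and $\soc S\subseteq\fs^2$ one gets $\soc P\subseteq\fp^2$, hence $\edim Q=\edim P\ge 2$ by \eqref{eq:local3.4}; and $\Ker(\varkappa)$ is nonzero and contained in $\soc Q$, so $\varkappa$ is Golod by \ref{GolSocle} (Levin--Avramov). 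Without this, both the surjectivity of $\Xi^*$ and the computation of $1/\poinc{Q}{k}$ are unsupported in case (a).

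Second, a smaller point: to get $1/\poinc{Q}{k}=1/\poinc{P'}{k}+rz^2$ you should invoke \ref{specialGol} (the Golod criterion for a surjection whose kernel is killed by the maximal ideal, applied to $\varkappa$, noting $\fq\Ker(\varkappa)=0$), not Proposition \ref{thm:FibProdGolHom}; the latter computes $\poinc{R\times_kS}{R'\times_kS'}$, which is not the quantity needed. Your concluding worry about whether freeness of $\Ext^*_{R'}(k,k)$ and $\Ext^*_{S'}(k,k)$ over $\ta(\wt V)$ holds under the bare hypothesis ``$\varkappa$ Golod'' is a fair observation --- the paper's own proof cites Proposition \ref{TensorAlgebra} for $\iR^*,\iS^*,\iota^*$ without explicitly checking its hypotheses in that generality, and only verifies them under (a) and (b) --- but acknowledging an obstacle is not the same as resolving it, and your proof as written does not close it.
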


  \begin{corollary}
 \label{cor:connected}
When $\varkappa$ is Golod, for every $R'$-module $N$ one has
  \[
\poinc{R\#_kS}N=
\poinc{R'}N\cdot\frac{\poinc{S'}k}
{\poinc{R'}k+\poinc{S'}k-(1-rz^2)\cdot \poinc{R'}k \poinc{S'}k}\,,
  \]
where $r=\rank_kV$ (and thus, $r=1$ under condition \emph{(a)}).
Formulas for Poincar\'e series of $S'$-modules are obtained by 
interchanging $R'$ and $S'$.
   \end{corollary}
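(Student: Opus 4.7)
The plan is to reduce the computation to the case $N = k$, which is handled by the amalgam Hilbert series formula from Theorem \ref{connected}, and then to extend to a general $R'$-module using Dress--Krämer together with the Golod hypothesis on $\varkappa$.

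First, by Theorem \ref{connected} the map $\Xi^*$ is an isomorphism of graded $k$-algebras, and Proposition \ref{TensorAlgebra} guarantees that both $\Ext^*_{R'}(k,k)$ and $\Ext^*_{S'}(k,k)$ are free on each side over $\ta(\wt V)$. Hence Lemaire's identity \eqref{eq:amalgam2} applies; since the Hilbert series of the tensor algebra on a degree-$2$ vector space of rank $r$ equals $1/(1-rz^2)$, one obtains
\[
\frac{1}{\poinc{R\#_kS}{k}} = \frac{1}{\poinc{R'}{k}} + \frac{1}{\poinc{S'}{k}} - (1-rz^2),
\]
and solving for $\poinc{R\#_kS}{k}$ settles the claim in the special case $N = k$. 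Under condition (a) one has $T=k$, whose canonical module is $k$ itself, so $V\cong k$ and $r=1$.

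For a general $R'$-module $N$ I would couple this with two further inputs. From \ref{DrKr} applied to $N$ viewed as an $R'$-module over the fiber product $R'\times_kS'$,
\[
\poinc{R'\times_kS'}{N} = \poinc{R'}{N}\cdot\frac{\poinc{S'}{k}}{\poinc{R'}{k}+\poinc{S'}{k}-\poinc{R'}{k}\poinc{S'}{k}}.
\]
The Golod hypothesis on $\varkappa\colon R\#_kS\to R'\times_kS'$, beyond providing Levin's residue-field identity \ref{Gol}, should extend to any target module to yield $\poinc{R\#_kS}{N} = \poinc{R'\times_kS'}{N}\cdot \poinc{R\#_kS}{k}/\poinc{R'\times_kS'}{k}$. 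Specialising Dress--Krämer at $N=k$ gives the denominator $\poinc{R'\times_kS'}{k}$ explicitly; cancelling it against the numerator of the displayed Dress--Krämer formula reduces everything to $\poinc{R\#_kS}{N} = \poinc{R'}{N}\cdot\poinc{R\#_kS}{k}/\poinc{R'}{k}$, and substituting the first-step expression for $\poinc{R\#_kS}{k}$ delivers the formula in the statement.

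The main obstacle is the relative identity in the previous paragraph, namely extending Levin's Golod formula from $k$ to an arbitrary $R'\times_kS'$-module. A clean alternative is to bypass $R'\times_kS'$ entirely: Theorem \ref{connected} together with \ref{amalgam} makes $\Ext^*_{R\#_kS}(k,k)$ free as a right module over $\Ext^*_{R'}(k,k)$ via the injection $\Ext^*_{\rho'}(k,k)$, and a change-of-rings spectral sequence for the small surjection $\rho'\colon R\#_kS\to R'$ degenerates to produce $\Ext^*_{R\#_kS}(N,k)\cong \Ext^*_{R'}(N,k)\otimes_{\Ext^*_{R'}(k,k)}\Ext^*_{R\#_kS}(k,k)$ as graded vector spaces, from which $\poinc{R\#_kS}{N}=\poinc{R'}{N}\cdot\poinc{R\#_kS}{k}/\poinc{R'}{k}$ follows at once, reducing the whole proof to Step 1.
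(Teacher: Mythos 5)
Your final strategy is the right one and, at the level of ideas, it matches the paper's proof: reduce to $N=k$ (where the amalgam Hilbert-series identity \eqref{eq:amalgam2} gives $1/\poinc{Q}k = 1/\poinc{R'}k + 1/\poinc{S'}k - (1-rz^2)$) and then transfer to an arbitrary $R'$-module $N$ by exploiting the injectivity of $\Ext^*_{\rho'}(k,k)$. The difference is that the paper simply cites Levin's theorem on \emph{large} homomorphisms \cite[1.1]{Le:large}: if $\rho'\colon Q\to R'$ is a surjection for which $\Ext^*_{\rho'}(k,k)$ is injective, then $\poinc{Q}N = \poinc{R'}N\cdot\poinc{Q}k/\poinc{R'}k$ for every $R'$-module $N$. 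Your ``clean alternative'' is, in effect, a sketch of a proof of that theorem.

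Two minor quibbles with the details. First, the surjection $\rho'$ is \emph{large}, not small, in the standard terminology (small would mean $\Ext^*_{\rho'}(k,k)$ surjective, which here is false); so ``small surjection'' is a misnomer. Second, the asserted degeneration of a change-of-rings spectral sequence to produce the isomorphism $\Ext^*_{Q}(N,k)\cong\Ext^*_{R'}(N,k)\otimes_{\Ext^*_{R'}(k,k)}\Ext^*_{Q}(k,k)$ is not the usual Cartan--Eilenberg change-of-rings spectral sequence; Levin's proof instead goes through the comodule structure of $\Tor$ over the coalgebra $\Tor^{R'}_*(k,k)$. The conclusion is correct, but as written this step is a gesture rather than an argument, and the efficient route is to cite Levin directly. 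Your initial detour through Dress--Kr\"amer and the Golod hypothesis on $\varkappa$ is workable (Golod homomorphisms are large, so the module-level transfer along $\varkappa$ does hold), but it is longer than needed; you were right to abandon it in favor of working directly with $\rho'$.
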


In preparation for the proofs, we review a few items.

  \begin{subsec}
   \label{specialGol}
When $(P,\fp,k)$ is a local ring and $\kappa\colon P\to Q$ a surjective 
homomorphism with $\fp\Ker(\kappa)=0$, the following inequality holds, 
with equality if and only if $\kappa$ is Golod:
  \begin{equation*}
\poinc{Q}k\preccurlyeq
\frac{\poinc{P}k}{1-\rank_k(\Ker(\kappa))\cdot z^2\cdot \poinc{P}{k}}\,.
  \end{equation*}

Indeed, the short exact sequence of $P$-modules $0\to\Ker(\kappa)\to P\to Q\to0$ 
yields $\poinc{P}{Q} = 1 + \rank_k(\Ker(\kappa))\cdot z\cdot\poinc{P}{k}$,
so the assertion follows from \ref{Gol}.
  \end{subsec}

The Golod property may be lost under composition or decomposition, but:

\begin{lemma} 
  \label{socSeqLemma}
Let $P\xra{\kappa}Q\xra{\varkappa}P'$ be surjective homomorphisms of rings.

When $\fp\Ker(\varkappa\kappa)=0$ holds, the map
$\varkappa\kappa$ is Golod if and only if $\varkappa$ and $\kappa$ are.
  \end{lemma}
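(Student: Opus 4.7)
The plan is to reduce everything to the Poincar\'e-series characterization of Golod homomorphisms supplied by \ref{specialGol}. The first step is to verify that this characterization applies to each of the three maps. Since $\Ker(\kappa)\subseteq\Ker(\varkappa\kappa)$, the hypothesis immediately gives $\fp\Ker(\kappa)=0$. Surjectivity of $\kappa$ yields $\fq=\kappa(\fp)$ and $\Ker(\varkappa)=\kappa(\Ker(\varkappa\kappa))$, whence $\fq\Ker(\varkappa)=\kappa(\fp\Ker(\varkappa\kappa))=0$ as well. Moreover, the short exact sequence $0\to\Ker(\kappa)\to\Ker(\varkappa\kappa)\to\Ker(\varkappa)\to0$ consists of $k$-vector spaces, and setting $i=\rank_k\Ker(\kappa)$, $j=\rank_k\Ker(\varkappa)$, and $\ell=\rank_k\Ker(\varkappa\kappa)$, we get $\ell=i+j$.

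Now write $R=\poinc{P}{k}$, $S=\poinc{Q}{k}$, $T=\poinc{P'}{k}$, and define $f_n(x)=x/(1-nz^2x)$ for $n\in\bbN$. The three applications of \ref{specialGol} become
\[
S\preccurlyeq f_i(R),\qquad T\preccurlyeq f_j(S),\qquad T\preccurlyeq f_\ell(R),
\]
with equality in the $n$th relation amounting to the corresponding Golod property. An elementary manipulation in $\bbZ[\![z]\!]$ gives the identity $f_j\circ f_i=f_{i+j}=f_\ell$. Since $(1-jz^2x)^{-1}=\sum_{n\geq0}(jz^2x)^n$ has non-negative coefficients, the operator $f_j$ is both monotone with respect to $\preccurlyeq$ and injective on power series with constant term $1$. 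Consequently, chaining the first two bounds through $f_j$ yields $T\preccurlyeq f_j(S)\preccurlyeq f_j(f_i(R))=f_\ell(R)$.

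Both directions of the equivalence then drop out of this chain. If $\kappa$ and $\varkappa$ are Golod, the first two inequalities above are equalities, and the composition gives $T=f_\ell(R)$, making $\varkappa\kappa$ Golod. Conversely, if $\varkappa\kappa$ is Golod, then $T=f_\ell(R)$ forces equality throughout the chain: $T=f_j(S)$ makes $\varkappa$ Golod, while $f_j(S)=f_j(f_i(R))$ combined with injectivity of $f_j$ yields $S=f_i(R)$, so $\kappa$ is Golod. The main delicacy is the careful handling of formal-power-series inequalities, but once monotonicity and injectivity of $f_j$ are secured through the non-negativity of the coefficients of $(1-jz^2x)^{-1}$, the result follows cleanly from the algebraic identity $f_j\circ f_i=f_\ell$.
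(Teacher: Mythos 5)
Your argument is correct and follows essentially the same route as the paper: apply \ref{specialGol} to each of $\kappa$, $\varkappa$, and $\varkappa\kappa$, observe that the ranks of the kernels add, and chain the two bounds together using the identity $f_j\circ f_i=f_{i+j}$ (which is exactly the displayed computation in the paper's proof). Your version is somewhat more careful in that it explicitly verifies the hypotheses $\fp\Ker(\kappa)=0$ and $\fq\Ker(\varkappa)=0$ needed to invoke \ref{specialGol}, and it isolates the monotonicity and injectivity of $f_j$ that the paper uses silently, but the underlying argument is the same.
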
 

\begin{proof}
Set $\rank_k\Ker(\kappa) = r$ and $\rank_k\Ker(\varkappa) = r'$.  {From}
\ref{specialGol} one gets
\begin{align*}
\poinc{P'}{k} \preccurlyeq \frac{\poinc{Q}{k}}{1-r'z^2\cdot\poinc{Q}{k}} 
\preccurlyeq 
\frac{{\displaystyle\frac{\poinc{P}{k}}{1 - rz^2\cdot\poinc{P}{k}}}}
{1 - r'z^2\cdot{\displaystyle\frac{\poinc{P}{k}}{1-rz^2\cdot\poinc{P}{k}}}}
= \frac{\poinc{P}{k}}{1 - (r+r')z^2\cdot\poinc{P}{k}}\,.
\end{align*}
One has $\rank_k\Ker(\varkappa\kappa) = r+r'$, so the desired assertion 
follows from \ref{specialGol}.
  \end{proof}

\begin{subsec}
  \label{GolSocle} 
When $(Q,\fq,k)$ is an artinian Gorenstein ring with $\edim Q\ge2$,
the canonical surjection $Q\to Q/\soc Q$ is a Golod homomorphism; see 
\cite[Theorem 2]{LA}.
 \end{subsec}

\stepcounter{theorem}

  \begin{proof}[Proof of Theorem \emph{\ref{connected}}]
For $Q=R\#_kS$ and $P'=R'\times_kS'$, we have a commutative diagram,
with $\theta$ the canonical surjection, see \ref{amalgam}, and $\xi^*$ the 
bijection from \ref{coproduct cohomology}:
  \[
 \xymatrixrowsep{2pc}
 \xymatrixcolsep{4pc}
 \xymatrix{
\Ext^*_{R'}(k,k)\sqcup_{\ta(\wt V)}\Ext^*_{S'}(k,k)
\ar[r]^-{\Xi^*}
&\Ext^*_{Q}(k,k)
\\
\Ext^*_{R'}(k,k)\sqcup\Ext^*_{S'}(k,k)
\ar[r]^-{\cong}_-{\xi^*}\ar[u]^-{\theta}
&\Ext^*_{P'}(k,k)
\ar[u]_-{\Ext^*_{\varkappa}(k,k)}
 }
  \]
The map $\Ext^*_{\varkappa}(k,k)$ is surjective because $\varkappa$ is 
Golod, see \ref{ex:Gol}, so $\Xi^*$ is surjective.

Set $D=\Ext^*_{R'}(k,k)\sqcup_{\ta(\wt V)}\Ext^*_{R'}(k,k)$.  By Proposition 
\ref{TensorAlgebra}, $\iR^*$, $\iota^*$, and $\iS^*$ turn their targets into 
free graded $\ta(\wt V)$-modules, left and right, so \eqref{eq:amalgam2} gives:
  \[											
\frac1{H_D}
=\frac1{\poinc{R'}k}+\frac1{\poinc{S'}k}-\frac1{H_{\ta(\wt V)}}
=\frac1{\poinc{R'}k}+\frac1{\poinc{S'}k}-(1-rz^2)\,.
  \]
On the other hand, from \ref{specialGol} and \ref{DrKr} we obtain
    \begin{equation}
      \label{seriesQ}
\frac1{\poinc{Q}k}
  =\frac1{\poinc{P'}k}+rz^2
  =\left(\frac1{\poinc{R'}k}+\frac1{\poinc{S'}k}-1\right)+rz^2\,.
   \end{equation}
Thus, one has $H_D=\poinc{Q}k$.  This implies that the surjection $\Xi^*$ 
is an isomorphism.  

The injectivity of $\Ext^*_{\rho'}(k,k)$ and $\Ext^*_{\sigma'}(k,k)$ now results
from Proposition \ref{TensorAlgebra}.

It remains to show that condition (a) or (b) implies that $\varkappa$ is Golod.

(a)  Let $R$ and $S$ be artinian Gorenstein of length at least $3$.  
The socle of $R$ is equal to the maximal non-zero power of $\fr$, and
$\fr^2=0$ would imply $\length(R)=2$, so we have $\soc R\subseteq\fr^2$.
By symmetry, we also have $\soc S\subseteq\fs^2$.  

Set $P=R\times_kS$.  By definition, $Q$ equals $P/pP$, where $p$  is a
non-zero element in $\soc P$.  The maximal ideal $\fp$ of $P$ is equal to 
$\fr\oplus\fs$, so $\soc P=\soc R\oplus\soc S$ is in $\fr^2\oplus\fs^2$.  This 
gives the equality below, and \eqref{eq:local3.4} the first inequality:
  \[
\edim Q=\edim P\ge\edim R+\edim S -\edim k\ge2\,.
  \] 
Since the ring $Q$ is artinian Gorenstein by Theorem \ref{gorenstein}, and 
the kernel of the map $Q\to P'$ is non-zero and is in $\soc Q$, this homomorphism 
is a Golod by \ref{GolSocle}.

(b) If $\varphi$ and $\psi$ are Golod, then so is $\varphi\times_k\psi$  by 
Corollary~\ref{cor:FibProdGolHom}.  {From} the equality $\varphi\times_k\psi=
\varkappa\kappa$ and Lemma \ref{socSeqLemma}, one concludes that 
$\varkappa$ is Golod.  
  \end{proof}

  \begin{proof}[Proof of Corollary \emph{\ref{cor:connected}}]
As $\Ext_{\rho'}(k,k)$ is injective, the first equality in the string
  \[
{\poinc{Q}N}={\poinc{R'}N}\cdot\frac{\poinc{Q}k}{\poinc{R'}k}
={\poinc{R'}N}\cdot\frac{\poinc{S'}k}{\poinc{R'}k+
\poinc{S'}k-(1-rz^2)\poinc{R'}k\poinc{S'}k}
  \]
follows from a result of Levin; see \cite[1.1]{Le:large}.
The second one comes from \eqref{seriesQ}.
   \end{proof}

  \section{Indecomposable Gorenstein rings}
    \label{sec:ciSum}

In this section we approach the problem of identifying Gorenstein
rings that cannot be decomposed in a non-trivial way as a connected sum 
of Gorenstein local rings.  Specifically, we prove that complete 
intersection rings have no such decomposition over regular rings, 
except in a single, well understood special case.

Recall that, by Cohen's Structure Theorem, the $\fr$-adic completion 
$\widehat R$ of a local ring$(R,\fr,k)$ is isomorphic to $\wt R/K$, with 
$(\wt R,\wt\fr,k)$ regular local and $K\subseteq{\wt\fr}^2$.  One says 
that $R$ is \emph{complete intersection} (\emph{of codimension $c$}) 
if $K$ can be generated by a $\wt R$-regular sequence (of length $c$).  
A \emph{hypersurface} ring is a complete intersection ring of codimension 
$1$; it is \emph{quadratic} in case $K$ is generated by an element in 
${\wt\fr}^2\smallsetminus{\wt\fr}^3$.

We also need homological characterizations of complete intersection rings: 

  \begin{subsec}
 \label{ciLie}
A local ring $(R,\fr,k)$ is complete intersection if and only if 
$\pi^3(R)=0$, if and only if $\poinc Rk(z)=(1+z)^b/(1-z)^c$ 
with $b,c\in\bbZ$, see \cite[3.5.1]{GL}.  

If $R$ is complete intersection, then $\codim R=\rank_k\pi^2(R)=c$; 
see \cite[3.4.3]{GL}.
  \end{subsec}

Now we return to the setup and notation of Section \ref{sec:ConnSum}, 
which we recall:

  \begin{subsec}
 \label{setupSum2}
The rings in the diagram \eqref{diagramSum} are local: $(R,\fr,k)$, $(S,\fs,k)$
and $(T,\ft,k)$.

The maps $\eR$ and $\eS$ are surjective; set $I=\Ker(\eR)$ and $J=\Ker(\eS)$.

The maps $\iR$ and $\iS$ are injective.
  \end{subsec}

  \begin{theorem}
 \label{regular}
When $R$ and $S$ are Gorenstein of dimension $d$,  $T$ is regular of 
dimension $d$, and $\iR(V)=(0:I)$ and $\iS(V)=(0:J)$, the ring $R\#_TS$ is 
a local complete intersection if and only if one of the following 
conditions holds:
  \begin{enumerate}[\quad\rm(a)]
    \item
$R$ is a quadratic hypersurface ring and $S$ is a complete intersection ring.

In this case, $R\#_TS\cong S$.
    \item
$S$ is a quadratic hypersurface ring and $R$ is a complete intersection ring.

In this case, $R\#_TS\cong R$.
    \item
$R$ and $S$ are non-quadratic hypersurface rings.

In this case, $\codim(R\#_TS)=2$.
  \end{enumerate}
    \end{theorem}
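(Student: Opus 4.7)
The plan is to reduce to the artinian case via Lemma \ref{regularSum}, then settle the question through a Poincar\'e-series computation that collapses to a short polynomial identity. Since $T$ is regular of dimension $d$, any regular system of parameters $\bx$ for $T$ is a regular sequence on $R/\iR(V)=R/(0:I)$ and $S/\iS(V)=S/(0:J)$ (both maximal Cohen--Macaulay over $R,S$ by \cite[1.3]{PS}), on $V$, and on $T$, so Lemma \ref{regularSum} transfers the problem to the reduction modulo $\bx$. Since ``complete intersection'' and ``quadratic hypersurface'' (via multiplicity, see \ref{lem:multiplicity}) are preserved and reflected under such quotients, we may assume $T=k$, with $R,S$ artinian Gorenstein, $V\cong k$, $\iR(V)=\soc R$, and $\iS(V)=\soc S$.

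If $\length R\le 2$ then $R=k$ is ruled out by $I\ne 0$, so $R\cong k[x]/(x^2)$ is a quadratic hypersurface and Proposition \ref{multiplicity} gives $R\#_TS\cong S$; consequently $R\#_TS$ is complete intersection iff $S$ is, producing case (a), and symmetrically $\length S\le 2$ produces case (b). Assume henceforth $\length R,\length S\ge 3$, so Theorem \ref{connected}(a) applies. Writing $Q=R\#_kS$, $R'=R/\soc R$, $S'=S/\soc S$, that theorem supplies the injectivity of $\Ext^*_{\rho'}(k,k)\colon\Ext^*_{R'}(k,k)\to\Ext^*_Q(k,k)$, and its proof yields the identity
\[
\frac{1}{\poinc{Q}{k}} \;=\; \frac{1}{\poinc{R'}{k}}+\frac{1}{\poinc{S'}{k}}-1+z^2.
\]

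Now suppose $Q$ is complete intersection. By \ref{ciLie} this is equivalent to $\pi^3(Q)=0$; the universal enveloping algebra identification in \ref{pi}, combined with the injectivity of $\Ext^*_{\rho'}(k,k)$, yields an injection $\pi^*(R')\hookrightarrow\pi^*(Q)$, forcing $\pi^3(R')=0$, so $R'$ is complete intersection (and symmetrically $S'$ is as well). Since $\length R\ge 3$ implies $\soc R\subseteq\fr^2$, we have $\edim R'=\edim R=:e$, and $R'$ artinian c.i.\ gives $1/\poinc{R'}{k}=(1-z)^e$; analogously $1/\poinc{S'}{k}=(1-z)^f$ with $f=\edim S$. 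The inclusions $\soc R\subseteq\fr^2$ and $\soc S\subseteq\fs^2$ place the generator of $\Ker\kappa$ in $\fp^2$, and since $\fp^2=\fr^2\oplus\fs^2$ in the artinian case with $T=k$, we get $\edim Q=\edim(R\times_kS)=e+f$; thus $Q$ artinian c.i.\ gives $1/\poinc{Q}{k}=(1-z)^{e+f}$. Substituting and setting $u=1-z$ reduces the identity to
\[
u^{e+f}=u^e+u^f+u^2-2u.
\]
Since $e,f\ge 1$, the left side has degree $e+f>\max(e,f)$, so comparing leading coefficients forces $e+f\le 2$, hence $e=f=1$, and then the identity reads $u^2=u^2$. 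Thus $R$ and $S$ are artinian Gorenstein of embedding dimension $1$ and length $\ge 3$, i.e., non-quadratic hypersurfaces (in the original setting, via multiplicity), which is case (c); the same Poincar\'e computation gives $\poinc{Q}{k}=1/(1-z)^2$, whence $\codim Q=2$. The main obstacle is the passage from ``$Q$ c.i.'' to ``$R'$ c.i.'', which hinges on the injectivity supplied by Theorem \ref{connected} together with the homotopy-Lie-algebra characterization in \ref{ciLie}; once that is in hand, the polynomial identity in $u$ pins down $e=f=1$ essentially by inspection.
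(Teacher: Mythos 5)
Your reduction to the artinian case has a genuine gap. You claim that ``any regular system of parameters $\bx$ for $T$ is a regular sequence on $R/\iR(V)=R/(0:I)$ and $S/\iS(V)=S/(0:J)$, on $V$, and on $T$,'' but this is false. Take $R=k[[x,y]]/(xy)$, $T=R/(y)\cong k[[x]]$, so $I=(y)$ and $(0:I)=(x)$; then $R/(0:I)\cong k[[y]]$, and the system of parameters $x$ for $T$, lifted to $R$, acts as zero on $R/(0:I)$, hence is not regular on it. The paper sidesteps this by invoking \ref{lem:multiplicity} to choose a sequence $\bx$ in $P=R\times_TS$ regular on $P$ and $T$ with $\length(T/\bx T)=e(T)=1$, and then observing that any such $\bx$ is automatically regular on the remaining $d$-dimensional Cohen--Macaulay $P$-modules. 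Moreover, once $\bx$ is chosen you also need it to be part of a \emph{minimal} generating set of $\fq$ (the paper gets this from $T/\bx T=k$ and the surjection $\fq/\fq^2\to\ft/\ft^2$); without that, the passage from ``$Q/\bx Q$ is c.i.\ of codimension $c$'' back to ``$Q$ is c.i.\ of codimension $c$,'' as well as the preservation and reflection of ``(non-)quadratic hypersurface,'' are not justified. You wave at ``preserved and reflected under such quotients'' but none of this is checked.

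That said, the core artinian argument you give for the ``only if'' direction is correct and is a genuinely different route than the paper's. Both proofs use the injectivity of $\Ext^*_{\rho'}(k,k)$ from Theorem \ref{connected} to get $\pi^3(R')\hookrightarrow\pi^3(Q)=0$; but where you conclude $R'$ (and $S'$) are complete intersections and then pin down $\edim R=\edim S=1$ by substituting $\poinc{R'}k=1/(1-z)^e$, $\poinc{S'}k=1/(1-z)^f$, $\poinc Qk=1/(1-z)^{e+f}$ into the Poincar\'e-series identity \eqref{seriesQ} and comparing polynomial degrees, the paper instead combines the Golod property of $R\to R'$ (via \ref{ex:Gol}) with $\pi^3(R')=0$ to force $\Ext^1_R(R',k)=0$, hence $R'$ free over $R$, contradicting $\soc(R)\cdot R'=0$ when $\edim R\ge2$. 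Your polynomial-identity argument is arguably more transparent and buys a uniform numerical computation for all of cases (a)--(c); the paper's freeness argument avoids computing explicit Poincar\'e series in the ``only if'' direction. Once the reduction step is repaired along the lines of the paper (choose $\bx$ via \ref{lem:multiplicity} and verify it lies in $\fq\smallsetminus\fq^2$), your proof goes through.
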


  \begin{proof}
Let $(P,\fp,k)$ denote the local ring $R\times_TS$, see Lemma \ref{local1},
and $Q=R\#_TS$.

If $e(R)=1$ or $e(S)=1$, then $R\#_TS=0$ by Proposition \ref{multiplicity} and 
Theorem \ref{gorenstein}.   Else, the ring $Q$ is local, see \ref{trivial};
let $\fq$ denote its maximal ideal.

If $e(R)=2$, then $Q\cong S$ by Proposition 
\ref{multiplicity}, so $Q$ and $S$ are complete intersection simultaneously.  
The case $e(S)=2$ is similar, so we assume $e(R)\ge3$ and $e(S)\ge3$.

The $P$-modules $P$, $Q$, $R$, $S$, and $T$ are Cohen-Macaulay
of dimension $d$; see Proposition \ref{cmProd} and Theorem \ref{gorenstein}.  
Tensoring the diagram \eqref{diagramSum} with $P[y]_{\fp[y]}$ over $P$, 
we may assume that $k$ is infinite. Choose a sequence $\bx$ in $P$ that is 
regular on $P$ and $T$ and satisfies $\length(T/\bx T)=e(T)$; see \ref{lem:multiplicity}.
Since $T$ is a regular ring, we have $e(T)=1$, hence $T/\bx T=k$, so the image of $\bx$ in $T$ 
is a minimal set of generators of $\ft$.  The surjective homomorphism $Q\to T$ 
induces a surjective $k$-linear map $\fq/\fq^2\to\ft/\ft^2$, so the image of $\bx$ 
in $Q$ extends to a minimal generating set of $\fq$. 

Since $\bx$ is a system of parameters for $P$, and $Q$, $R$, and $S$ are
$d$-dimensional Cohen-Macaulay $P$-modules, $\bx$ is also a system of 
parameters for each one of them.  Thus, $\bx$ is a regular sequence 
on $Q$, $R$, and $S$.  Since $\bx$ is part of a minimal set of generators 
of $\fq$, the ring $Q$ is complete intersection of codimension $c$ if and 
only if so is $Q/\bx Q$.  Also, $R$ and $S$ are Gorenstein if and only so 
are $R/\bx R$ and $S/\bx S$, and they satisfy $\length(R)\ge e(R)\ge3$ 
and $\length(S)\ge e(S)\ge3$; see \ref{lem:multiplicity}.  Lemma \ref{regularSum} 
gives an isomorphism of rings $Q/\bx Q\cong (R/\bx R)\#_k(S/\bx S)$.
Thus, after changing notation, for the rest of the proof we may assume 
$Q=R\#_kS$, where $R$ and $S$ are artinian Gorenstein rings that are 
not quadratic 
hypersurfaces.

Let $Q$ be complete intersection and assume $\edim R\ge 2$.  
Set $R'=R/\soc R$.  Theorem \ref{connected} shows that the homomorphism
$Q\to R'$ induces an injective homomorphism of cohomology
algebras, and hence one of homotopy Lie algebras; see \ref{pi}.  This
gives the second inequality in the following string, where the first
inequality comes from \ref{ex:Gol} (because $R\to R'$ is Golod by
\ref{GolSocle}), and the equality from \ref{ciLie}:
  \[
\rank_k\Ext^1_R(R',k)\le\rank_k\pi^3(R')\le\rank_k\pi^3(Q)=0\,.
  \]
It follows that $R'$ is free as an $R$-module.  On the other hand, it is 
annihilated by $\soc R$, and this ideal is non-zero because $R$ is artinian.  
This contradiction implies $\edim R=1$, so $R$ is a
hypersurface ring.  By symmetry, so is $S$.

Conversely, if $R$ and $S$ are hypersurface rings, then Corollary 
\ref{cor:connected} gives
  \[
\poinc{Q}k
=\frac1{1-z}\cdot\frac{\displaystyle\frac1{1-z}}
{\displaystyle\frac1{1-z}+\frac1{1-z}-(1-z^2)\cdot\frac1{1-z}\cdot\frac1{1-z}}
=\frac1{(1-z)^2}\,.
  \]
This implies that $Q$ is a complete intersection ring of codimension $2$; 
see \ref{ciLie}.
  \end{proof}

  \section*{Acknowledgement}
We thank Craig Huneke for several useful discussions.

\end{document}